\documentclass{scrartcl}

\usepackage[english]{babel}
\usepackage{amsmath}
\usepackage{amssymb}
\usepackage{mathtools}
\usepackage{hyperref}
\usepackage{cleveref}
\usepackage{nicefrac}       
\usepackage{hhline}
\usepackage{multirow}

\usepackage[symbol]{footmisc}

\newcommand{\E}{\mathbb{E}}
\DeclarePairedDelimiterX{\expdelim}[1]{[}{]}{#1} 
\newcommand{\Exp}[1]{\E\expdelim*{#1}}
\DeclarePairedDelimiterX{\expconddelim}[2]{[}{]}{#1\,\delimsize\vert\, \mathopen{}#2} 
\newcommand{\Expcond}[2]{\E\expconddelim*{#1}{#2}}

\newcommand{\R}{\mathbb{R}}

\DeclareMathOperator*{\dom}{dom}
\DeclareMathOperator*{\dist}{dist}

\DeclareMathOperator*{\argmax}{arg\,max}
\DeclareMathOperator*{\argmin}{arg\,min}

\newcommand{\prox}[3][]{\operatorname{prox}^{#1}_{#2}\left(#3 \right)}

\usepackage{amsthm}
\theoremstyle{plain}
\newtheorem{theorem}{Theorem}[section]

\newtheorem{lemma}{Lemma}[section]
\newtheorem{proposition}{Proposition}[section]
\newtheorem{algo}{Algorithm}[section]
\newtheorem{assumption}{Assumption}

\theoremstyle{definition}

\theoremstyle{remark}

\crefname{assumption}{Assumption}{Assumptions}
\crefname{equation}{}{} 

\usepackage{xcolor}

\usepackage[utf8]{inputenc}
\usepackage{pifont}
%

\usepackage{enumerate}

\usepackage{breqn}



\usepackage{graphicx}
\usepackage{caption,subcaption}

\renewcommand{\phi}{\varphi}

\usepackage{mathtools}

\usepackage[normalem]{ulem}

\title{Alternating proximal-gradient steps for (stochastic) nonconvex-concave minimax problems}
\author{
  Radu Ioan Bo{\c t}\footnote{Faculty of Mathematics, University of Vienna, Oskar-Morgenstern-Platz 1, 1090 Vienna, Austria.}
  \and
  Axel B\"ohm$^{*,}$\footnote{Research of Axel B\"ohm supported by the doctoral programme \textit{Vienna Graduate School on Computational Optimization (VGSCO)},
    FWF (Austrian Science Fund), project W 1260.}
  \and \\
  \vspace{-1cm} \\
  \normalsize{\texttt{\{radu.bot, axel.boehm\}@univie.ac.at}}
}

\date{\today}
\begin{document}

\maketitle
\begin{abstract}%
  Minimax problems of the form $\min_x \max_y \Psi(x,y)$ have attracted increased interest largely due to advances in machine learning, in particular generative adversarial networks and adversarial learning. These are typically trained using variants of stochastic gradient descent for the two players.
  Although convex-concave problems are well understood with many efficient solution methods to choose from, theoretical guarantees outside of this setting are sometimes lacking even for the simplest algorithms.
  In particular, this is the case for alternating gradient descent ascent, where the two agents take turns updating their strategies.
  To partially close this gap in the literature we prove a novel global convergence rate for the stochastic version of this method for finding a critical point of $\psi(\cdot) := \max_y \Psi(\cdot,y)$ in a setting which is not convex-concave.
\end{abstract}

\section{Introduction}%
\label{sec:intro}

We investigate the \emph{alternating} variant of gradient descent ascent (GDA) with proximal steps for weakly convex-(strongly) concave saddle point problems, given by 
\begin{equation}
  \label{eq:minimax}
  \min_{x\in\R^d} \max_{y\in\R^n}\, \Big\{\Psi(x,y):=f(x) + \Phi(x,y) - h(y)\Big\}
\end{equation}
for a weakly convex-concave coupling function $\Phi: \R^d\times \R^n \to \R$ and proper, convex and lower semicontinuous (l.s.c.) regularizers $h$ and $f$, see Assumption~\ref{ass:weakly-convex-gradient-lipschitz},~\ref{ass:lipschitz} and~\ref{ass:strongly-concave} for details.

Nonconvex-concave saddle point problems have received a great deal of attention recently due to their application in adversarial learning~\cite{duchi-certifiable}, learning with nondecomposable losses~\cite{fan-top-k-loss,ying-nondecomposable}, learning with uncertain data~\cite{chen-uncertain-data} and generative adversarial imitation learning of linear quadratic regulators~\cite{ho-alt-gda-adversarial-learning}.
Additionally, albeit typically resulting in nonconvex-nonconcave objectives, the large interest in \emph{generative adversarial networks (GANs)}~\cite{GAN,wgan} has led to the studying of saddle point problems under different simplifying assumptions~\cite{bailey2020finite, gidel2019variational, bohm2022two, daskalakis2017training, lin2020gradient}.

In the nonconvex-concave setting \emph{inner loop} methods have received much of the attention~\cite{namkoong2016stochastic, thekumparampil2019efficient, lin2020gradient, kong2021accelerated, ostrovskii2021efficient} with them obtaining the best complexity results in this class, see Table~\ref{tab:rates}. Despite superior theoretical performance these methods have not been as popular in practice, especially in the training of GANs where \emph{single loop} methods are still state-of-the-art~\cite{gidel2019variational, bailey2020finite,daskalakis2017training,GAN,gidel2019negative,ho-alt-gda-adversarial-learning, liu2020decentralized}.
The simplest approach is given by \emph{simultaneous} GDA, which, for a smooth coupling function $\Phi$ and step sizes $\eta_x,\eta_y>0$, reads as:
\begin{equation*}
  \text{(simultaneous)}\quad
  \left\lfloor \begin{array}{l}
           x^+ = x - \eta_x \nabla_x \Phi(x, y) \\
           y^+ = y + \eta_y \nabla_y \Phi(x, y).
         \end{array}\right.
\end{equation*}
After the first step of this method, however, more information is already available, which can be used in the update of the second variable, resulting in
\begin{equation*}
  \text{(alternating)}\quad
  \left\lfloor \begin{array}{l}
           x^+ = x - \eta_x \nabla_x \Phi(x, y) \\
           y^+ = y + \eta_y \nabla_y \Phi(x^+, y).
         \end{array}\right.
\end{equation*}
It has been widely known that the alternating version of GDA has many favorable convergence properties of the simultaneous one~\cite{bailey2020finite,gidel2019negative,xu2020unified,zhang2022near}. It has been long known that for bilinear problems the iterates of simultaneous GDA may diverge while those of the alternating version at least remain bounded. Furthermore,~\cite{gidel2019negative} showed that the alternating version can be made convergent for this simple setting if negative momentum is used, while the same is false for simultaneous GDA.\ In another special setting~\cite{zhang2022near} was able to show better local dependence on the condition number for strongly convex-strongly-concave quadratic problems.
We are naturally interested in --- and will give an affirmative answer to the question:
\begin{center}
  \textbf{Does stochastic alternating GDA have nonasymptotic convergence guarantees for nonconvex minimax problems?}
\end{center}
This might seem surprising as it has been sufficiently demonstrated~\cite{GAN2, gidel2019variational, mesched-GAN-converge, bohm2022two} that both versions of GDA fail to converge for simple bilinear problems if equal step sizes are used. We therefore want to point out the importance of the \emph{two-time-scale} approach which was also emphasized in~\cite{lin2020gradient,heusel-two-time-scale-gan-fid}. However, this alone is also not enough as shown in~\cite{gidel2019negative}. The seeming contradiction is resolved through the observation that our convergence guarantees only concern the $x$-component of the objective function.

\paragraph{Optimality}
For convex-concave minimax problems, the notion of solution is simple. We aim to find a so-called saddle point $(x^*,y^*)\in \R^d\times\R^n$ satisfying
\begin{equation*}
  \label{eq:saddle}
  \Psi(x^*,y) \le \Psi(x^*,y^*) \le \Psi(x,y^*) \quad \forall (x,y) \in \R^d\times\R^n.
\end{equation*}
For convex-concave problems this is equivalent to the first order optimality condition
\begin{equation}
  \label{eq:fop-kkt}
  \left(\begin{array}{c}
          0 \\ 0
  \end{array}\right)
\in{}
\left(\begin{array}{c}
          \nabla_x\Phi(x^*,y^*)\\
          -\nabla_y\Phi(x^*,y^*)
  \end{array}\right)
  +
  \left(\begin{array}{c}
    \partial f(x^*)\\
    \partial h(y^*)
  \end{array}\right).
\end{equation}
Similarly to the nonconvex single objective optimization where one cannot expect to find global minima, if the minimax problem is not convex-concave the notion of saddle point is too strong.
So one natural approach is to focus on conditions such as \cref{eq:fop-kkt}, as done in~\cite{xu2020unified, nouiehed2019solving, hibsa}.
However, treating the two components in such a symmetric fashion might not seem fitting since in contrast to the convex-concave problem $\min_x\max_y \neq \max_y\min_x$.
Instead we will focus, in the spirit of~\cite{lin2020gradient,thekumparampil2019efficient,rafique-provable-minimax}, on the stationarity of what we will refer to as the \emph{max function} given by
\begin{equation}
  \label{eq:max-function}
  \phi(x) := \max_{y\in\R^n}\, \Phi(x,y) - h(y),\quad \text{where $\phi: \R^d \to \R$}.
\end{equation}
This makes sense from the point of view of many practical applications. Problems arising from adversarial learning can be formulated as minimax, but typically only $x$, which corresponds to the classifier is relevant as $y$ is adversarial noise. Similarly, for GANs, one is typically only interested in the generator and not the discriminator.
See Table~\ref{tab:rates} for a comparison of other methods using the same notion of optimality.
Note that it is possible to move from one notion of optimality to the other~\cite{lin2020gradient}, but as both directions are typically associated with additional computational effort a comparison is not trivial and out of scope of this work.

\paragraph{Contributions} We prove novel convergence rates for \emph{alternating} prox-gradient descent ascent for nonconvex-(strongly) concave minimax problems in a deterministic and stochastic setting. 
For deterministic problems,~\cite{xu2020unified} has proved convergence rates for alternating GDA in terms of the criticality of $\Phi$ while we use the max function $\phi$, see \cref{eq:max-function}, instead.
Our results are also more general than e.g.~\cite{lin2020near,zhao2020primal,lin2020gradient} in the sense that they require $\Phi$ to be smooth in the first component wheres we only require weak convexity, similar to~\cite{rafique-provable-minimax}.
Furthermore, we allow for our method to include possibly nonsmooth regularizers, similar to~\cite{zhao2020primal,rafique-provable-minimax}, by passing from a regular projected-gradient to more the more general \emph{proximal-gradient} steps which captures and extends the common constraint setting, necessitating us to prove a more general version of Danskins theorem in the process.

\renewcommand{\arraystretch}{1.2}
\begin{table*}
  \centering
  \caption{
    The gradient complexity of algorithms for nonconvex-(strongly) concave minimax problems for computing  $\epsilon$-stationary points of the max function. $\kappa>0$ is the condition number. The notation $\tilde{\mathcal{O}}$ hides logarithmic terms.} 
  \begin{tabular}{cccccc} 
    \hline 
    & \multicolumn{2}{c}{\textbf{Nonconvex-Strongly Concave}} & \multicolumn{2}{c}{\textbf{Nonconvex-Concave}} & single \\ \cline{2-3} \cline{4-5}
    & deterministic & stocastic & deterministic & stochastic & loop\\
    \hline 
    \cite{rafique-provable-minimax} & $\tilde{\mathcal{O}}(\kappa^2\epsilon^{-2})$ & $\tilde{\mathcal{O}}(\kappa^3\epsilon^{-4})$ & $\tilde{\mathcal{O}}(\epsilon^{-6})$ & $\tilde{\mathcal{O}}(\epsilon^{-6})$ & \ding{55} \\ 
    \cite{zhao2020primal,thekumparampil2019efficient} & -- & -- & $\tilde{\mathcal{O}}(\epsilon^{-3})$ & -- & \ding{55} \\ 
      \cite{lin2020near,ostrovskii2021efficient} & $\tilde{\mathcal{O}}(\sqrt{\kappa}\epsilon^{-2})$ & -- & $\tilde{\mathcal{O}}(\epsilon^{-3})$ & -- & \ding{55} \\ 
    \hline 
    \cite{lin2020gradient} & $\mathcal{O}(\kappa^2\epsilon^{-2})$ & $\mathcal{O}(\kappa^3\epsilon^{-4})$ & $\mathcal{O}(\epsilon^{-6})$ & $\mathcal{O}(\epsilon^{-8})$\footnote[1]{large batchsizes} & \checkmark{} \\ 
    this work & $\mathcal{O}(\kappa^2\epsilon^{-2})$ & $\mathcal{O}(\kappa^3\epsilon^{-4})$ & $\mathcal{O}(\epsilon^{-6})$ & $\mathcal{O}(\epsilon^{-8})$\footnotemark[1] & \checkmark{} \\
    \hline 
  \end{tabular}%
  \label{tab:rates}
\end{table*}

\paragraph{Roadmap} In the remainder of this section we discuss related literature and some real-world applications resulting in nonconvex-concave problems. In Section~\ref{sec:prelim} we discuss the mathematical preliminaries as well as our main assumptions about the involved functions. Section~\ref{sec:non-strongly-concave} and Section~\ref{sec:strongly-concave} are devoted to the setting where the objective function is assumed to be convex and strongly convex, respectively. Both times we treat the deterministic problem first and then the scenario where we are only given a stochastic gradient oracle. Finally, in Section~\ref{sec:experiments} we discussed numerical experiments in adversarial learning. For the interested reader we highlighted the improvements in the analysis of alternating GDA over its simultaneous counterpart in Sections~\ref{sub:altvssim-non-strongly} and~\ref{sub:altvssim-strongly}.

\subsection{Related literature}%

\footnotetext{${}^*$ using large batchsizes of order $\mathcal{O}(\epsilon^{-2})$}
For the purpose of this paper we separate the quantitative study of minimax problems into the following domains.

\paragraph{Convex-concave}
For convex-concave problems historically the \emph{extra-gradient} and the \emph{forward-backward-forward} method have been known to converge. For the former even a rate of $\mathcal{O}(\epsilon^{-1})$ has been proven in~\cite{mirror-prox} under the name of \emph{mirror-prox}. Both of these methods suffer from the drawback of requiring two gradient evaluations per iteration. This has led to the development of methods such as \emph{optimistic GDA}~\cite{daskalakis2017training,daskalakis2018limit} or~\cite{aybat-apd,bohm2022two,gidel2019variational,malitsky2020forward} which use past gradients to reduce the need of gradient evaluations to one per iteration.
In all of these cases, however, convergence guarantees typically do not go beyond the convex-concave setting. Nevertheless, these methods have been employed successfully in the GAN setting~\cite{gidel2019variational, bohm2022two, daskalakis2017training}.

\paragraph{Nonconvex-concave with inner loops}
Approximating the max function by running multiple iterations of a solver on the second component or convexifying the problem by adding a quadratic term and then solving the convex-concave problem constitute natural approaches~\cite{thekumparampil2019efficient, lin2020near, zhao2020primal, rafique-provable-minimax, nouiehed2019solving}. Such methods achieve the best known rates~\cite{thekumparampil2019efficient, lin2020near, zhao2020primal, ostrovskii2021efficient} in this class. However, they are usually quite involved and have for the most part not been used in deep learning applications.

\paragraph{Nonconvex-concave with single loop}
While these methods have received some attention in the training of GANs~\cite{daskalakis2017training,gidel2019variational,bohm2022two} most of the theoretical statement are for convex-concave problems. In the nonconvex setting only two methods have been studied.
Previous research, see~\cite{lin2020gradient,hibsa}, has focused on the \emph{simultaneous} version of the gradient descent ascent algorithm where both components are updated at the same time.
The only other work which focuses on \emph{alternating} GDA is~\cite{xu2020unified}. Their results are in terms of stationarity of $\Phi$ and they do not treat the stochastic case. Note that our work is most similar to~\cite{lin2020gradient} where the same notion of optimality is used and similar rates to our are obtained for \emph{simultaneous} GDA.\

\paragraph{Others}
Clearly the above categories do not cover the entire field. However, other settings have not received as much attention. Only~\cite{xu2020unified} treats (strongly) convex-nonconcave problems and proves convergence rates similar to the nonconvex-(strongly) concave setting. 
In~\cite{tran2020hybrid} a special stochastic nonconvex-linear problem with regularizers is solved via a variance reduced single loop method with a significantly improved rate over the general nonconvex-concave problem.

The most general setting out of all the aforementioned ones is discussed in~\cite{liu2021first, liu2020decentralized, song2020optimistic}, namely the weakly convex-weakly concave setting. They use however, a weaker notion of optimality related to the Minty variational inequality formulation. We also only mentioned (sub)gradient methods, but the restrictive assumption that the proximal operator of a component can be evaluated has been considered as well~\cite{kong2021accelerated}. 

\subsection{Nonconvex-concave applications}%

\subsubsection{Adversarial learning}
Such problems often use an attack model~\cite{madry2017towards} that allows for every pixel to be perturbed up to given threshold $\epsilon$:
\begin{equation*}
  \min_\theta \max_{\Vert z-z_0 \Vert_\infty \le \epsilon}\, \ell(\theta,z),
\end{equation*}
where $z_0$ denotes the ``true'' training examples, and $z$ the adversarial attack. However, this typically leads to nonconvex-nonconcave formulation. So~\cite{namkoong2016stochastic} proposed a distributionally robust model, making use of the Wasserstein distance $W$
\begin{equation*}
   \min_\theta \max_{P :  W(P,P_0)\le \rho}\, \E_P [\ell(\theta,Z)],
\end{equation*}
where $Z\sim P_0 $, which they reformulated via a Lagrangian penalty approach to
\begin{equation}
  \label{eq:duchi-penalty}
  \min_\theta \max_{z}\, \ell(\theta,z) - \gamma \Vert z-z_0 \Vert^2.
\end{equation}
While a larger $\gamma$ corresponds to smaller robustness $\rho$, the model can be made nonconvex-strongly-concave if it is set big enough.

\subsubsection{Generative adversarial imitation learning of linear quadratic regulators}
In imitation learning the objective is to learn from an expert's demonstration of performing a given task. In this case the minimization is performed over the policies with the goal of reducing the discrepancy between the reward of the expert's policy and the proposed one. The maximization is over the parameters of the reward function, see~\cite{ho-alt-gda-adversarial-learning}. If the underlying dynamic and the reward function come from a linear quadratic regulator, see~\cite{imitation-learning-LQR}, this can be expressed as a nonconvex-strongly-concave minimax problem
\begin{equation}
  \min_{K}\max_{\theta}  \, m(K, \theta),
\end{equation}
where $K$ represents the choice of policy and $\theta$ the parameters of the dynamic and reward functions.

\subsubsection{Fair learning}

The work~\cite{fair-learning} observed that a logistic regression model trained on the Fashion-MNIST dataset (comprised of $n=10$ classes) can lead to a bias against certain classes. In order to remove this bias, they proposed to minimize the maximal loss of the different categories, i.e.\
\begin{equation}
  \label{eq:fair}
  \min_{\theta} \max_{1\le i \le n} \, \ell_i(\theta),
\end{equation}
where $\ell_i$ denotes the loss incurred by all examples of class $i$. A similar approach was taken in~\cite{nouiehed2019solving}, but for a more sophisticated CNN model. For the purpose of implementation~\eqref{eq:fair} can be rewritten as
\begin{equation}
  \label{eq:fair-concave}
  \min_{\theta} \max_{(t_1, \dots, t_n) \in \Delta} \, \sum_{i=1}^{n} t_i \ell_i(\theta)
\end{equation}
where $\Delta := \{(t_1, \dots, t_n ): t_i\ge 0, \sum_{i=1}^{n} t_i = 1\}$ denotes the unit simplex. Due to the linearity of~\eqref{eq:fair-concave} in the second variable $(t_1,\dots,t_n)$, the inner maximization problem is in particular concave.

\section{Preliminaries}%
\label{sec:prelim}

As mentioned in the earlier we will consider optimality in terms of the max function for any $x\in\R^d$ given by
 \(\phi(x) := \max_{y\in\R^n}\, \Phi(x,y) - h(y)\). Similarly, we also need the regularized max function
\begin{equation*}
  \psi := \phi+f, \quad \text{where $\psi:\R^d \to \R \cup\{+\infty\}$}.
\end{equation*}
In the remainder of the section we will focus on the necessary preliminaries connected to the weak convexity of the max function in the nonconvex-concave setting, see Section~\ref{sec:non-strongly-concave}.

\subsection{Weak convexity}%
\label{sub:weakly-convex}

In the nonconvex-concave setting of Section~\ref{sec:non-strongly-concave} the max function $\phi$ will in general be nonsmooth, which makes it nonobvious how to define near stationarity.
The max function $\phi$ will, however, turn out to be \emph{weakly convex}, see Proposition~\ref{thm:extended-danskin}.
For some $\rho \ge 0$, we say that
\begin{equation*}
  \label{eq:weakly_convex}
  \mbox{$\psi:\R^d \to \R\cup\{+\infty\}$ is \emph{$\rho$-weakly convex} if $\psi + (\rho/2) \Vert \cdot \Vert^2$ is convex.}
\end{equation*}
An example of a weakly convex function is one which is differentiable and the gradient is uniformly Lipschitz continuous with constant $L$ (we call such a function $L$-smooth). In this case, the weak convexity parameter $\rho$ is given by the Lipschitz constant.

Following~\cite{dima_damek_stoch_weakly_k-4,thekumparampil2019efficient,drusvyatskiy2019efficient,bohm2021variable}, we make use of a smooth approximation of $\psi$ known as the \emph{Moreau envelope} $\psi_\lambda$, parametrized $\lambda>0$.
For a proper, $\rho$-weakly convex and l.s.c.\ function $\psi: \R^d \rightarrow \R\cup\{+\infty\}$, the Moreau envelope of $\psi$ with the parameter $\lambda \in (0,\rho^{-1})$ is the function from $\R^d$ to $\R$ defined by
\begin{equation*}
  \label{eq:moreau}
  \psi_\lambda(x) := \inf_{z \in \R^d} \Big\{ \psi(z) + \frac{1}{2\lambda}\Vert z-x \Vert^2\Big\}.
\end{equation*}
The proximal operator of the function $\lambda \psi$ is the $\arg\min$ of the right-hand side in this definition, that is,
\begin{equation}
  \label{eq:prox}
  \prox{\lambda \psi}{x} := \argmin_{z \in \R^d}\Big\{ \psi(z) + \frac{1}{2\lambda}\Vert z-x \Vert^2\Big\}.
\end{equation}
Note that $\prox{\lambda \psi}{x}$ is uniquely defined by~\eqref{eq:prox} because the function being minimized is proper, l.s.c.\ and strongly convex.
For a function $\psi:\R^d \to \R\cup\{+\infty\}$ and a point $\bar{x}$ such that $\psi(\bar{x})$ is finite, the \emph{Fr\'echet subdifferential} of $\psi$ at $\bar{x}$, denoted by $\partial \psi(\bar{x})$, is the set of all vectors $v \in \R^d$ such that
\begin{equation*}
  \label{eq:subgradient}
  \psi(x) \ge \psi(\bar{x}) + \langle v, x- \bar{x}\rangle + o(\Vert x-\bar{x} \Vert) \quad \text{as $x \to \bar{x}$}.
\end{equation*}
For weakly convex function the Fr\'echet subdifferential can simply be expressed in terms of the convex subdifferential of the (convex) function $\psi+ (\rho/2)\Vert \cdot \Vert^2$.

While the next result is standard for the gradient and convex subgradients we explicitly mention the general case.
\begin{lemma}[{see~\cite[Theorem 3.52]{mordukhovich-variational-analysis}}]%
  \label{lem:lipschitz-bounded-subgradients}
  For an $L_\psi$-Lipschitz continuous function $\psi:\R^d \to \R$ every Fr\'echet subgradient is bounded in norm by $L_\psi$.
\end{lemma}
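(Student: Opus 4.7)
The plan is to work directly from the definition of the Fréchet subdifferential together with the Lipschitz hypothesis, choosing a convenient test direction to extract the bound on $\|v\|$. The key idea is that the subgradient inequality provides a lower bound on $\psi(x)$ near $\bar{x}$, while the Lipschitz condition provides a matching upper bound; comparing the two along the direction of $v$ itself forces $\|v\| \le L_\psi$.

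More concretely, I would fix $\bar{x} \in \R^d$ and an arbitrary $v \in \partial \psi(\bar{x})$, assuming $v \ne 0$ (the case $v=0$ is trivial). From the subgradient definition I have
\begin{equation*}
  \psi(x) - \psi(\bar{x}) \ge \langle v, x-\bar{x}\rangle + o(\|x-\bar{x}\|) \quad \text{as } x\to\bar{x},
\end{equation*}
while the $L_\psi$-Lipschitz property gives
\begin{equation*}
  \psi(x) - \psi(\bar{x}) \le L_\psi \|x-\bar{x}\|.
\end{equation*}
Combining these two inequalities yields
\begin{equation*}
  \langle v, x-\bar{x}\rangle \le L_\psi \|x-\bar{x}\| + o(\|x-\bar{x}\|).
\end{equation*}

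Now I would specialize the test point to $x = \bar{x} + t\,v/\|v\|$ for $t>0$, so that $\|x-\bar{x}\| = t$ and $\langle v, x-\bar{x}\rangle = t\|v\|$. Dividing through by $t$ gives $\|v\| \le L_\psi + o(1)$, and letting $t\downarrow 0$ yields $\|v\| \le L_\psi$, as desired. Since $v$ and $\bar{x}$ were arbitrary, this proves the claim.

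I do not anticipate any real obstacle here: the statement is essentially a one-line consequence of the definitions once the correct test direction is chosen. The only subtle point is to make sure the little-$o$ term is handled correctly in the limit, which is why the explicit scaling $x = \bar{x} + t v/\|v\|$ is convenient.
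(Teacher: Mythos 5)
Your proof is correct. Note that the paper does not prove this lemma at all --- it simply cites \cite[Theorem 3.52]{mordukhovich-variational-analysis} --- so there is no in-paper argument to compare against; your direct derivation from the definition of the Fr\'echet subdifferential, testing along $x=\bar{x}+t\,v/\Vert v \Vert$ and letting $t\downarrow 0$, is the standard first-principles proof and is complete (the sign flip in the little-$o$ term when you combine the two inequalities is harmless, since the negative of an $o(\Vert x-\bar{x}\Vert)$ quantity is again $o(\Vert x-\bar{x}\Vert)$).
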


Now, we provide a useful characterization of the gradient of the Moreau envelope.

\begin{lemma}[{see~\cite[Lemma 2.2]{stoch_weakly_model_based}}]%
  \label{lem:grad_smooth_is_prox}
  Let $\psi: \R^d \to \R\cup\{+\infty\}$ be a proper, $\rho$-weakly convex, and l.s.c.\ function, and let $\lambda \in (0,\rho^{-1})$. Then the Moreau envelope $\psi_\lambda$ is continuously differentiable
  on $\R^d$ with gradient
  \begin{equation*}
    \nabla \psi_\lambda(x) = \frac{1}{\lambda}\left(x - \prox{\lambda \psi}{x}\right) \quad \mbox{for all $x \in \R^d$},
  \end{equation*}
  and this gradient is Lipschitz continuous.
\end{lemma}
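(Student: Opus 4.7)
The plan is to exploit strong convexity of the minimand defining the proximal operator. Since $\psi$ is $\rho$-weakly convex and $\lambda \in (0,\rho^{-1})$, the function $z \mapsto \psi(z) + \tfrac{1}{2\lambda}\|z-x\|^2$ is proper, lower semicontinuous and $(\lambda^{-1}-\rho)$-strongly convex, so its infimum is attained at a unique minimizer which defines $p(x):=\prox{\lambda\psi}{x}$. First, I would establish that the map $p$ is Lipschitz with constant $(1-\lambda\rho)^{-1}$: writing down the strong-convexity inequality at the two minimizers $p(x_1),p(x_2)$ with base points $x_1,x_2$ respectively, summing them, and rearranging with Cauchy--Schwarz yields $\|p(x_1)-p(x_2)\| \le (1-\lambda\rho)^{-1}\|x_1-x_2\|$.

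Next, I would prove differentiability with the claimed gradient via a two-sided sandwich argument based directly on the envelope definition. For any $x,h\in\R^d$, the suboptimality inequalities
\begin{align*}
\psi(p(x+h)) + \tfrac{1}{2\lambda}\|p(x+h)-x\|^2 &\ge \psi_\lambda(x),\\
\psi(p(x)) + \tfrac{1}{2\lambda}\|p(x)-x-h\|^2 &\ge \psi_\lambda(x+h),
\end{align*}
together with the identities $\psi_\lambda(x) = \psi(p(x)) + \tfrac{1}{2\lambda}\|p(x)-x\|^2$ and its analogue at $x+h$, give after expansion
\[
\bigl\langle \tfrac{1}{\lambda}(x-p(x+h)),\,h\bigr\rangle + \tfrac{\|h\|^2}{2\lambda} \;\le\; \psi_\lambda(x+h)-\psi_\lambda(x) \;\le\; \bigl\langle \tfrac{1}{\lambda}(x-p(x)),\,h\bigr\rangle + \tfrac{\|h\|^2}{2\lambda}.
\]
Letting $h\to 0$ and invoking the Lipschitz continuity of $p$ just established, both bounds have the same linear part $\langle \tfrac{1}{\lambda}(x-p(x)),h\rangle$ up to $o(\|h\|)$, which is precisely the definition of Fr\'echet differentiability with $\nabla\psi_\lambda(x) = \lambda^{-1}(x-p(x))$. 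Finally, Lipschitz continuity of $\nabla\psi_\lambda$ is then immediate from the Lipschitzness of $p$: a constant of $\lambda^{-1}(1 + (1-\lambda\rho)^{-1})$ drops out by the triangle inequality.

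The main obstacle is not any single inequality but rather making the weak-convexity case go through cleanly: for convex $\psi$ the envelope arguments are textbook, but here one must track that the strong convexity modulus is only $\lambda^{-1}-\rho$ rather than $\lambda^{-1}$, which is exactly what makes the Lipschitz constant of $p$ blow up as $\lambda\rho\to 1$ and forces the restriction $\lambda\in(0,\rho^{-1})$. Once the sandwich inequalities are written down, the rest is essentially taking limits carefully using continuity of $p$.
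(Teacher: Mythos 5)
Your proof is correct. The paper does not prove this lemma itself---it only cites \cite[Lemma 2.2]{stoch_weakly_model_based}---and your argument (the $(\lambda^{-1}-\rho)$-strong convexity of the prox minimand yielding a $(1-\lambda\rho)^{-1}$-Lipschitz map $x\mapsto\prox{\lambda\psi}{x}$, followed by the two-sided suboptimality sandwich whose linear parts differ by $O(\|h\|^2)$) is precisely the standard route taken in that reference, with the correct constants.
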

In particular, a gradient step with respect to the Moreau envelope corresponds to a proximal step, that is,
\begin{equation}%
  \label{eq:ys9}
  x - \lambda \nabla \psi_\lambda(x) = \prox{\lambda \psi}{x} \quad \mbox{for all $x \in \R^d$}.
\end{equation}


\paragraph{Stationarity}
The Moreau envelope allows us to naturally define a notion of \emph{near stationarity} even for nonsmooth and $\rho$-weakly convex functions.
We say that for an $\epsilon>0$ and a $\lambda\in(0,\rho^{-1})$
\begin{equation}
  \label{eq:epsilon-stationary}
  \mbox{a point $x$ is $\epsilon$-\emph{stationary} for $\psi$ if $\Vert \nabla \psi_\lambda(x) \Vert \le \epsilon$}.
\end{equation}
Canonically, we call a point stationary if the above holds for $\epsilon=0$.
This notion of near stationarity can also be expressed in terms the original function $\psi$.
\begin{lemma}%
  \label{lem:grad-moreau-in-partial-prox}
  Let $x$ be $\epsilon$-stationary for the proper, $\rho$-weakly convex and l.s.c.\ function $\psi$, i.e.\ $\Vert \nabla \psi_\lambda(x) \Vert \le \epsilon$ with $\lambda \in (0,\rho^{-1})$. Then there exist a point $\hat{x}$ such that
  $\Vert x-\hat{x} \Vert\le \epsilon\lambda$ and $\dist(0,\partial \psi(\hat{x}))\le \epsilon$.
\end{lemma}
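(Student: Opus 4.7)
The plan is to show that the natural candidate for $\hat x$ is simply the proximal point $\hat x := \prox{\lambda\psi}{x}$, and then both conclusions fall out of the identification of $\nabla\psi_\lambda(x)$ with $\tfrac{1}{\lambda}(x-\hat x)$ together with the first-order optimality condition defining $\hat x$.

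First, I would invoke Lemma~\ref{lem:grad_smooth_is_prox} (which applies since $\psi$ is proper, lsc, $\rho$-weakly convex and $\lambda\in(0,\rho^{-1})$). This gives
\[
 \nabla\psi_\lambda(x) \;=\; \tfrac{1}{\lambda}\bigl(x-\prox{\lambda\psi}{x}\bigr) \;=\; \tfrac{1}{\lambda}(x-\hat x).
\]
From $\Vert\nabla\psi_\lambda(x)\Vert\le\epsilon$ one immediately gets $\Vert x-\hat x\Vert=\lambda\Vert\nabla\psi_\lambda(x)\Vert\le\epsilon\lambda$, which is the first claim.

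For the second claim I would use the definition $\hat x=\argmin_z\{\psi(z)+\tfrac{1}{2\lambda}\Vert z-x\Vert^2\}$. Since $\lambda<\rho^{-1}$, the function $z\mapsto \psi(z)+\tfrac{1}{2\lambda}\Vert z-x\Vert^2$ is proper, lsc, and strongly convex (its weakly convex part $\psi$ is dominated by the strictly convex quadratic), so at its unique minimizer $\hat x$ the Fr\'echet subdifferential contains $0$. Applying the sum rule for the Fr\'echet subdifferential (valid here because the quadratic is smooth, so its subdifferential is just its gradient),
\[
 0 \;\in\; \partial\psi(\hat x) + \tfrac{1}{\lambda}(\hat x - x),
\]
which rearranges to $\tfrac{1}{\lambda}(x-\hat x)\in\partial\psi(\hat x)$. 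Therefore
\[
 \dist(0,\partial\psi(\hat x)) \;\le\; \Bigl\Vert\tfrac{1}{\lambda}(x-\hat x)\Bigr\Vert \;=\; \Vert\nabla\psi_\lambda(x)\Vert \;\le\; \epsilon.
\]

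The only real subtlety is justifying the subgradient sum rule in the Fr\'echet sense for a weakly convex summand; this is standard, since by definition of weak convexity $\psi+\tfrac{\rho}{2}\Vert\cdot\Vert^2$ is convex and its convex subdifferential equals the Fr\'echet subdifferential of $\psi$ shifted by $\rho(\cdot)$, so the optimality condition above is equivalent to the usual convex one for the strongly convex reformulation $\psi(z)+\tfrac{\rho}{2}\Vert z\Vert^2 + \tfrac{1}{2\lambda}\Vert z-x\Vert^2 - \tfrac{\rho}{2}\Vert z\Vert^2$. Modulo this standard step, the lemma is an essentially one-line consequence of Lemma~\ref{lem:grad_smooth_is_prox} combined with the variational characterization of the proximal map.
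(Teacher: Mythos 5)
Your proposal is correct and follows essentially the same route as the paper: take $\hat{x}=\prox{\lambda \psi}{x}$, use the optimality condition $0 \in \partial \psi(\hat{x}) + \tfrac{1}{\lambda}(\hat{x}-x)$ together with the identity $\nabla \psi_\lambda(x) = \tfrac{1}{\lambda}(x-\hat{x})$ to conclude $\nabla \psi_\lambda(x) \in \partial \psi(\hat{x})$, from which both bounds follow. Your additional remarks justifying the Fr\'echet subdifferential sum rule via the convex reformulation are a welcome elaboration of a step the paper leaves implicit.
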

\begin{proof}
  From the definition of the Moreau envelope, we have that
  \begin{equation*}
    0 \in \partial \psi(\prox{\lambda \psi}{x}) + \frac{1}{\lambda} (\prox{\lambda \psi}{x} - x),
  \end{equation*}
  from which $\nabla \psi_\lambda(x) \in \partial \psi (\prox{\lambda \psi}{x})$
  follows by using~\eqref{eq:ys9}. It is easy to see that $\hat{x}=\prox{\lambda \psi}{x}$ fulfills the required conditions.
\end{proof}

\subsection{About the stochastic setting}%

We discuss the stochastic version of problem~\eqref{eq:minimax} where the coupling function $\Phi$ is actually given as an expectation,
\begin{equation*}
  \Phi(x,y) = \E_{\xi\sim\mathcal{D}}\left[ \Phi(x,y;\xi)\right] \quad \forall (x,y)\in\R^d\times\R^n
\end{equation*}
and we can only access independent samples of the gradient $\nabla_x\Phi(x,y;\xi)$ (or subgradient) and $\nabla_y\Phi(x,y;\zeta)$, where $\xi$ and $\zeta$ are drawn from the (in general unknown) distribution $\mathcal{D}$.

We require the following standard assumption with respect to these sto\-chastic gradient estimators.
\begin{assumption}[unbiased]%
  \label{ass:unbiased}
  The stochastic gradient estimator is unbiased, i.e.
  \begin{equation*}
    \Exp{\nabla\Phi(x,y;\xi)} = \nabla\Phi(x,y) \quad \forall (x,y)\in \R^d\times \R^n,
  \end{equation*}
  or in the case of subgradients
  \begin{equation*}
    \Exp{g^\xi} \in \partial [\Phi(\cdot,y)](x), \quad\text{where $g^\xi \in \partial [\Phi(\cdot,y;\xi)](x)$}.
  \end{equation*}
\end{assumption}

\begin{assumption}[bounded variance]%
  \label{ass:bounded-variance}
  The variance of the estimator is uniformly bounded, i.e.\ for all $(x,y)\in\R^d\times\R^n$ and a variance $\sigma^2\ge0$ we have
  \begin{equation}
    \label{eq:bounded-variance}
    \Exp{\Vert \nabla\Phi(x,y;\xi) - \nabla\Phi(x,y) \Vert^2} \le \sigma^2.
  \end{equation}
  In the setting of Section~\ref{sec:non-strongly-concave} where $\Phi$ is not necessarily smooth in the first component, we make the analogous assumption for subgradients, i.e.\
  \begin{equation}
    \label{eq:bounded-variance-subgradient}
    \Exp{\left\Vert g^\xi-\Exp{g^\xi}  \right\Vert^2} \le \sigma^2,
  \end{equation}
  for a stochastic subgradient $g^\xi\in \partial[\Phi(\cdot,y;\xi)](x)$.
\end{assumption}

\subsection{The algorithm}%

Since we cover different settings such as smooth or not, deterministic and stochastic we try to formulate a unifying scheme.

\begin{algo}[proximal alternating GDA]%
  \label{alg:alternatinggda}
  Let $(x_0,y_0)\in \R^d\times\R^n $ and step sizes $\eta_x,\eta_y > 0$. Consider the following iterative scheme
  \begin{equation*}
    \label{eq:alt-gda-algorithm}
    (\forall k \geq 0) \quad
    \left\lfloor
      \begin{array}{l}
        x_{k+1} = \prox{\eta_x f}{x_k - \eta_x G_x(x_k,y_k)} \\
        y_{k+1} = \prox{\eta_y h}{y_k + \eta_y G_y(x_{k+1},y_k)},
      \end{array}\right.
  \end{equation*}
  where $G_x$ and $G_y$ will be replaced by the appropriate (sub)gradient and its estimator in the deterministic and stochastic setting, respectively. 
\end{algo}

\subsection{Notation}%

We collect different symbols used through this manuscript.

\vspace{0.1cm}
\begin{center}
  \begin{tabular}{| l | l |} 
    \hline 
    \hline 
    Object & Definition \\
    \hline 
    Coupling function & $\Phi(x,y)$ \\
    Objective function & $\Psi(x,y):= f(x) + \Phi(x,y) - h(y)$ \\
    Regularized coupling function & $\Gamma(x,y):=\Phi(x,y) - h(y)$ \\
    Max function & $\phi(x):= \max_{y} \Phi(x,y) - h(y)$ \\
    Regularized max function & $\psi(x):= f(x)+\phi(x)$ \\
    \hline 
    \hline 
  \end{tabular}
\end{center}
\vspace{0.1cm}

\noindent Note that the regularized coupling function $\Gamma$ is only needed in proofs and some technical lemmata. The remaining functions confirm to the logic that small letters denote functions maximized in the second component (and thus only depend on $x$). On the other hand (no matter if capital or not) the letter \emph{psi} indicates the presence of regularizers and \emph{phi} their absence.

\section{Nonconvex-concave objective}%
\label{sec:non-strongly-concave}

In this section we treat the case where the objective function is weakly convex and Lipschitz in $x$, but not necessarily smooth, and concave and smooth in $y$. This will result in a weakly convex and Lipschitz max function whose Moreau envelope we will study for criticality, see~\eqref{eq:epsilon-stationary}.

\subsection{Assumptions}%
\label{sub:ass}

While the first assumption concerns general setting of this section, i.e.\ weakly convex-concave, the latter ones are more of a technical nature.

\begin{assumption}%
  \label{ass:weakly-convex-gradient-lipschitz}
  The coupling function $\Phi$ is
  \begin{enumerate}
    \item[(i)]
      concave and $L_{\nabla\Phi}$-smooth in the second component uniformly in $x$,
      \begin{equation*}
        \Vert \nabla_y\Phi (x,y) - \nabla_y\Phi(x, y') \Vert \le L_{\nabla\Phi} \Vert y - y' \Vert \quad \forall x \in \R^d \, \forall y,y' \in \R^n.
      \end{equation*}
      \item[(ii)]
        $\rho$-weakly convex in the first component uniformly in the second one, i.e.
        \begin{equation*}
          \Phi(\cdot,y) + \frac{\rho}{2}\Vert \cdot \Vert^2 \quad \text{is convex for all $y\in\R^n$}.
        \end{equation*}
  \end{enumerate}
\end{assumption}
Assumption~\ref{ass:weakly-convex-gradient-lipschitz} is fulfilled if e.g.\ $\Phi$ is $L_{\nabla \Phi}$-smooth jointly in both components, i.e.\
\begin{equation*}
    \Vert \nabla \Phi (x,y) - \nabla\Phi(x', y') \Vert \le L_{\nabla\Phi} \Vert (x,y) - (x',y') \Vert \quad \forall x,x' \in \R^d \,\, \forall y,y' \in \R^n,
\end{equation*}
in which case (ii) holds with $\rho=L_{\nabla\Phi}$.

The next assumption is classical in nonconvex optimization.
\begin{assumption}%
  \label{ass:max-function-lower-bounded}
  The function $\psi$ is lower bounded, i.e.\ $\inf_{x\in\R^d}\, \psi(x) > -\infty$.
\end{assumption}
In Section~\ref{sec:non-strongly-concave} we will actually need to bound the Moreau envelope $\psi_\lambda$, but these two conditions are in fact equivalent as for all $x\in\R^d$ and any $\lambda\in (0,\rho^{-1})$
\begin{equation*}
  \psi(x) \ge \psi_\lambda(x) \ge \inf_{u\in\R^d}\, \psi(u).
\end{equation*}
We also want to point out that this is weaker than the lower boundedness of $\Psi$, which is usually required if stationary points of the type~\eqref{eq:fop-kkt} are used, see for example~\cite{hibsa}.

\begin{assumption}%
  \label{ass:lipschitz}
  $\Phi$ is $L$-Lipschitz in the first component uniformly over $\dom h$ in the second one, i.e.
  \begin{equation*}
    \Vert \Phi(x, y) - \Phi(x', y) \Vert \le L \Vert x - x' \Vert \quad \forall x,x' \in \R^d\; \forall y \in \dom h.
  \end{equation*}
\end{assumption}

\begin{assumption}%
  \label{ass:regularizers}
  The regularizers $f$ and $h$ are proper, l.s.c.\ and convex
  \begin{enumerate}
    \item[(i)]
      Additionally, $f$ is either $L_f$-Lipschitz continuous on its domain, which is assumed to be open, or the indicator of a nonempty, convex and closed set.
      Either of those assumptions guarantees for any $\gamma>0$ the bound
      \begin{equation}
        \label{eq:bound-prox-f}
        \Vert \prox{\gamma f}{x}-x \Vert \le \gamma L_f \quad \forall x \in \dom f.
      \end{equation}
    \item[(ii)]
    Furthermore, $h$ has a bounded domain $\dom h$ such that the diameter of $\dom h$ is bounded by $D_h$.
  \end{enumerate}
\end{assumption}

\subsection{Properties of the max function}%
\label{sub:properties-max-function-nonstrongly}

Previous research, when concluding the weak convexity of the max function, has relied on the compactness of the domain over which to maximize. This is done so that the classical Danskin Theorem can be applied. This assumption is e.g.\ fulfilled in the context of \emph{Wasserstein GANs}~\cite{wgan} with weight clipping, but not in other formulations such as~\cite{wgan-gp}.
We provide an extension of the classical Danskin Theorem, which only relies on the concavity and l.s.c.\ of the objective in the second component and the boundedness of $\dom h$, see Assumption~\ref{ass:weakly-convex-gradient-lipschitz} and~\ref{ass:regularizers}.
This implies that for every $x\in\R^d$ the set
\begin{equation}
  \label{eq:solution-set-mapping}
  Y(x) := \Big\{y^* \in \R^n : \phi(x) = \Phi(x,y^*) -h(y^*) = \max_{y\in\R^n} \{\Phi(x,y)-h(y)\} \Big\}
\end{equation}
is nonempty. For brevity we denote arbitrary elements of $Y(x_k)$ by $y_k^*$ for all $k\ge0$.

\begin{proposition}[Subgradient characterization of the max function]%
  \label{thm:extended-danskin}
  Let Assumption~\ref{ass:weakly-convex-gradient-lipschitz} and~\ref{ass:regularizers} hold true.
  Then, the function $\phi$, see~\eqref{eq:max-function}, fulfills
  \begin{equation*}
    \partial [\Phi(\cdot,y^*)](x) \subseteq \partial\phi(x) \quad \forall y^*\in Y(x), \forall x\in \R^d.
  \end{equation*}
  In particular, $\phi$ is $\rho$-weakly convex.
\end{proposition}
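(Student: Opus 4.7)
The plan is to combine the variational characterization of Fr\'echet subgradients for weakly convex functions with the optimality of $y^*$. Before anything, I need existence of a maximizer: under Assumption~\ref{ass:weakly-convex-gradient-lipschitz}(i), $\Phi(x,\cdot)$ is continuous (being differentiable with Lipschitz gradient) and concave, while $-h$ is upper semicontinuous. Since $\dom h$ is bounded by Assumption~\ref{ass:regularizers}(ii), the supremum in the definition of $\phi(x)$ may be restricted to the compact set $\overline{\dom h}$, where the (extended) map $y \mapsto \Phi(x,y)-h(y)$ is upper semicontinuous and proper. Hence the maximum is attained at some point in $\dom h$, so $Y(x) \neq \emptyset$.

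Next I would fix any $x \in \R^d$, any $y^* \in Y(x)$, and any $g \in \partial[\Phi(\cdot,y^*)](x)$. Since $\Phi(\cdot,y^*)$ is $\rho$-weakly convex by Assumption~\ref{ass:weakly-convex-gradient-lipschitz}(ii), the Fr\'echet subdifferential coincides with the shifted convex subdifferential: $g \in \partial[\Phi(\cdot,y^*)](x)$ is equivalent to $g + \rho x$ being a convex subgradient of $\Phi(\cdot,y^*) + (\rho/2)\Vert\cdot\Vert^2$ at $x$. Expanding that convex subgradient inequality and canceling the quadratic terms yields the global lower bound
\begin{equation*}
  \Phi(x',y^*) \ge \Phi(x,y^*) + \langle g, x'-x\rangle - \tfrac{\rho}{2}\Vert x'-x \Vert^2 \qquad \forall x' \in \R^d.
\end{equation*}

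To transfer this to $\phi$, I exploit the optimality of $y^*$: by definition $\phi(x) = \Phi(x,y^*) - h(y^*)$, and for any $x'$ we have the trivial upper bound $\phi(x') \ge \Phi(x',y^*) - h(y^*)$ (plug the same $y^*$ into the sup). Subtracting $h(y^*)$ from both sides of the previous inequality and chaining gives
\begin{equation*}
  \phi(x') \ge \phi(x) + \langle g, x'-x\rangle - \tfrac{\rho}{2}\Vert x'-x \Vert^2 \qquad \forall x' \in \R^d.
\end{equation*}
Since $\Vert x'-x \Vert^2 = o(\Vert x'-x\Vert)$ as $x' \to x$, this is in particular a Fr\'echet subgradient inequality, so $g \in \partial\phi(x)$, proving the claimed inclusion.

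Finally, for the weak convexity statement I would argue directly: for each fixed $y \in \R^n$, the function $x \mapsto \Phi(x,y) + (\rho/2)\Vert x\Vert^2 - h(y)$ is convex by Assumption~\ref{ass:weakly-convex-gradient-lipschitz}(ii), and the pointwise supremum of convex functions over $y$ is convex, so $\phi + (\rho/2)\Vert\cdot\Vert^2 = \sup_{y}\{\Phi(\cdot,y) + (\rho/2)\Vert\cdot\Vert^2 - h(y)\}$ is convex, i.e., $\phi$ is $\rho$-weakly convex. The only delicate point in the whole argument is the nonsmoothness of $\Phi(\cdot,y^*)$: one cannot appeal to classical Danskin here, and the cleanest way around it is the weak-convexity subgradient characterization used above, which automatically produces the \emph{global} inequality needed to pass through the sup.
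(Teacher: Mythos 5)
Your proof is correct and rests on the same mechanism as the paper's: after tilting by $(\rho/2)\Vert\cdot\Vert^2$, the max function majorizes $\Gamma(\cdot,y^*)$ everywhere and touches it at $x$, so any subgradient of the minorant at the touching point is a subgradient of $\phi$ there. The only cosmetic difference is that you verify the Fr\'echet subgradient inequality globally and directly via the weak-convexity three-point bound, whereas the paper routes the identical comparison through directional derivatives of the convexified functions $\tilde{\phi}$ and $\tilde{\Phi}(\cdot,y)$; your explicit compactness argument for $Y(x)\neq\emptyset$ is a welcome addition that the paper only asserts.
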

\begin{proof}
  From the $\rho$-weak convexity of $\Phi(\cdot,y)$, we have that $\Phi(\cdot,y) + \frac{\rho}{2} \|\cdot\|^2$ is convex for all $y\in\R^n$. We define $\tilde{\Phi}(x,y) = \Phi(x,y) + \frac\rho2 \|x\|^2$ and $\tilde{\Gamma}(x,y) = \Gamma(x,y)+\frac{\rho}{2}\Vert x \Vert^2$ for $(x,y)\in\R^d\times\R^n$ as well as
  \begin{equation*}
    \tilde{\phi}(x) = \max_{y \in \R^n} \tilde{\Gamma}(x,y) = \phi(x) + \frac{\rho}{2} \|x\|^2.
  \end{equation*}
  Notice that $\tilde{\Gamma}(x,\cdot)$ is concave for any $x \in \R^d$ and $\tilde{\Gamma}(\cdot,y)$ is convex for any $y \in \R^n$.
  Thus, the function $\tilde{\phi}$ is convex and $\dom \tilde{\phi} = \dom \phi = \R^d$. Therefore $\phi$ is continuous, which implies that $\partial \phi(x) \neq \emptyset$ for any $x \in \R^d$.
  Let $x \in \R^d$, $y \in Y(x)$ and $v \in \R^d$. For any $\alpha >0$ it holds
  \begin{equation*}
    \frac{\tilde{\phi}(x+\alpha v) - \tilde{\phi}(x)}{\alpha} \geq \frac{\tilde{\Gamma}(x+\alpha v, y) - \tilde{\Gamma}(x,y)}{\alpha} =  \frac{\tilde{\Phi}(x+\alpha v, y) - \tilde{\Phi}(x,y)}{\alpha},
  \end{equation*}
  thus
  \begin{equation}
    \begin{aligned}
      \tilde{\phi}'(x;v) &= \inf_{\alpha >0} \frac{\tilde{\phi}(x+\alpha v) - \tilde{\phi}(x)}{\alpha} \geq \inf_{\alpha >0} \frac{\tilde{\Phi}(x+\alpha v, y) - \tilde{\Phi}(x,y)}{\alpha} = [\tilde{\Phi}(\cdot,y)]'(x;v),
    \end{aligned}
  \end{equation}
  where $[\Phi(\cdot,y)]'(x;v)$ denotes the directional derivative of $\Phi$ in the first component at $x$ in the direction $v$.
  In conclusion,
  \begin{equation}
    \label{eq1}
    \tilde{\phi}'(x;v) \geq \sup_{y \in Y(x)}  [\tilde{\Phi}(\cdot,y)]'(x;v) \quad \forall v \in \R^d
  \end{equation}
  and for $y\in Y(x)$ we therefore conclude $\partial [\tilde{\Phi}(\cdot,y)](x) \subseteq \partial\tilde{\phi}(x)$.
  The first statement is obtained by subtracting $\rho x$ on both sides of the inclusion.
\end{proof}

\begin{lemma}[Lipschitz continuity of the max function]%
  \label{lem:max-is-lipschitz}
  The Lipschitz continuity of $\Phi$ in its first component implies that $\phi$ is Lipschitz with the same constant.
\end{lemma}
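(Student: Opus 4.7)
The plan is a direct two-sided inequality argument using the standard trick for suprema of Lipschitz families, combined with the observation that any maximizer must lie in $\dom h$.

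Fix $x, x' \in \R^d$. By Proposition~\ref{thm:extended-danskin} (established under the running assumptions) the set $Y(x)$ is nonempty, so I can pick $y^* \in Y(x)$ and $y'^* \in Y(x')$. Since $\phi(x) = \Phi(x,y^*) - h(y^*)$ is finite, both $y^*$ and $y'^*$ lie in $\dom h$, which is precisely where Assumption~\ref{ass:lipschitz} guarantees the $L$-Lipschitz bound on $\Phi(\cdot, y)$.

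Using $y'^*$ as a suboptimal candidate for the maximum defining $\phi(x)$, I get
\begin{equation*}
\phi(x) - \phi(x') \;\ge\; [\Phi(x, y'^*) - h(y'^*)] - [\Phi(x', y'^*) - h(y'^*)] \;=\; \Phi(x,y'^*) - \Phi(x', y'^*) \;\ge\; -L\|x - x'\|,
\end{equation*}
and symmetrically, testing $\phi(x')$ against $y^*$,
\begin{equation*}
\phi(x) - \phi(x') \;\le\; [\Phi(x, y^*) - h(y^*)] - [\Phi(x', y^*) - h(y^*)] \;\le\; L\|x-x'\|.
\end{equation*}
Combining the two bounds yields $|\phi(x) - \phi(x')| \le L\|x - x'\|$.

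There is essentially no obstacle: the only subtle point is making sure the maximizers used in both directions are elements of $\dom h$, so that Assumption~\ref{ass:lipschitz} (which is only stated over $\dom h$) applies. This is immediate from the fact that $\phi(x)$ and $\phi(x')$ are finite, equivalently from the existence of maximizers guaranteed by the extended Danskin result together with the boundedness of $\dom h$ in Assumption~\ref{ass:regularizers}(ii).
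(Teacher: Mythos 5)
Your proof is correct and follows essentially the same route as the paper: bound $\phi(x)-\phi(x')$ from above by testing $\phi(x')$ against a maximizer $y^*\in Y(x)$ and apply the $L$-Lipschitz bound on $\Phi(\cdot,y^*)$, then argue symmetrically for the other direction. Your explicit remark that the maximizers lie in $\dom h$ (so that Assumption~\ref{ass:lipschitz} applies) is a small but welcome clarification that the paper leaves implicit.
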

\begin{proof}
  Let $x,x' \in \R^d$ and $y^*\in Y(x)$. On the one hand
  \begin{equation*}
    \begin{aligned}
      \phi(x) - \phi(x') &= \Phi(x,y^*)-h(y^*) - \phi(x')\\
                        &\le \Phi(x,y^*)-h(y^*) - \Phi(x',y^*)+h(y^*)\le L \Vert x-x' \Vert.
    \end{aligned}
  \end{equation*}
  The reverse direction $\phi(x') - \phi(x) \le L\Vert x-x' \Vert$ follows analogously.
\end{proof}

\subsection{Deterministic setting}%
\label{sub:deterministic}

For initial values $(x_0,y_0)\in \dom f \times \dom h$ the deterministic version of alternating GDA, for $g_k \in \partial [\Phi(\cdot,y_k)](x_k)$, reads as
\begin{equation}%
  \label{eq:alt-gda-non-strongly}
  (\forall k \ge 0)
  \left\lfloor
    \begin{array}{l}
      x_{k+1} = \prox{\eta_x f}{x_k - \eta_x g_k} \\
      y_{k+1} = \prox{\eta_y h}{y_k + \eta_y \nabla_y\Phi(x_{k+1},y_k)}.
    \end{array}\right.
\end{equation}

\begin{theorem}%
  \label{thm:alternating-rate}
  Let Assumption~\ref{ass:weakly-convex-gradient-lipschitz},~\ref{ass:max-function-lower-bounded},~\ref{ass:lipschitz} and~\ref{ass:regularizers} hold true.
  For algorithm~\eqref{eq:alt-gda-non-strongly} with the step sizes
  \begin{equation*}
  \eta_x=\min \left\{\frac{\epsilon^4}{L_{\nabla\Phi}\rho^2 D_h^2{(L+L_f)}^2}, \frac{\epsilon^2}{\rho L^2}\right\}, \eta_y=\frac{1}{L_{\nabla\Phi}} \,\, \text{and}\,\, \lambda = \frac{1}{2\rho}
  \end{equation*}
  the number of gradient evaluations $K$ required is
  \begin{equation*}
    \begin{aligned}
      \mathcal{O}\left( \frac{\Delta^* \rho {(L+L_f)}^2}{\epsilon^4} \max \Big\{1, \frac{\rho L_{\nabla\Phi} D_h^2}{\epsilon^2} \Big\} + \frac{\rho \Delta_0}{\epsilon^2}\right),
    \end{aligned}
  \end{equation*}
  to visit an $\epsilon$-stationary given by $\min_{1\le k\le K}\Vert \nabla \psi_\lambda(x_k) \Vert \le \epsilon$, where $\Delta^* := \psi(x_0) - \inf_{x\in\R^d} \psi(x)$ and $\Delta_0 :=\psi(x_0)-\Psi(x_0,y_0)$.
\end{theorem}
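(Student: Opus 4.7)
The plan is to use the Moreau envelope $\psi_\lambda$ as a Lyapunov potential and show a per-iteration descent of the form
$\psi_\lambda(x_{k+1}) \le \psi_\lambda(x_k) - c_1 \eta_x \|\nabla \psi_\lambda(x_k)\|^2 + c_2 \eta_x \delta_k$,
where $\delta_k := \phi(x_k) - (\Phi(x_k,y_k) - h(y_k)) \ge 0$ is the inner duality gap at the current $y_k$. To obtain this, I would start from the inequality $\psi_\lambda(x_{k+1}) \le \psi(\hat{x}_k) + \frac{1}{2\lambda}\|\hat{x}_k - x_{k+1}\|^2$ for $\hat{x}_k := \prox{\lambda \psi}{x_k}$, expand $\|\hat{x}_k - x_{k+1}\|^2$ using the prox optimality for the $x$-update, and lower-bound $\psi(\hat{x}_k)$ using the convexity of $f$ together with the $\rho$-weak convexity of $\Phi(\cdot,y_k)$ applied to the subgradient $g_k$. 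The error term $\delta_k$ enters because Proposition~\ref{thm:extended-danskin} only places $\partial[\Phi(\cdot,y_k^*)](x_k)$ inside $\partial \phi(x_k)$ for a true maximizer $y_k^* \in Y(x_k)$, so comparing $g_k$ to such an ideal subgradient via concavity of $\Gamma(x_k,\cdot)$ costs exactly $\delta_k$.

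Next I would control the dual gap $\delta_k$. Since $\Gamma(x,\cdot)$ is concave and $L_{\nabla\Phi}$-smooth, the $y$-update with stepsize $\eta_y = 1/L_{\nabla\Phi}$ yields the textbook proximal-gradient ascent estimate
$\Gamma(x_{k+1}, y_{k+1}) \ge \Gamma(x_{k+1}, y_k) + \tfrac{L_{\nabla\Phi}}{2}\|y_{k+1}-y_k\|^2$
and combined with concavity plus the boundedness $D_h$ of $\dom h$ gives, after standard manipulation, a bound on $\Gamma(x_{k+1},y^*) - \Gamma(x_{k+1}, y_{k+1})$ in terms of $\Gamma(x_{k+1},y^*) - \Gamma(x_{k+1}, y_k)$ and $L_{\nabla\Phi} D_h^2$. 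Transitioning from $\delta_{k+1}$ at $(x_{k+1},y_{k+1})$ to $\delta_k$ at $(x_k,y_k)$ introduces a perturbation error governed by $\|x_{k+1}-x_k\|$: by Lemma~\ref{lem:max-is-lipschitz} and Assumption~\ref{ass:lipschitz}, both $\phi$ and $\Gamma(\cdot,y)$ are $L$-Lipschitz, while the prox characterization and Assumption~\ref{ass:regularizers}(i) give $\|x_{k+1}-x_k\| \le \eta_x(L+L_f)$. This produces a recursion of the form $\delta_{k+1} \le \delta_k + 2L\eta_x(L+L_f) - (\text{nonneg progress on }\delta_k)$ from which one extracts a telescoping sum $\sum_{k=0}^{K-1} \delta_k \le \mathcal{O}(L_{\nabla\Phi}D_h^2) + \mathcal{O}(K^2 L(L+L_f) \eta_x)$, or more precisely an amortized bound after a standard averaging argument.

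Feeding this into the primal descent and telescoping over $k=0,\dots,K-1$ gives
$\min_{1 \le k \le K} \|\nabla \psi_\lambda(x_k)\|^2 \le \frac{C_1 \Delta^*}{K \eta_x} + C_2 \cdot \frac{\rho L_{\nabla\Phi} D_h^2}{K} + C_3 \rho L(L+L_f) \eta_x + \frac{C_4 \rho \Delta_0}{K}$,
after also accounting for the initial gap $\Delta_0 = \psi(x_0) - \Psi(x_0,y_0)$. Choosing $\lambda = 1/(2\rho)$ and the stepsize $\eta_x$ stated in the theorem balances the $1/(K \eta_x)$ term against the $\eta_x$ term: the first branch of the $\min$ in $\eta_x$ balances the primal potential drop with the dual gap accumulation $L_{\nabla\Phi} D_h^2 (L+L_f)^2$, while the second branch $\epsilon^2/(\rho L^2)$ balances the primal drop against the Lipschitz-induced drift $L(L+L_f)$. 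Requiring each term to be at most $\epsilon^2$ then yields the claimed complexity. The main obstacle in this program is the absence of strong concavity: the dual gap does not contract geometrically, so one must show that the joint evolution of $(x_k,y_k)$ lets the coarse, two-time-scale bound on $\sum_k \delta_k$ still dominate the primal error, and this is precisely what forces the delicate choice of $\eta_x$ as a minimum of two regimes and leads to the $\epsilon^{-6}$ rate rather than $\epsilon^{-4}$.
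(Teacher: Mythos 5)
Your overall architecture matches the paper's: a descent inequality for the Moreau envelope $\psi_\lambda$ with an additive error proportional to the duality gap (your $\delta_k$ coincides with the paper's $\Delta_k=\psi(x_k)-\Psi(x_k,y_k)$), followed by a bound on $\sum_k\Delta_k$ extracted from the proximal ascent step, and a final balancing of stepsizes. The first stage is essentially Lemma~\ref{lem:descent-on-moreau} (the paper realizes it by rewriting $\hat{x}_k=\prox{\lambda\psi}{x_k}$ as a prox of $f$ at an auxiliary point, Lemma~\ref{lem:xhatk-as-prox}, and invoking nonexpansiveness; your prox-optimality route is an acceptable variant), and your identification of where $\Delta_k$ enters via Proposition~\ref{thm:extended-danskin} is correct.

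The genuine gap is in the second stage. In the merely concave case the ascent step yields no per-step contraction of $\Delta_k$; Lemma~\ref{lem:standard-gradient-ascent-estimation} gives $\Psi(x_{k+1},y)-\Psi(x_{k+1},y_{k+1})\le\frac{1}{2\eta_y}(\Vert y-y_k\Vert^2-\Vert y-y_{k+1}\Vert^2)$ only for a \emph{fixed} comparator $y$, so the ``nonnegative progress'' in your recursion is visible only after telescoping against a fixed reference maximizer, whereas the true maximizer $y_k^*$ drifts with $x_k$. Your stated aggregate bound $\sum_{k<K}\delta_k\le\mathcal{O}(L_{\nabla\Phi}D_h^2)+\mathcal{O}(K^2L(L+L_f)\eta_x)$ is what a single comparator over all $K$ iterations delivers; after dividing by $K$ it leaves a term of order $K\eta_xL(L+L_f)$ that grows with $K$ and cannot be pushed below $\epsilon^2/\rho$ simultaneously with $\Delta^*/(\eta_xK)\le\epsilon^2$ unless $\rho\Delta^*L(L+L_f)\lesssim\epsilon^4$, which is a restriction on the problem data rather than a consequence of the stepsize choice. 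Your final display then replaces this by $C_3\rho L(L+L_f)\eta_x$ without the factor $K$, which does not follow from what you derived. The missing device is the block summation of Lemmas~\ref{lem:estimate-Delta} and~\ref{lem:estimate-sum-of-delta}: partition $\{0,\dots,K-1\}$ into blocks of length $B$, use the comparator $y^*_{jB}$ throughout block $j$ so the distance terms telescope to at most $D_h^2/(2\eta_y)$ per block while the drift costs $\eta_xL(L+L_f)B^2$ per block, yielding $\frac{1}{K}\sum_k\Delta_k\le\eta_xL(L+L_f)B+\frac{L_{\nabla\Phi}D_h^2}{2B}+\frac{\Delta_0}{K}$, and then optimize $B\sim\frac{D_h}{L}\sqrt{L_{\nabla\Phi}/\eta_x}$ to obtain the $\rho\sqrt{L_{\nabla\Phi}\eta_x}\,D_h(L+L_f)$ term that the first branch of $\eta_x$ is designed to cancel. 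Without this (or an equivalent restart/amortization argument, which you allude to but do not supply), the claimed complexity is not established. A minor further point: the second branch $\epsilon^2/(\rho L^2)$ of $\eta_x$ controls the $8\rho\eta_xL^2$ term coming from the bounded-subgradient estimate in the descent lemma, not the Lipschitz drift.
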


Similarly to the proofs in~\cite{dima_damek_stoch_weakly_k-4,lin2020gradient} and others, the main descent statement makes use of the quantity $\prox{\lambda \psi}{x_k}$ for a $\lambda>0$. This is somewhat surprising as this point does not appear in the algorithm and can in general not be computed.

But first, we need to establish the fact that $\hat{x}_k := \prox{\lambda \psi}{x_k}$ can also be written as the proximal operator of $f$ evaluated at an auxiliary point.
\begin{lemma}%
  \label{lem:xhatk-as-prox}
  For any $\lambda\in(0,\rho^{-1})$ and all $k\ge0$ the point $\hat{x}_k := \prox{\lambda g}{x_k}$ can also be written for some $v_k\in \partial \phi(\hat{x}_k)$ as
  \begin{equation*}
    \hat{x}_k = \prox{\eta_x f}{\eta_x\lambda^{-1}x_k - \eta_x v_k + (1-\eta_x\lambda^{-1})\hat{x}_k}.
  \end{equation*}
\end{lemma}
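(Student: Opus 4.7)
The plan is to unfold the first-order optimality condition for the proximal operator in two different ways and match them up.

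First I would use the definition $\hat{x}_k = \prox{\lambda\psi}{x_k}$ to write the optimality condition
\begin{equation*}
0 \in \partial\psi(\hat{x}_k) + \tfrac{1}{\lambda}(\hat{x}_k - x_k),
\end{equation*}
and then apply the sum rule $\partial\psi = \partial f + \partial\phi$ at $\hat{x}_k$. This sum rule is the one mildly nontrivial ingredient: $f$ is proper convex lower semicontinuous and $\phi$ is finite-valued and continuous on $\R^d$ (it is $\rho$-weakly convex by Proposition~\ref{thm:extended-danskin} and Lipschitz by Lemma~\ref{lem:max-is-lipschitz}), so standard Moreau--Rockafellar / Fr\'echet subdifferential calculus applies and the sum rule holds with equality. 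Hence there exists $v_k \in \partial\phi(\hat{x}_k)$ such that
\begin{equation*}
-v_k - \tfrac{1}{\lambda}(\hat{x}_k - x_k) \in \partial f(\hat{x}_k).
\end{equation*}

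Second, I would read off what it means for $\hat{x}_k$ to be the proximal point of $\eta_x f$ at some auxiliary $z_k$: by the same optimality calculus, $\hat{x}_k = \prox{\eta_x f}{z_k}$ iff $\tfrac{z_k - \hat{x}_k}{\eta_x}\in\partial f(\hat{x}_k)$. Setting the latter right-hand side equal to $-v_k - \tfrac{1}{\lambda}(\hat{x}_k - x_k)$ and solving for $z_k$ gives
\begin{equation*}
z_k = \hat{x}_k - \eta_x v_k - \tfrac{\eta_x}{\lambda}(\hat{x}_k - x_k) = \tfrac{\eta_x}{\lambda} x_k - \eta_x v_k + \bigl(1 - \tfrac{\eta_x}{\lambda}\bigr)\hat{x}_k,
\end{equation*}
which is exactly the expression in the claim.

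The only step where I expect any friction is justifying the subdifferential sum rule at $\hat{x}_k$, but given the convexity of $f$ and the continuity (in fact Lipschitzness) of $\phi$, this is standard and can be cited from a variational analysis reference; everything else is a one-line algebraic rearrangement of the optimality condition. No choice of $\eta_x$ relative to $\lambda$ is required for the identity itself — it holds as an algebraic equivalence for any $\eta_x>0$.
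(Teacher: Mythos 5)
Your proposal is correct and follows essentially the same route as the paper: write the optimality condition defining $\hat{x}_k = \prox{\lambda\psi}{x_k}$, split $\partial\psi(\hat{x}_k)$ into $\partial\phi(\hat{x}_k)+\partial f(\hat{x}_k)$ using the continuity of $\phi$ (the paper likewise invokes ``the continuity of $\phi$ and subdifferential calculus''), and algebraically rearrange the resulting inclusion into the fixed-point characterization of $\prox{\eta_x f}{\cdot}$. Your closing observation that the identity holds for any $\eta_x>0$ is also accurate.
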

\begin{proof}
  Let $k\ge0$ be arbitrary but fixed and recall that $g=f+\phi$. By the definition of $\hat{x}_k$ we have that
  \begin{equation*}
      0 \in \partial \psi(\hat{x}_k) + \frac1\lambda(\hat{x}_k-x_k) = \partial(\phi+f)(\hat{x}_k) + \frac1\lambda(\hat{x}_k-x_k).
  \end{equation*}
  We can estimate through the continuity of $\phi$ and subdifferential calculus
  \begin{equation*}
    \frac1\lambda(x_k - \hat{x}_k) \in \partial(\phi+f)(\hat{x}_k) \subseteq \partial \phi(\hat{x}_k) + \partial f(\hat{x}_k).
  \end{equation*}
  Thus, there exists $v_k \in \partial \phi(\hat{x}_k)$ such that $\frac1\lambda (x_k - \hat{x}_k) \in v_k + \partial f(\hat{x}_k)$.
  Also,
  \begin{equation*}
    \begin{aligned}
      \frac1\lambda(x_k - \hat{x}_k) \in  \partial f(\hat{x}_k) +  v_k &\Leftrightarrow \frac{\eta_x }{\lambda}x_k - \eta_x v_k + (1-\frac{\eta_x}{\lambda})\hat{x}_k \in \hat{x}_k + \eta_x \partial f(\hat{x}_k) \\
      &\Leftrightarrow \hat{x}_k = \prox{\eta_x f}{\frac{\eta_x}{\lambda}x_k - \eta_x v_k + \big(1-\frac{\eta_x}{\lambda}\big)\hat{x}_k}.
    \end{aligned}
  \end{equation*}
\end{proof}

With the previous lemma in place we can now turn our attention to the first step of the actual convergence proof.

\begin{lemma}%
  \label{lem:descent-on-moreau}
  With $\lambda = \nicefrac{1}{2\rho}$ and $\eta_x \ge 0$ we have for all $k\ge0$ that
  \begin{equation*}
    \psi_\lambda(x_{k+1}) \le \psi_\lambda(x_k) + 2\rho\eta_x\Delta_k - \frac12\eta_x \Vert \nabla \psi_\lambda(x_k) \Vert^2  + 4\rho\eta_x^2L^2,
  \end{equation*}
  where $\Delta_k := \psi(x_k) - \Psi(x_k, y_k) \ge 0$.
\end{lemma}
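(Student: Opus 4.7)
The plan is to follow the Davis--Drusvyatskiy template for descent on the Moreau envelope of a weakly convex function, tailored to the alternating setting where the algorithmic subgradient $g_k \in \partial[\Phi(\cdot,y_k)](x_k)$ is taken at the sub-optimal inner iterate $y_k$ rather than at a maximizer; the mismatch between $g_k$ and a true subgradient of $\phi$ at $x_k$ will be the source of the $\Delta_k$ term. Set $\hat{x}_k := \prox{\lambda\psi}{x_k}$ with $\lambda = 1/(2\rho)$. By the definition of the Moreau envelope, $\psi_\lambda(x_k) = \psi(\hat{x}_k) + \rho\|x_k - \hat{x}_k\|^2$ while $\psi_\lambda(x_{k+1}) \leq \psi(\hat{x}_k) + \rho\|x_{k+1}-\hat{x}_k\|^2$, so
\[
  \psi_\lambda(x_{k+1}) - \psi_\lambda(x_k) \leq \rho\bigl(\|x_{k+1}-\hat{x}_k\|^2 - \|x_k-\hat{x}_k\|^2\bigr),
\]
and Lemma~\ref{lem:grad_smooth_is_prox} gives $\|\nabla \psi_\lambda(x_k)\|^2 = 4\rho^2\|x_k - \hat{x}_k\|^2$. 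The task reduces to bounding $\|x_{k+1} - \hat{x}_k\|^2$.

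For this, invoke Lemma~\ref{lem:xhatk-as-prox} to write $\hat{x}_k = \prox{\eta_x f}{\eta_x\lambda^{-1} x_k - \eta_x v_k + (1-\eta_x\lambda^{-1})\hat{x}_k}$ for some $v_k \in \partial\phi(\hat{x}_k)$. Combined with $x_{k+1} = \prox{\eta_x f}{x_k - \eta_x g_k}$ and the nonexpansiveness of the proximal operator, after squaring,
\[
  \|x_{k+1} - \hat{x}_k\|^2 \leq (1-2\rho\eta_x)^2\|x_k-\hat{x}_k\|^2 - 2\eta_x(1-2\rho\eta_x)\langle x_k-\hat{x}_k,\, g_k - v_k\rangle + \eta_x^2\|g_k - v_k\|^2.
\]

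The main obstacle is the cross-term, since $g_k \notin \partial\phi(x_k)$. I would apply weak convexity twice at two different base points: $v_k \in \partial\phi(\hat{x}_k)$ together with $\rho$-weak convexity of $\phi$ yields $\langle v_k, x_k - \hat{x}_k\rangle \leq \phi(x_k) - \phi(\hat{x}_k) + \tfrac{\rho}{2}\|x_k - \hat{x}_k\|^2$, and $g_k \in \partial[\Phi(\cdot,y_k)](x_k)$ with $\rho$-weak convexity of $\Phi(\cdot,y_k)$ yields $-\langle g_k, x_k - \hat{x}_k\rangle \leq \Phi(\hat{x}_k,y_k) - \Phi(x_k,y_k) + \tfrac{\rho}{2}\|x_k - \hat{x}_k\|^2$. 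Adding these and absorbing the $\hat{x}_k$ terms via the inner-max slackness $\phi(\hat{x}_k) \geq \Phi(\hat{x}_k,y_k) - h(y_k)$ gives precisely
\[
  -\langle x_k - \hat{x}_k,\, g_k - v_k\rangle \leq \Delta_k + \rho\|x_k - \hat{x}_k\|^2,
\]
which is the source of the $\Delta_k$ term. For the stochastic-looking last term, Lemma~\ref{lem:max-is-lipschitz} shows $\phi$ is $L$-Lipschitz and Assumption~\ref{ass:lipschitz} gives the same for $\Phi(\cdot,y_k)$, so Lemma~\ref{lem:lipschitz-bounded-subgradients} forces $\|v_k\|, \|g_k\| \leq L$, hence $\|g_k - v_k\|^2 \leq 4L^2$.

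Plugging everything back, the coefficient of $\|x_k - \hat{x}_k\|^2$ in the bound on $\|x_{k+1}-\hat{x}_k\|^2$ collapses to $(1-2\rho\eta_x)^2 + 2\rho\eta_x(1-2\rho\eta_x) = 1 - 2\rho\eta_x$. Multiplying by $\rho$ and subtracting $\rho\|x_k - \hat{x}_k\|^2$ produces $-2\rho^2\eta_x\|x_k-\hat{x}_k\|^2 = -\tfrac{1}{2}\eta_x\|\nabla\psi_\lambda(x_k)\|^2$, the $\Delta_k$ term contributes $2\rho\eta_x(1-2\rho\eta_x)\Delta_k \leq 2\rho\eta_x \Delta_k$, and the squared-subgradient term adds $4\rho\eta_x^2 L^2$, giving the claim. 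The manipulation requires $(1-2\rho\eta_x) \geq 0$, i.e.\ $\eta_x \leq \lambda = 1/(2\rho)$, a restriction implied by the stepsize schedule of Theorem~\ref{thm:alternating-rate}.
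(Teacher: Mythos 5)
Your proof is correct and follows essentially the same route as the paper's: the Moreau-envelope comparison at $\hat{x}_k = \prox{\lambda\psi}{x_k}$, the rewriting of $\hat{x}_k$ via Lemma~\ref{lem:xhatk-as-prox} plus nonexpansiveness of the prox, the two weak-convexity inequalities anchored at $\hat{x}_k$ and $x_k$ combined with the slackness $\phi(\hat{x}_k)\ge\Phi(\hat{x}_k,y_k)-h(y_k)$ to produce $\Delta_k$, the Lipschitz bound $\|g_k-v_k\|\le 2L$, and the identical algebraic collapse of the coefficient of $\|x_k-\hat{x}_k\|^2$. Your explicit observation that the argument needs $1-2\rho\eta_x\ge 0$ (so that multiplying the cross-term estimate by $2\eta_x\beta$ preserves the inequality) is a point the paper's statement, which only asks $\eta_x\ge 0$, quietly glosses over, and it is indeed satisfied by the stepsizes of Theorem~\ref{thm:alternating-rate}.
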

\begin{proof}
  Let $k\ge0$ be fixed. As before we denote $\hat{x}_k = \prox{\lambda \psi}{x_k}$. From the definition of the Moreau envelope we have that
  \begin{equation}
    \label{eq:moreau-is-min}
    \psi_\lambda(x_{k+1}) = \min_{x\in\R^d} \Big\{ \psi(x) + \frac{1}{2\lambda} \Vert x - x_{k+1} \Vert^2\Big\} \le \psi(\hat{x}_k) + \frac{1}{2\lambda} \Vert \hat{x}_k - x_{k+1} \Vert^2.
  \end{equation}
  Let now $v_k \in \partial\phi(\hat{x}_k)$ as in Lemma~\ref{lem:xhatk-as-prox}. We successively deduce for $\beta:=1-\eta_x \lambda^{-1}$
  \begin{align}
    \Vert \hat{x}_k - x_{k+1} \Vert^2 &= \Vert \prox{\eta_x f}{\eta_x\lambda^{-1}x_k - \eta_x v_k + \beta\hat{x}_k} - \prox{\eta_x f}{x_k - \eta_x g_k}\Vert^2 \label{eq:defin-iter}\\
     &\le \Vert \beta(\hat{x}_k - x_k) + \eta_x (g_k - v_k)\Vert^2 \label{eq:1-Lipschitz}\\
     &= {\beta}^2\Vert \hat{x}_k - x_k \Vert^2 + 2 \eta_x\beta\langle g_k-v_k, \hat{x}_k - x_k\rangle
     + \eta_x^2\Vert g_k - v_k \Vert^2 \notag\\
     &\le {\beta}^2\Vert \hat{x}_k - x_k \Vert^2 + 2 \eta_x \beta\langle g_k-v_k, \hat{x}_k - x_k\rangle + 4\eta_x^2L^2 \label{eq:Lipschitz}
  \end{align}
  where~\eqref{eq:defin-iter} uses Lemma~\ref{lem:xhatk-as-prox} and the definition of $x_{k+1}$, inequality~\eqref{eq:1-Lipschitz} holds because of the nonexpansiveness of the proximal operator, and~\eqref{eq:Lipschitz} follows from the Lipschitz continuity of $\Phi$ and $\phi$ (see Lemma~\ref{lem:max-is-lipschitz}) and the fact that Lipschitz continuity implies bounded subgradients.
  We are left with estimating the inner product in the above inequality and we do so by splitting it into two:
  first of all, from the weak convexity of $\Phi$ in $x$ we have that
  \begin{equation*}
    \begin{aligned}
      \langle g_k, \hat{x}_k - x_k \rangle &\le \Phi(\hat{x}_k, y_k) - \Phi(x_k, y_k) + \frac{\rho}{2} \Vert \hat{x}_k - x_k \Vert^2 \\
      &\le \phi(\hat{x}_k) - \Gamma(x_k,y_k) + \frac{\rho}{2} \Vert \hat{x}_k - x_k \Vert^2.
    \end{aligned}
  \end{equation*}
  Secondly, by the $\rho$-weak convexity of $\phi$
  \begin{equation*}
    -\langle v_k, \hat{x}_k - x_k\rangle \le \phi(x_k) - \phi(\hat{x}_k) + \frac{\rho}{2} \Vert \hat{x}_k - x_k \Vert^2.
  \end{equation*}
  Combining the last two inequalities we get that
  \begin{equation}
    \label{eq:estimate-middle}
    \langle g_k-v_k, \hat{x}_k - x_k\rangle \le \phi(x_k) - \Gamma(x_k,y_k) + \rho \Vert \hat{x}_k - x_k \Vert^2.
  \end{equation}
  Plugging~\eqref{eq:estimate-middle} into~\eqref{eq:Lipschitz} we deduce
  \begin{equation}
    \begin{aligned}
    \label{eq:ab3}
    \Vert \hat{x}_k - x_{k+1} \Vert^2 \le \underbrace{[{(1-\eta_x\lambda^{-1})}^2+2\eta_x(1-\eta_x \lambda^{-1})\rho]}_{= (*)} \Vert \hat{x}_k - x_k \Vert^2 + 2\eta_x \Delta_k + 4\eta_x^2L^2, \\
    \end{aligned}
  \end{equation}
  where we used the fact that $1-\beta\le1$ in the factor of $\Delta_k$.
  Now note that
  \begin{equation}
    \label{eq:terrible-algebra}
    \begin{aligned}
      (*) &= 1 - 2\eta_x \lambda^{-1} + \eta_x^2 \lambda^{-2} + 2\eta_x\rho - 2\eta_x^2 \lambda^{-1}\rho\\
      &= 1 - 4\eta_x\rho + 4 \eta_x^2\rho^2 + 2\eta_x\rho - 4 \eta_x^2\rho^2 = 1-2\eta_x\rho.
    \end{aligned}
  \end{equation}
  Combining~\eqref{eq:moreau-is-min},~\eqref{eq:ab3} and~\eqref{eq:terrible-algebra} we deduce, using $\lambda=\nicefrac{1}{2\rho}$,
  \begin{equation*}
    \label{eq:descent-moreau}
    \begin{aligned}
      \psi_\lambda(x_{k+1}) &\le \psi(\hat{x}_k) + \frac{1}{2\lambda} \left(\Vert \hat{x}_k - x_k \Vert^2 + 2\eta_x\Delta_k -2\eta_x\rho \Vert\hat{x}_k - x_k \Vert^2  + 4\eta_x^2L^2\right) \\
      &= \psi_\lambda(x_k) + 2\rho\eta_x\Delta_k - \frac12\eta_x \Vert \nabla \psi_\lambda(x_k) \Vert^2  + 4\rho\eta_x^2L^2.
    \end{aligned}
  \end{equation*}
\end{proof}

Naturally, we want to telescope the inequality established by the previous lemma. We are left with estimating $\Delta_k$, preferably even in a summable way. But first we need the following technical, yet standard lemma, estimating the amount of increase obtained by a single iteration of gradient ascent.
\begin{lemma}%
  \label{lem:standard-gradient-ascent-estimation}
  It holds for all $y\in\R^n$ and $k\ge0$ that
  \begin{equation}
    \label{eq:gradient-ascent-estimation}
    \Psi(x_{k+1}, y) - \Psi(x_{k+1}, y_{k+1}) \le  \frac{1}{2\eta_y} \Big(\Vert y - y_k \Vert^2 - \Vert y - y_{k+1} \Vert^2 \Big).
  \end{equation}
\end{lemma}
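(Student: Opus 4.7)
The plan is to derive this bound via the standard three-point analysis for proximal-gradient on a concave smooth function, specialized to the stepsize $\eta_y = 1/L_{\nabla\Phi}$ fixed in Theorem~\ref{thm:alternating-rate}. Since only the $y$-variable is updated in the step under consideration and $f(x_{k+1})$ appears on both sides of the target inequality, it suffices to bound $\Phi(x_{k+1},y) - h(y) - \Phi(x_{k+1},y_{k+1}) + h(y_{k+1})$ by the telescoping quantity on the right.

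First I would combine concavity of $\Phi(x_{k+1},\cdot)$ with the descent lemma (which is available because $\Phi(x_{k+1},\cdot)$ is $L_{\nabla\Phi}$-smooth in $y$ by Assumption~\ref{ass:weakly-convex-gradient-lipschitz}(i)) to obtain
\begin{equation*}
  \Phi(x_{k+1},y) - \Phi(x_{k+1},y_{k+1}) \le \langle \nabla_y\Phi(x_{k+1},y_k), y - y_{k+1}\rangle + \frac{L_{\nabla\Phi}}{2}\|y_{k+1}-y_k\|^2.
\end{equation*}
Next I would use the optimality condition of the proximal step defining $y_{k+1}$: there exists $s_{k+1}\in\partial h(y_{k+1})$ with $\nabla_y\Phi(x_{k+1},y_k) = \eta_y^{-1}(y_{k+1}-y_k) + s_{k+1}$. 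Substituting and using the subgradient inequality $\langle s_{k+1}, y - y_{k+1}\rangle \le h(y) - h(y_{k+1})$ turns the last display into
\begin{equation*}
  \Psi(x_{k+1},y) - \Psi(x_{k+1},y_{k+1}) \le \frac{1}{\eta_y}\langle y_{k+1}-y_k, y - y_{k+1}\rangle + \frac{L_{\nabla\Phi}}{2}\|y_{k+1}-y_k\|^2.
\end{equation*}

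Finally I would apply the three-point identity $2\langle y_{k+1}-y_k, y - y_{k+1}\rangle = \|y-y_k\|^2 - \|y-y_{k+1}\|^2 - \|y_{k+1}-y_k\|^2$, yielding
\begin{equation*}
  \Psi(x_{k+1},y) - \Psi(x_{k+1},y_{k+1}) \le \frac{1}{2\eta_y}\big(\|y-y_k\|^2 - \|y-y_{k+1}\|^2\big) + \Big(\frac{L_{\nabla\Phi}}{2} - \frac{1}{2\eta_y}\Big)\|y_{k+1}-y_k\|^2.
\end{equation*}
With the choice $\eta_y = 1/L_{\nabla\Phi}$ the last term vanishes and the claim follows. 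There is no real obstacle here; the only care-worthy point is bookkeeping the sign conventions in the concavity/smoothness pair and correctly reading off the subgradient from the prox optimality condition, which is precisely where the stepsize restriction $\eta_y = 1/L_{\nabla\Phi}$ is used.
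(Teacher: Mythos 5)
Your proposal is correct and follows essentially the same route as the paper: both arguments combine concavity of $\Phi(x_{k+1},\cdot)$, the descent lemma with $\eta_y = 1/L_{\nabla\Phi}$, and the quadratic-growth property of the prox subproblem around its minimizer. The only cosmetic difference is that you unpack that last ingredient via the explicit subgradient from the prox optimality condition plus the three-point identity, whereas the paper invokes the strongly-convex-minimizer inequality directly.
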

\begin{proof}
  This is a standard estimate on the improvement made by a single prox-gradient step for a convex (in this case concave) function, see for example~\cite[Lemma 2.3]{fista}.
\end{proof}

We can now use the previous lemma to estimate $\Delta_k$. Recall also that $y^*_k$ denotes a maximizer of $\Psi(x_k,\cdot)$ for all $k\ge0$.
\begin{lemma}%
  \label{lem:estimate-Delta}
  We have that for all $1 \le m \le k$,
  \begin{equation}
    \begin{aligned}
      \label{eq:estimate-Delta}
      \Delta_k \le 2\eta_x L(L+L_f)(k-m) + \frac{1}{2\eta_y} \Big(\Vert y_{k-1} - y^*_m \Vert^2 - \Vert y_k - y^*_m \Vert^2 \Big).
    \end{aligned}
  \end{equation}
\end{lemma}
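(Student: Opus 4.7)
My plan is to decompose
$\Delta_k = [\psi(x_k) - \Psi(x_k, y_m^*)] + [\Psi(x_k, y_m^*) - \Psi(x_k, y_k)]$
and bound the two brackets separately. The second bracket follows immediately from Lemma~\ref{lem:standard-gradient-ascent-estimation}, applied with $k-1$ in place of $k$ and $y = y_m^*$, yielding exactly the telescopic expression $\frac{1}{2\eta_y}(\Vert y_{k-1}-y_m^* \Vert^2 - \Vert y_k-y_m^* \Vert^2)$ on the right-hand side.

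For the first bracket, I would exploit that $f(x_k)$ cancels, rewriting it as $\phi(x_k) - \Phi(x_k, y_m^*) + h(y_m^*)$. Using the Lipschitz continuity of $\phi$ (Lemma~\ref{lem:max-is-lipschitz}), this is at most $\phi(x_m) + L\Vert x_k - x_m \Vert - \Phi(x_k, y_m^*) + h(y_m^*)$. Substituting $\phi(x_m) = \Phi(x_m, y_m^*) - h(y_m^*)$ (since $y_m^* \in Y(x_m)$) makes $h(y_m^*)$ cancel as well, leaving $\Phi(x_m, y_m^*) - \Phi(x_k, y_m^*) + L\Vert x_k - x_m \Vert \le 2L\Vert x_k - x_m \Vert$ by Assumption~\ref{ass:lipschitz}.

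It remains to bound $\Vert x_k - x_m \Vert \le \eta_x(L+L_f)(k-m)$ by telescoping the single-step estimate $\Vert x_{j+1} - x_j \Vert \le \eta_x(L+L_f)$. This estimate follows from the optimality condition of the proximal step, which gives $x_{j+1} - x_j = -\eta_x(g_j + w_{j+1})$ for some $w_{j+1}\in\partial f(x_{j+1})$, combined with $\Vert g_j \Vert \le L$ and $\Vert w_{j+1} \Vert \le L_f$ via Lemma~\ref{lem:lipschitz-bounded-subgradients}; the indicator-function case in Assumption~\ref{ass:regularizers} is covered analogously using nonexpansiveness of the projection (with $L_f = 0$). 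Multiplying the first-bracket bound by this estimate produces the announced coefficient $2\eta_x L(L+L_f)(k-m)$. The main subtlety is the double cancellation of $f(x_k)$ and $h(y_m^*)$ in the first bracket: it yields the sharp constant $L$ rather than the looser $L+L_f$ one would obtain by bounding $\psi(x_k) - \psi(x_m)$ directly, and this sharpness is what feeds through into the final complexity statement of Theorem~\ref{thm:alternating-rate}.
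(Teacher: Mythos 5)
Your proof is correct and essentially the same as the paper's: the paper also reduces $\Delta_k$ to $\Psi(x_k,y_k^*)-\Psi(x_k,y_m^*)$ plus the telescoping term from Lemma~\ref{lem:standard-gradient-ascent-estimation}, and bounds that difference by $2L\Vert x_k-x_m\Vert \le 2\eta_x L(L+L_f)(k-m)$. Your only cosmetic deviations are invoking the Lipschitz continuity of $\phi$ (Lemma~\ref{lem:max-is-lipschitz}) where the paper inlines the identical insert-$\Phi(x_m,y_k^*)$-and-use-optimality computation, and deriving the per-step bound $\Vert x_{j+1}-x_j\Vert\le\eta_x(L+L_f)$ from the prox optimality condition rather than from nonexpansiveness together with~\eqref{eq:bound-prox-f}.
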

\begin{proof}
  Plugging $y=y^*_m$ into~\eqref{eq:gradient-ascent-estimation} we deduce that
  \begin{equation}
    \label{eq:standard_descent_with_ystar}
    \begin{aligned}
       0 \le \Psi(x_k, y_k) - \Psi(x_k, y^*_m) + \frac{1}{2\eta_y} \Big(\Vert y^*_m - y_{k-1} \Vert^2 - \Vert y^*_m - y_k \Vert^2 \Big).
    \end{aligned}
  \end{equation}
  Starting from the definition of $\Delta_k = \Psi(x_k,y^*_k) - \Psi(x_k,y_k)$, we add~\eqref{eq:standard_descent_with_ystar} to obtain
  \begin{equation}
    \label{eq:diff-function-values}
    \begin{aligned}
      \Delta_k 
      &\le \Psi(x_k,y^*_k) - \Psi(x_k,y_m^*) + \frac{1}{2\eta_y} \Big(\Vert y^*_m - y_{k-1} \Vert^2 - \Vert y^*_m - y_k \Vert^2 \Big).
    \end{aligned}
  \end{equation}
  Due to the Lipschitz continuity of $\Phi$, terms which only differ in their first argument will be easy to estimate. Therefore, we insert and subtract $\Phi(x_m,y_{k}^*)$ to deduce
  \begin{equation}
    \label{eq:plus-minus-functionval}
    \begin{aligned}
      \MoveEqLeft \Psi(x_k, y^*_k) - \Psi(x_k, y^*_m)  \\ 
      &= \Phi(x_k, y^*_k) - \Phi(x_m, y^*_k) + \Phi(x_m, y^*_k)-h(y^*_k) - \Phi(x_k, y^*_m) + h(y^*_m) \\
      &\le \Phi(x_k, y^*_k) - \Phi(x_m, y^*_k) + \Phi(x_m, y^*_m)-h(y^*_m) - \Phi(x_k, y^*_m) + h(y^*_m)\\
      &= \Phi(x_k,y^*_k) - \Phi(x_m,y^*_k) + \Phi(x_m,y^*_m) - \Phi(x_k, y^*_m).
    \end{aligned}
  \end{equation}
  We estimate the above expression for $k>m$ by making use of the Lipschitz continuity of $\Phi(\cdot, y)$ and~\eqref{eq:bound-prox-f} deducing
  \begin{equation}
    \label{eq:fun-val-same-y}
    \begin{aligned}
      \MoveEqLeft \Phi(x_k, y^*_k) - \Phi(x_m, y^*_k)  \le L \Vert x_k - x_m \Vert \le L \sum_{l=m}^{k-1} \Vert x_{l+1}-x_l \Vert \\
      &\le L \sum_{l=m}^{k-1} \Big( \Vert \prox{\eta_x f}{x_l -\eta_x g_l} - \prox{\eta_x f}{x_l} \Vert + \Vert \prox{\eta_x f}{x_l} - x_l \Vert \Big)\\
      & \le \eta_x L(L+L_f)(k-m).
    \end{aligned}
  \end{equation}
  For $k=m$ the inequality follows trivially. Analogously, we deduce
  \begin{equation}
    \label{eq:fun-val-same-y-2}
      \Phi(x_m, y^*_m) - \Phi(x_k, y^*_m) \le \eta_x L(L+L_f)(k-m).
  \end{equation}
  Plugging~\eqref{eq:plus-minus-functionval},~\eqref{eq:fun-val-same-y} and~\eqref{eq:fun-val-same-y-2} into~\eqref{eq:diff-function-values} gives the statement of the lemma.
\end{proof}

In order to estimate the summation of $\Delta_k$ we will use a trick to sum over it in blocks, where the size $B$ of these blocks will depend on the total number of iterations $K$. Note that w.l.o.g.\ we assume that the block size $B \le K$ divides $K$ without remainder.

\begin{lemma}%
  \label{lem:estimate-sum-of-delta}
  It holds that for all $K\ge1$
  \begin{equation}
    \label{eq:estimate-sum-of-delta}
    \frac{1}{K} \sum_{k=0}^{K-1} \Delta_k \le \eta_x L(L+L_f) B + \frac{L_{\nabla\Phi}D_h^2}{2B} + \frac{\Delta_0}{K}.
  \end{equation}
\end{lemma}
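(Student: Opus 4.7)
The plan is to partition the index range $\{0,1,\ldots,K-1\}$ into the singleton $\{0\}$ (handled trivially, contributing the $\Delta_0/K$ term) together with $K/B$ consecutive blocks of size $B$, and apply Lemma~\ref{lem:estimate-Delta} inside each block using the block's starting index as the parameter $m$. The per-block bound will split into an ``error'' term quadratic in $B$ (from the $k-m$ factor) and a telescoping term in $\Vert y_{\cdot} - y^*_m \Vert^2$.

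More concretely, for $j = 0, 1, \ldots, K/B - 1$ write the $j$-th block as $\{jB+1, jB+2, \ldots, jB+B\}$, choose $m_j := jB+1$, and sum the inequality from Lemma~\ref{lem:estimate-Delta} over $k$ in this block. The first summand contributes
\begin{equation*}
  \sum_{k=m_j}^{m_j+B-1} 2\eta_x L(L+L_f)(k-m_j) = \eta_x L(L+L_f)\, B(B-1) \le \eta_x L(L+L_f)\, B^2,
\end{equation*}
while the second summand telescopes to
\begin{equation*}
  \frac{1}{2\eta_y}\bigl( \Vert y_{m_j - 1} - y^*_{m_j}\Vert^2 - \Vert y_{m_j + B - 1} - y^*_{m_j}\Vert^2\bigr) \le \frac{D_h^2}{2\eta_y},
\end{equation*}
where the bound uses Assumption~\ref{ass:regularizers}(ii), the fact that all iterates $y_k$ lie in $\dom h$ (as outputs of $\prox{\eta_y h}{\cdot}$), and that each $y^*_{m_j} \in \dom h$ since it maximizes $\Phi(x_{m_j},\cdot) - h(\cdot)$.

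Summing the per-block bounds over the $K/B$ blocks, adding the separated $\Delta_0$ term, dividing by $K$, and finally substituting $\eta_y = 1/L_{\nabla\Phi}$ then yields
\begin{equation*}
  \frac{1}{K} \sum_{k=0}^{K-1} \Delta_k \le \frac{1}{K}\cdot \frac{K}{B}\cdot \eta_x L(L+L_f) B^2 + \frac{1}{K}\cdot \frac{K}{B}\cdot \frac{L_{\nabla\Phi} D_h^2}{2} + \frac{\Delta_0}{K},
\end{equation*}
which simplifies to the claimed inequality. The only mildly delicate point is making sure the block decomposition is consistent with the constraint $m\ge 1$ in Lemma~\ref{lem:estimate-Delta} and that the ``boundary'' quantities $\Vert y_{m_j-1} - y^*_{m_j}\Vert$ are legitimately bounded by $D_h$; both are taken care of by starting the blocks at indices $\ge 1$ and relying on the feasibility of all $y$-iterates. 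No further obstacle is expected, since the essence of the argument is the already established per-step Lemma~\ref{lem:estimate-Delta} combined with a routine blocking/telescoping trick.
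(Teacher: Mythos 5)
Your proof is correct and follows essentially the same blocking-and-telescoping argument as the paper, which uses blocks $\{jB,\dots,(j+1)B-1\}$ with $m=jB$ for $j>0$ (and $m=1$ for the first block, absorbing $\Delta_0$ there) rather than your shifted blocks $\{jB+1,\dots,jB+B\}$; the difference is purely cosmetic. The only tiny bookkeeping slip is that your ``singleton plus $K/B$ blocks of size $B$'' decomposition covers $K+1$ indices rather than $K$, but since $\Delta_k\ge 0$ this only loosens the bound harmlessly.
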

\begin{proof}
  By splitting the summation into blocks we get that
  \begin{equation}
    \label{eq:split--summation-into-blocks}
    \sum_{k=0}^{K-1} \Delta_k  = \sum_{j=0}^{K/B-1} \ \sum_{k=jB}^{(j+1)B-1} \Delta_k.
  \end{equation}
  By using~\eqref{eq:estimate-Delta} from Lemma~\ref{lem:estimate-Delta} with $j>0$ and $m=jB$ and the fact that $\sum_{k=1}^{B-1} k \le \nicefrac{B^2}{2}$ we have
  \begin{equation}
    \label{eq:sum-over-any-block}
    \begin{aligned}
      \sum_{k=jB}^{(j+1)B-1} \Delta_k &\le \eta_x L(L+L_f) B^2 + \frac{1}{2\eta_y} \Vert y_{jB-1}-y^*_{jB} \Vert^2 \\
    \end{aligned}
  \end{equation}
  where the last term can be bounded by $D_h^2$, which was defined in Assumption~\ref{ass:regularizers} and denotes the diameter of $\dom h$.
  We do the same for the case $j=0$ but choose here $m=1$ and have separate out the first summand of $\sum_{k=0}^{B-1} \Delta_k$ as Lemma~\ref{lem:estimate-Delta} does not hold for $k=0$ and therefore get an extra $\Delta_0$ summand.
  where $D_h$ was defined in Assumption~\ref{ass:regularizers} and denotes the diameter of $\dom h$.
  Plugging~\eqref{eq:sum-over-any-block} into~\eqref{eq:split--summation-into-blocks} gives
  \begin{equation*}
    \frac{1}{K} \sum_{k=0}^{K-1} \Delta_k \le \eta_x L(L+L_f) B + \frac{1}{2\eta_y B}D_h^2 + \frac{\Delta_0}{K}.
  \end{equation*}
  The desired statement is obtained by using the step size $\eta_y=\nicefrac{1}{L_{\nabla\Phi}}$.
\end{proof}

\begin{proof}[Proof of Theorem~\ref{thm:alternating-rate}]
  From Lemma~\ref{lem:descent-on-moreau} we deduce by summing up
  \begin{equation*}
    \psi_\lambda(x_{K}) \le \psi_\lambda(x_0) + 2\eta_x\rho\sum_{k=0}^{K-1} \Delta_k - \frac{1}{2}\eta_x\sum_{k=0}^{K-1} \Vert\nabla \psi_\lambda(x_k)\Vert^2 + 4K\rho\eta_x^2L^2.
  \end{equation*}
  Next, we divide by $K$ and obtain that
  \begin{equation*}
    \frac{1}{K}\sum_{k=0}^{K-1}\Vert \nabla \psi_\lambda(x_k) \Vert^2  \le 2\frac{\Delta^*}{\eta_x K} + \frac{4\rho}{K}\sum_{k=0}^{K-1} \Delta_k + 8\rho\eta_x L^2.
  \end{equation*}
  Now, we plug in~\eqref{eq:estimate-sum-of-delta} to deduce that
  \begin{equation*}
    \frac{1}{K}\sum_{k=0}^{K-1}\Vert \nabla \psi_\lambda(x_k) \Vert^2  \le 2\frac{\Delta^*}{\eta_x K} + 4\rho\Big( \eta_x L(L+L_f) B + \frac{L_{\nabla\Phi}D_h^2}{2B} \Big) + \frac{4\rho\Delta_0}{K} + 8\eta_x\rho L^2.
  \end{equation*}
  With $B= \frac{D_h}{L}\sqrt{\frac{L_{\nabla\Phi}}{\eta_x}}$, we have that
  \begin{equation*}
    \frac{1}{K}\sum_{k=0}^{K-1}\Vert \nabla \psi_\lambda(x_k) \Vert^2 \le 2\frac{\Delta^*}{\eta_x K} + 6\rho\sqrt{L_{\nabla\Phi}\eta_x}D_h(L+L_f) + \frac{4\rho\Delta_0}{K} + 8\eta_x\rho L^2.
  \end{equation*}
  By plugging in the step size described in the statement of the theorem we obtain
  \begin{equation*}
    \frac{1}{K}\sum_{k=0}^{K-1}\Vert \nabla \psi_\lambda(x_k) \Vert^2 \le 2\frac{\Delta^*}{K}\max\left\{\frac{L^2 \rho}{\epsilon^2}, \frac{L_{\nabla\Phi}\rho^2 D_h^2 {(L+L_f)}^2}{\epsilon^4}\right\} + \frac{4\rho\Delta_0}{K} + 14 \epsilon^2.
  \end{equation*}
  proving the desired complexity result.
\end{proof}

\subsection{Stochastic setting}%
\label{sub:stochastic}

For initial values $(x_0,y_0)\in \dom f \times \dom h$ the stochastic version of alternating GDA is given by
\begin{equation}%
  \label{eq:gda-non-strongly-stoch}
  (\forall k\ge 0)
  \left\lfloor
    \begin{array}{l}
      x_{k+1} = \prox{\eta_x f}{x_k - \eta_x g_k^\xi} \\
      y_{k+1} = \prox{\eta_y h}{y_k + \eta_y \nabla_y\Phi(x_{k+1},y_k;\zeta_k)},
    \end{array}\right.
\end{equation}
for $g_k^\xi \in \partial[\Phi(\cdot,y_k;\xi_k)](x_k)$ for $\xi_k,\zeta_k\sim\mathcal{D}$ independent from all previous iterates.\\

\begin{theorem}%
  \label{thm:alternating-rate-stoch}
  Let in addition to the assumptions of Theorem~\ref{thm:alternating-rate} also Assumption~\ref{ass:unbiased} and~\ref{ass:bounded-variance} hold true. For algorithm~\eqref{eq:gda-non-strongly-stoch} with step sizes
  \begin{equation*}
  \eta_x = \min \left\{\frac{\epsilon^2}{\rho(L^2+\sigma^2)}, \frac{\epsilon^4}{\rho^2 L(L+L_f+\sigma)D_h^2 L_{\nabla\Phi}}, \frac{\epsilon^6}{\rho^3L(L+L_f+\sigma)\sigma^2D_h^2 }\right\},
  \end{equation*}
  $\eta_y= \min\{\frac{1}{2 L_{\nabla\Phi}}, \frac{\epsilon^2}{\rho \sigma^2}\}$ and $\lambda=\frac{1}{2\rho}$ the number of stochastic gradient evaluations $K$ required is
  \begin{equation*}
      \mathcal{O}\left(\frac{\Delta^*(L^2+L_f^2+\sigma^2)\rho}{\epsilon^4}\max\Big\{1, \frac{\rho L_{\nabla\Phi}LD_h^2}{\epsilon^2}, \frac{\rho^2 D_h^2 \sigma^2}{\epsilon^4}\Big\}+ \frac{\Delta_0\rho}{\epsilon^2}\right),
  \end{equation*}
  where $\Delta^* = \psi(x_0) - \inf_{x\in\R^d} \psi(x)$ and $\Delta_0=\psi(x_0)-\Psi(x_0,y_0)$, to visit an $\epsilon$-stationary point in expectation such that $\min_{1\le k \le K}\E[\Vert \nabla \psi_\lambda(x_k) \Vert] \le \epsilon$.
\end{theorem}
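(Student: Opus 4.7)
The plan is to adapt the four-lemma chain of the deterministic proof (Lemmas~\ref{lem:descent-on-moreau}--\ref{lem:estimate-sum-of-delta} feeding into Theorem~\ref{thm:alternating-rate}) to the filtration $\mathcal{F}_k := \sigma(\{\xi_j,\zeta_j\}_{j<k})$, under which $x_k, y_k$ are measurable while the fresh samples $\xi_k, \zeta_k$ are independent. The structure of each lemma is preserved; the work is in tracking three additional variance contributions, arising from the $x$-subgradient $g_k^\xi$, the $y$-gradient $\nabla_y\Phi(x_{k+1},y_k;\zeta_k)$, and their interplay with the $\mathcal{F}_k$-measurable target $y_m^*$. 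The main obstacle is then the three-way balance between $\eta_x$, $\eta_y$ and the block size $B$ at the end.

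For the analogue of Lemma~\ref{lem:descent-on-moreau}, I decompose $g_k^\xi = \tilde g_k + (g_k^\xi - \tilde g_k)$ with $\tilde g_k := \E[g_k^\xi \mid \mathcal{F}_k] \in \partial[\Phi(\cdot, y_k)](x_k)$, which satisfies $\Vert\tilde g_k\Vert \le L$ by Lemma~\ref{lem:lipschitz-bounded-subgradients}. Assumption~\ref{ass:bounded-variance} then yields $\E[\Vert g_k^\xi - v_k\Vert^2 \mid \mathcal{F}_k] \le 4L^2 + \sigma^2$, and the inner-product term collapses to $\E[\langle g_k^\xi - v_k, \hat x_k - x_k\rangle \mid \mathcal{F}_k] = \langle \tilde g_k - v_k, \hat x_k - x_k\rangle$, which is bounded exactly as in the deterministic proof. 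The descent inequality of Lemma~\ref{lem:descent-on-moreau} therefore survives in conditional expectation with $4L^2$ replaced by $4L^2 + \sigma^2$.

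For the analogue of Lemma~\ref{lem:standard-gradient-ascent-estimation}, repeating the strong-convexity argument with $g^\zeta := \nabla_y\Phi(x_{k+1},y_k;\zeta_k)$ in place of the true gradient $g$ produces an extra residual $\langle g^\zeta - g, y_{k+1} - y\rangle$. Splitting this as $\langle g^\zeta-g, y_{k+1}-y_k\rangle + \langle g^\zeta-g, y_k-y\rangle$, Young's inequality bounds the first summand by $\eta_y\Vert g^\zeta-g\Vert^2 + \tfrac{1}{4\eta_y}\Vert y_{k+1}-y_k\Vert^2$; the reduced stepsize $\eta_y\le 1/(2L_{\nabla\Phi})$ leaves a coefficient $\tfrac{1}{4\eta_y}$ in the descent lemma to absorb the quadratic term, while the second inner product vanishes in conditional expectation provided $y$ is $\mathcal{F}_k$-measurable (in particular for $y=y_m^*$ with $m\le k$). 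The resulting analogue of~\eqref{eq:gradient-ascent-estimation} carries an additional $\eta_y\sigma^2$ per iteration.

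Finally, the $\Delta_k$ estimate of Lemma~\ref{lem:estimate-Delta} transfers in expectation using $\E[\Vert g_l^\xi\Vert]\le L+\sigma$ along the Lipschitz telescope (this is where the $L+L_f+\sigma$ factor appears), and the blocking argument of Lemma~\ref{lem:estimate-sum-of-delta} then gives
\begin{equation*}
\frac{1}{K}\sum_{k=0}^{K-1}\E[\Delta_k] \le \eta_x L(L+L_f+\sigma)B + \eta_y\sigma^2 + \frac{D_h^2}{2\eta_y B} + \frac{\Delta_0}{K},
\end{equation*}
where note that the $\eta_y\sigma^2$ contribution appears once per iteration and is therefore $B$-independent after averaging. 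Optimizing $B\sim D_h(\eta_x\eta_y L(L+L_f+\sigma))^{-1/2}$ and substituting into the modified descent inequality leaves four error terms. Forcing each to be $\le\epsilon^2$ recovers: (i) the first $\eta_x$ constraint from the descent variance $\rho\eta_x(L^2+\sigma^2)$; (ii) the second $\eta_x$ constraint from balancing the blocking term when $\eta_y=1/(2L_{\nabla\Phi})$; (iii) the third $\eta_x$ constraint from the same balance when $\eta_y=\epsilon^2/(\rho\sigma^2)$; and (iv) the second regime of $\eta_y$ from $\rho\eta_y\sigma^2\le\epsilon^2$. The complexity $K\gtrsim \Delta^*/(\eta_x\epsilon^2)$ follows by taking the worst of the three $\eta_x$ constraints, yielding the stated bound.
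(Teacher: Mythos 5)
Your proposal is correct and follows essentially the same route as the paper: the same four-lemma chain adapted in (conditional) expectation, the same bias--variance treatment of $g_k^\xi$, the same Young's-inequality absorption of the martingale term via the halved stepsize $\eta_y\le 1/(2L_{\nabla\Phi})$, the same per-iteration $\eta_y\sigma^2$ surcharge in the blocking bound, and the same choice $B= D_h\sqrt{1/(\eta_x\eta_y L(L+L_f+\sigma))}$ with the identical matching of the four error terms to the stated stepsize regimes. The only cosmetic difference is your slightly sharper constant $4L^2+\sigma^2$ versus the paper's $4(L^2+\sigma^2)$ for $\E\Vert g_k^\xi-v_k\Vert^2$, which is immaterial to the complexity.
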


The proof proceeds along the same lines of the deterministic case. Similarly we show an adapted version of Lemma~\ref{lem:descent-on-moreau}.

\begin{lemma}%
  \label{lem:stoch-descent-on-moreau}
  With $\lambda = \nicefrac{1}{2\rho}$ we have for all $k\ge0$ that
  \begin{equation*}
    \Exp{\psi_\lambda(x_{k+1})} \le \Exp{\psi_\lambda(x_k)} + 2\rho\eta_x\hat{\Delta}_k - \frac{\eta_x}{2}\Exp{\Vert \nabla \psi_\lambda(x_k) \Vert^2} + 4\rho\eta_x^2(L^2+\sigma^2)
  \end{equation*}
  where $\hat{\Delta}_k := \Exp{\psi(x_k) - \Psi(x_k, y_k)}$.
\end{lemma}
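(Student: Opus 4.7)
The proof will follow the same blueprint as Lemma~\ref{lem:descent-on-moreau}, but with expectations handled carefully via the tower property together with Assumptions~\ref{ass:unbiased} and~\ref{ass:bounded-variance}. Let $\mathcal{F}_k$ denote the $\sigma$-algebra generated by $(x_0,y_0,\xi_0,\zeta_0,\dots,\xi_{k-1},\zeta_{k-1})$, so that $x_k,y_k$ and hence $\hat{x}_k := \prox{\lambda\psi}{x_k}$ are $\mathcal{F}_k$-measurable, while $g_k^\xi$ depends additionally on $\xi_k$. First I would set $\bar{g}_k := \Exp{g_k^\xi \mid \mathcal{F}_k} \in \partial[\Phi(\cdot,y_k)](x_k)$ by Assumption~\ref{ass:unbiased}, and invoke Lemma~\ref{lem:xhatk-as-prox} to pick a deterministic $v_k \in \partial \phi(\hat{x}_k)$ (with respect to $\bar{g}_k$ playing the role of the deterministic subgradient) such that $\hat{x}_k = \prox{\eta_x f}{\eta_x\lambda^{-1}x_k - \eta_x v_k + (1-\eta_x\lambda^{-1})\hat{x}_k}$.

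Next I would bound $\|\hat{x}_k - x_{k+1}\|^2$ exactly as in the deterministic proof. Writing $\beta = 1 - \eta_x\lambda^{-1}$ and using nonexpansiveness of $\prox{\eta_x f}{\cdot}$,
\begin{equation*}
\|\hat{x}_k - x_{k+1}\|^2 \le \beta^2 \|\hat{x}_k - x_k\|^2 + 2\eta_x\beta \langle g_k^\xi - v_k, \hat{x}_k - x_k \rangle + \eta_x^2 \|g_k^\xi - v_k\|^2.
\end{equation*}
Taking conditional expectation given $\mathcal{F}_k$ and using $\Exp{g_k^\xi \mid \mathcal{F}_k} = \bar{g}_k$ for the cross term, and the bias-variance decomposition $\Exp{\|g_k^\xi - v_k\|^2 \mid \mathcal{F}_k} = \Exp{\|g_k^\xi - \bar{g}_k\|^2 \mid \mathcal{F}_k} + \|\bar{g}_k - v_k\|^2$, I would apply Assumption~\ref{ass:bounded-variance} together with the fact that $\|\bar{g}_k\| \le L$ and $\|v_k\| \le L$ by Lemma~\ref{lem:lipschitz-bounded-subgradients} (using Lemma~\ref{lem:max-is-lipschitz}) to obtain $\Exp{\|g_k^\xi - v_k\|^2 \mid \mathcal{F}_k} \le \sigma^2 + 4L^2 \le 4(L^2+\sigma^2)$. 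This yields
\begin{equation*}
\Expcond{\|\hat{x}_k - x_{k+1}\|^2}{\mathcal{F}_k} \le \beta^2 \|\hat{x}_k - x_k\|^2 + 2\eta_x\beta \langle \bar{g}_k - v_k, \hat{x}_k - x_k \rangle + 4\eta_x^2(L^2+\sigma^2).
\end{equation*}

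Now the inner product involves the deterministic subgradient $\bar{g}_k \in \partial[\Phi(\cdot,y_k)](x_k)$ and $v_k \in \partial \phi(\hat{x}_k)$, so I can reuse the weak convexity argument verbatim from the proof of Lemma~\ref{lem:descent-on-moreau}, giving $\langle \bar{g}_k - v_k, \hat{x}_k - x_k\rangle \le \phi(x_k) - \Gamma(x_k,y_k) + \rho\|\hat{x}_k - x_k\|^2$. Plugging this in and performing the same algebraic identity $\beta^2 + 2\eta_x\beta\rho = 1 - 2\eta_x\rho$ with $\lambda = 1/(2\rho)$, then combining with $\psi_\lambda(x_{k+1}) \le \psi(\hat{x}_k) + \frac{1}{2\lambda}\|\hat{x}_k - x_{k+1}\|^2$ and using $\psi(\hat{x}_k) + \frac{1}{2\lambda}\|\hat{x}_k - x_k\|^2 = \psi_\lambda(x_k)$ along with $\|\nabla\psi_\lambda(x_k)\|^2 = (2\rho)^2 \|\hat{x}_k - x_k\|^2$, gives the bound conditionally on $\mathcal{F}_k$ with $\Delta_k := \psi(x_k) - \Psi(x_k,y_k)$ inside. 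Applying the tower property and noting that $\hat{\Delta}_k = \Exp{\Delta_k}$ finishes the proof.

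The only subtle point, which is really the same trick that made the deterministic proof work, is that the artificial point $\hat{x}_k$ is $\mathcal{F}_k$-measurable, so the cross term linearizes nicely under conditional expectation; once this is in place, no further stochastic estimates beyond the standard variance bound are needed.
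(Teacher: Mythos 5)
Your proposal is correct and follows essentially the same route as the paper: expand $\Vert \hat{x}_k - x_{k+1}\Vert^2$ via nonexpansiveness of the prox as in the deterministic Lemma~\ref{lem:descent-on-moreau}, condition on the past so that the cross term linearizes to the mean subgradient $\bar g_k \in \partial[\Phi(\cdot,y_k)](x_k)$, reuse the weak-convexity estimate~\eqref{eq:estimate-middle}, and absorb the variance into the $4\eta_x^2(L^2+\sigma^2)$ term. Your bias--variance decomposition of $\Exp{\Vert g_k^\xi - v_k\Vert^2\mid\mathcal{F}_k}$ is a marginally sharper (but equivalent up to constants) version of the paper's bound $\Exp{\Vert g_k^\xi\Vert^2}\le L^2+\sigma^2$, and your explicit treatment of measurability only makes precise what the paper leaves implicit.
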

\begin{proof}
  Let $k\ge0$ be arbitrary but fixed. It follows easily from~\eqref{eq:bounded-variance-subgradient} that
  \begin{equation}
    \label{eq:steiner}
    \Exp{\Vert g_k^\xi \Vert^2} \le \Exp{\Vert g_k \Vert^2} + \sigma^2 \le L^2 + \sigma^2,
  \end{equation}
  where $\Exp{g_k^\xi}=g_k \in \partial_x\Phi(x_k,y_k)$.
  The definition of the Moreau envelope yields
  \begin{equation}
    \label{eq:defin-moreau}
    \Exp{g_\lambda(x_{k+1})} \le \Exp{\psi(\hat{x}_k)} + \frac{1}{2\lambda}\Exp{\Vert \hat{x}_k - x_{k+1} \Vert^2}.
  \end{equation}
  Similarly to Lemma~\ref{lem:descent-on-moreau} we deduce that for $v_k\in \partial\phi(\hat{x}_k)$ (as given in Lemma~\ref{lem:xhatk-as-prox}) and $\beta=1-\eta_x \lambda^{-1}$
  \begin{equation*}
    \begin{aligned}
      \Vert\hat{x}_k - x_{k+1} \Vert^2 
       &= {\beta}^2\Vert \hat{x}_k - x_k \Vert^2 + 2 \eta_x \beta\langle g_k^\xi - v_k, \hat{x}_k - x_k\rangle
       + \eta_x^2\Vert g_k^\xi - v_k \Vert^2.
    \end{aligned}
  \end{equation*}
  By applying the conditional expectation $\Expcond{\cdot}{x_k,y_k}$, then the unconditional one and using~\eqref{eq:steiner}, we get that
  \begin{equation*}
    \Exp{\Vert \hat{x}_k - x_{k+1} \Vert^2} \le \Exp{\Vert \hat{x}_k - x_k \Vert^2 + 2 \eta_x(1-\frac{\eta_x}{\lambda}) \langle g_k-v_k, \hat{x}_k - x_k\rangle} + 4\eta_x^2(L^2+\sigma^2).
  \end{equation*}
  where $g_k= \Exp{g_k^\xi}$.
  Lastly, we combine the above inequality with~\eqref{eq:defin-moreau} and the estimate for the inner product~\eqref{eq:estimate-middle} as in Lemma~\ref{lem:descent-on-moreau} to deduce the statement of the lemma.
\end{proof}

Next, we discuss the stochastic version of Lemma~\ref{lem:standard-gradient-ascent-estimation}. It is clear that we cannot expect the same amount of function value increase by a single iteration of gradient ascent if we do not use the exact gradient.

\begin{lemma}%
  \label{lem:stoch-standard-gradient-ascent-estimation}
  With $\eta_y\le \nicefrac{1}{2L_{\nabla\Phi}}$ we have for all $k\ge0$ and all $y \in \R^n$
  \begin{equation}
    \label{eq:stoch-gradient-ascent-estimation}
    \begin{aligned}
      \MoveEqLeft \Exp{\Psi(x_{k+1}, y) - \Psi(x_{k+1}, y_{k+1})} \le  \frac{1}{2\eta_y} \Big(\Exp{\Vert y^*_m - y_k \Vert^2 - \Vert y^*_m - y_{k+1} \Vert^2} \Big) + \eta_y\sigma^2.
    \end{aligned}
  \end{equation}
\end{lemma}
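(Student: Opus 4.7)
The plan is to mimic the argument of Lemma~\ref{lem:standard-gradient-ascent-estimation}, but carefully separate the stochastic gradient $\tilde g_k := \nabla_y\Phi(x_{k+1},y_k;\zeta_k)$ from its conditional mean $g_k := \nabla_y\Phi(x_{k+1},y_k)$, so that the only net cost of going stochastic is a $\sigma^2$ term. First I would write down the three-point inequality produced by $y_{k+1}$ being the minimizer of the $\tfrac{1}{\eta_y}$-strongly convex proximal subproblem (with $\tilde g_k$ in place of the true gradient). Then, as in the deterministic proof, I would combine this with the descent lemma in ascent form $\Phi(x_{k+1},y_{k+1}) \geq \Phi(x_{k+1},y_k)+\langle g_k, y_{k+1}-y_k\rangle - \tfrac{L_{\nabla\Phi}}{2}\Vert y_{k+1}-y_k\Vert^2$ and with concavity of $\Phi(x_{k+1},\cdot)$ giving $\Phi(x_{k+1},y) \le \Phi(x_{k+1},y_k)+\langle g_k, y-y_k\rangle$.

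Next, I would write $\langle\tilde g_k,\cdot\rangle = \langle g_k,\cdot\rangle + \langle \tilde g_k - g_k,\cdot\rangle$ and split the noise contribution as
\begin{equation*}
\langle \tilde g_k-g_k, y_{k+1}-y\rangle = \langle \tilde g_k-g_k, y_{k+1}-y_k\rangle + \langle \tilde g_k-g_k, y_k-y\rangle.
\end{equation*}
The second piece vanishes in conditional expectation, because, taking $y=y^*_m$ with $m\le k$ (as used in the subsequent application), $y_k-y^*_m$ is measurable with respect to the $\sigma$-algebra generated by $\zeta_0,\dots,\zeta_{k-1}$, whereas $\tilde g_k$ is conditionally unbiased for $g_k$ by Assumption~\ref{ass:unbiased}. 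For the first piece I would apply Young's inequality with parameter $2\eta_y$, obtaining
\begin{equation*}
\langle \tilde g_k-g_k, y_{k+1}-y_k\rangle \le \eta_y\Vert \tilde g_k - g_k\Vert^2 + \frac{1}{4\eta_y}\Vert y_{k+1}-y_k\Vert^2.
\end{equation*}

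The crucial bookkeeping step is then to collect all the $\Vert y_{k+1}-y_k\Vert^2$ coefficients: the proximal strong-convexity inequality contributes $-\tfrac{1}{2\eta_y}$, the descent lemma contributes $+\tfrac{L_{\nabla\Phi}}{2}$, and Young contributes $+\tfrac{1}{4\eta_y}$. Under the stepsize restriction $\eta_y \le \tfrac{1}{2L_{\nabla\Phi}}$ we have $\tfrac{L_{\nabla\Phi}}{2} \le \tfrac{1}{4\eta_y}$, so the total coefficient is nonpositive and the term can be dropped. Taking unconditional expectation, using $\Exp{\Vert \tilde g_k-g_k\Vert^2}\le\sigma^2$ from Assumption~\ref{ass:bounded-variance}, and rearranging (with $f(x_{k+1})$ added on both sides to reconstitute $\Psi$) yields exactly~\eqref{eq:stoch-gradient-ascent-estimation}.

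The only real obstacle is the correct identification of the telescoping quantity: one must evaluate at a point $y^*_m$ that is $\mathcal{F}_k$-measurable (which is why the lemma is formulated with $y^*_m$ on the right-hand side and is subsequently applied with $m\le k$), otherwise the mean-zero argument for $\langle \tilde g_k-g_k, y_k-y\rangle$ fails. Everything else is a careful matching of constants, for which the choice $\eta_y\le 1/(2L_{\nabla\Phi})$ is tuned precisely to make the $\Vert y_{k+1}-y_k\Vert^2$ coefficients cancel.
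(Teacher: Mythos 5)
Your proposal is correct and follows essentially the same route as the paper's proof: the three-point inequality from the strongly convex proximal subproblem, insertion of $\nabla_y\Phi(x_{k+1},y_k)$ to isolate the noise, Young's inequality with the $\eta_y\Vert\cdot\Vert^2 + \tfrac{1}{4\eta_y}\Vert y_{k+1}-y_k\Vert^2$ split, the descent lemma in ascent form absorbing its $\tfrac{L_{\nabla\Phi}}{2}$ term into $\tfrac{1}{4\eta_y}$ via the stepsize restriction, and concavity plus adding $f(x_{k+1})$. Your observation that the evaluation point must be $y^*_m$ with $m\le k$ for the mean-zero cross term to vanish matches exactly the measurability argument the paper makes when conditioning on $x_{k+1},\dots,x_1,y_k,\dots,y_1$.
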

\begin{proof}
  Let $k\ge0$ and $y\in\R^n$ be arbitrary but fixed.
  As in Lemma~\ref{lem:standard-gradient-ascent-estimation}, we deduce
  \begin{equation*}
    \begin{aligned}
      h(y_{k+1}) - \langle \nabla_y\Phi(x_{k+1},y_k;\zeta_k), y_{k+1}-y_k \rangle + \frac{1}{2\eta_y} \Big(\Vert y_{k+1} -y_k \Vert^2 + \Vert y-y_{k+1} \Vert^2 \Big)  \\
      \le h(y) - \langle \nabla_y\Phi(x_{k+1},y_k;\zeta_k), y-y_k \rangle + \frac{1}{2\eta_y} \Vert y-y_k \Vert^2.
    \end{aligned}
  \end{equation*}
  The term $\langle \nabla_y\Phi(x_{k+1},y_k;\zeta_k), y_{k+1}-y_k \rangle$ is problematic, because the right~hand side of the inner product is not measurable with respect to the sigma algebra generated by past iterates $\mathcal{F}_k := \sigma\{x_{k+1},\dots, x_1, y_k, \dots, y_1\}$, so we insert and subtract $\langle \nabla_y\Phi(x_{k+1},y_k) , y_{k+1}-y_k\rangle$
  Now, using Young's inequality we estimate the resulting inner product
  \begin{multline*}
    \langle \nabla_y\Phi(x_{k+1},y_k;\zeta_k) - \nabla_y\Phi(x_{k+1},y_k), y_{k+1}-y_k \rangle \\
    \le \eta_y\Vert \nabla_y\Phi(x_{k+1},y_k;\zeta_k) - \nabla_{y}\Phi(x_{k+1},y_k) \Vert^2 + \frac{1}{4\eta_y} \Vert y_{k+1} - y_k \Vert^2.
  \end{multline*}
  Combining the above two inequalities for $y=y^*_m$ with $1\le m\le k$ and taking the expectation together with the bounded variance assumption~\eqref{eq:bounded-variance} gives
  \begin{equation}
    \label{eq:ab1}
    \begin{aligned}
    \MoveEqLeft \Exp{\Expcond{\langle \nabla_{y}\Phi(x_{k+1},y_k;\xi_k), y^*_m-y_k \rangle}{\mathcal{F}_k}}  + \Exp{h(y_{k+1})+ \frac{1}{2\eta_y} \Vert y^*_m-y_{k+1} \Vert^2}\\
    &\le  \Exp{\langle \nabla_{y}\Phi(x_{k+1},y_k), y_{k+1}-y_k \rangle - \frac{1}{4\eta_y} \Vert y_{k+1} -y_k \Vert^2}
    \\
    &\quad+ \eta_y \sigma^2+ \Exp{h(y^*_m) +\frac{1}{2\eta_y} \Vert y^*_m-y_k \Vert^2}.
    \end{aligned}
  \end{equation}
  From the descent lemma (in ascent form) and the fact that $ \eta_y \le  \nicefrac{1}{2L_{\nabla\Phi}}$ we have
  \begin{equation*}
    \Phi(x_{k+1}, y_k) + \langle y_{k+1} - y_k, \nabla_y\Phi(x_{k+1}, y_k)\rangle - \frac{1}{4\eta_y} \Vert y_{k+1} - y_k \Vert^2 \le \Phi(x_{k+1}, y_{k+1}).
  \end{equation*}
  We plug the above inequality into~\eqref{eq:ab1}, make use of the concavity and add $f(x_{k+1})$ on both sides to deduce the statement of the lemma.
\end{proof}

We can now use the previous lemma to estimate $\hat{\Delta}_k$.

\begin{lemma}%
  \label{lem:stoch-estimate-Delta}
  For all $1 \le m \le k$, we have that
  \begin{equation}
    \label{eq:stoch-estimate-Delta}
    \begin{aligned}
      \MoveEqLeft \hat{\Delta}_k \le 2\eta_x L(L_f+L+\sigma)(k-m) + \frac{1}{2\eta_y} \Exp{\Vert y_{k-1} - y^*_m \Vert^2 - \Vert y_k - y^*_m \Vert^2} + \eta_y\sigma^2.
    \end{aligned}
  \end{equation}
\end{lemma}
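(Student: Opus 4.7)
The plan is to mimic the deterministic proof of Lemma~\ref{lem:estimate-Delta} step by step, with the two stochastic ingredients that differ from the deterministic setting being (i) the gradient ascent estimate now carries a variance penalty $\eta_y\sigma^2$ (already supplied by Lemma~\ref{lem:stoch-standard-gradient-ascent-estimation}), and (ii) bounding the path length $\|x_k-x_m\|$ requires controlling the stochastic subgradient norm.

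First, I apply Lemma~\ref{lem:stoch-standard-gradient-ascent-estimation} with $y=y_m^*$ shifted to the pair of indices $(k-1,k)$, obtaining
\begin{equation*}
  0 \le \Exp{\Psi(x_k,y_k) - \Psi(x_k,y_m^*)} + \frac{1}{2\eta_y}\bigl(\Exp{\|y_m^*-y_{k-1}\|^2} - \Exp{\|y_m^*-y_k\|^2}\bigr) + \eta_y\sigma^2.
\end{equation*}
Starting from $\hat\Delta_k = \Exp{\Psi(x_k,y_k^*)-\Psi(x_k,y_k)}$ and adding the above inequality gives
\begin{equation*}
  \hat\Delta_k \le \Exp{\Psi(x_k,y_k^*)-\Psi(x_k,y_m^*)} + \frac{1}{2\eta_y}\bigl(\Exp{\|y_m^*-y_{k-1}\|^2} - \Exp{\|y_m^*-y_k\|^2}\bigr) + \eta_y\sigma^2,
\end{equation*}
so it only remains to bound the leading expectation by $2\eta_x L(L+L_f+\sigma)(k-m)$.

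For this, I reuse the telescoping trick from~\eqref{eq:plus-minus-functionval}: inserting and subtracting $\Phi(x_m,y_k^*)$ and invoking the optimality of $y_m^*$ yields the pathwise bound
\begin{equation*}
  \Psi(x_k,y_k^*)-\Psi(x_k,y_m^*) \le \Phi(x_k,y_k^*) - \Phi(x_m,y_k^*) + \Phi(x_m,y_m^*) - \Phi(x_k,y_m^*) \le 2L\|x_k-x_m\|,
\end{equation*}
using Lipschitz continuity of $\Phi$ in its first argument. The main new estimate is then $\Exp{\|x_k-x_m\|}\le \eta_x(L+L_f+\sigma)(k-m)$: by nonexpansiveness of $\prox{\eta_x f}{\cdot}$ and~\eqref{eq:bound-prox-f},
\begin{equation*}
  \|x_{l+1}-x_l\| \le \|\prox{\eta_x f}{x_l - \eta_x g_l^\xi} - \prox{\eta_x f}{x_l}\| + \|\prox{\eta_x f}{x_l} - x_l\| \le \eta_x\|g_l^\xi\| + \eta_x L_f,
\end{equation*}
and then $\Exp{\|g_l^\xi\|}\le \sqrt{\Exp{\|g_l^\xi\|^2}} \le \sqrt{L^2+\sigma^2}\le L+\sigma$ by Jensen together with~\eqref{eq:steiner}. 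Summing over $l=m,\dots,k-1$ and taking expectations gives the required bound; combined with the two previous displays it yields~\eqref{eq:stoch-estimate-Delta}.

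The only mild obstacle is bookkeeping: $y_k^*$ and $y_m^*$ are random variables (they depend on $x_k,x_m$), but since the Lipschitz estimate holds pathwise we may take expectations afterwards without difficulty. The $\eta_y\sigma^2$ and the extra $\sigma$ inside the coefficient of $(k-m)$ are exactly where the deterministic argument is inflated by stochasticity, as one would expect.
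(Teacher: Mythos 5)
Your proposal is correct and follows essentially the same route as the paper: the same application of the stochastic gradient-ascent estimate at $y=y_m^*$, the same insert-and-subtract decomposition from~\eqref{eq:plus-minus-functionval}, and the same path-length bound via nonexpansiveness of the proximal operator, \eqref{eq:bound-prox-f}, and Jensen's inequality giving $\Exp{\Vert g_l^\xi\Vert}\le\sqrt{L^2+\sigma^2}\le L+\sigma$. No gaps.
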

\begin{proof}
  Let the numbers $1 \le m \le k$ be fixed.
  Starting from the definition of $\hat{\Delta}_k$, we add~\eqref{eq:stoch-gradient-ascent-estimation} to obtain
  \begin{equation}
    \label{eq:diff-function-values-stoch}
    \begin{aligned}
      \hat{\Delta}_k &= \Exp{\psi(x_k) - \Psi(x_k,y_k)} \\
      &\le \Exp{\psi(x_k) - \Psi(x_k, y^*_m)} + \frac{1}{2\eta_y} \Big(\Exp{\Vert y^*_m - y_{k-1} \Vert^2 - \Vert y^*_m - y_k \Vert^2} \Big)+ \eta_y\sigma^2.
    \end{aligned}
  \end{equation}
  As in~\eqref{eq:plus-minus-functionval} we deduce that
  \begin{equation*}
    \Psi(x_k, y^*_k) - \Psi(x_k, y^*_m) \le \Phi(x_k,y_k^*) -\Phi(x_m,y_k^*) +\Phi(x_m,y_m^*) -\Phi(x_k,y_m^*).
  \end{equation*}
  Together with the $L$-Lipschitz continuity of $\Phi(\cdot, y)$ and~\eqref{eq:bound-prox-f} we estimate for $k\ge m$ as in~\eqref{eq:fun-val-same-y}, but using $\Exp{\Vert g_l^\xi \Vert}\le L + \sigma$,
  \begin{equation*}
    \begin{aligned}
      \Exp{\Phi(x_k, y^*_k) - \Phi(x_m, y^*_k)}
      &\le \eta_x L\Big(L_f+\sqrt{L^2+\sigma^2}\Big)(k-m).
    \end{aligned}
  \end{equation*}
  Plugging all of these into~\eqref{eq:diff-function-values-stoch} gives the statement of the lemma.
\end{proof}

In order to estimate the summation of $\hat{\Delta}_k$ we will use the same trick as in the deterministic setting and sum over it in blocks, where the size $B$ of these blocks will divide the total number of iterations $K$.

\begin{lemma}%
  \label{lem:stoch-estimate-sum-of-delta}
  We have that for all $K\ge1$
  \begin{equation}
    \label{eq:stoch-estimate-sum-of-delta}
    \frac{1}{K} \sum_{k=0}^{K-1} \hat{\Delta}_k \le \eta_x L(L+L_f+\sigma) B + \frac{D_h^2}{2\eta_y B} + \eta_y\sigma^2 + \frac{\Delta_0}{K}.
  \end{equation}
\end{lemma}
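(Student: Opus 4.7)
The proof structure is identical to that of Lemma~\ref{lem:estimate-sum-of-delta}, only with Lemma~\ref{lem:stoch-estimate-Delta} used in place of its deterministic counterpart; the new $\eta_y\sigma^2$ term is simply carried through. I would split
\begin{equation*}
  \sum_{k=0}^{K-1}\hat{\Delta}_k = \sum_{j=0}^{K/B-1}\sum_{k=jB}^{(j+1)B-1}\hat{\Delta}_k,
\end{equation*}
and, for each block, choose the index $m$ appearing in Lemma~\ref{lem:stoch-estimate-Delta} so that the squared distances telescope within that block: take $m=1$ for the first block (while keeping $\hat{\Delta}_0 = \Delta_0$ aside, since the lemma requires $m\ge 1$), and $m=jB$ for every block indexed by $j\ge 1$.

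Inside a single block, three contributions arise. The linear term yields $\sum_{k=jB}^{(j+1)B-1}2\eta_x L(L+L_f+\sigma)(k-jB) \le \eta_x L(L+L_f+\sigma) B^2$ via $\sum_{\ell=1}^{B-1}\ell\le B^2/2$. The telescoping of the squared-distance increments collapses to $\frac{1}{2\eta_y}\Exp{\Vert y_{jB-1}-y_{jB}^*\Vert^2}\le \frac{D_h^2}{2\eta_y}$ using Assumption~\ref{ass:regularizers}(ii) applied to $y_{jB-1},y_{jB}^*\in\dom h$. Finally, the $B$ copies of the constant $\eta_y\sigma^2$ coming from each $\hat{\Delta}_k$ in the block contribute $B\eta_y\sigma^2$ per block.

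Summing the per-block estimates over the $K/B$ blocks, and adding $\Delta_0$ for the isolated first term, gives $\eta_x L(L+L_f+\sigma)BK + \frac{K}{B}\cdot\frac{D_h^2}{2\eta_y} + K\eta_y\sigma^2 + \Delta_0$. Dividing by $K$ yields exactly~\eqref{eq:stoch-estimate-sum-of-delta}. There is no obstacle of substance here: the proof is a direct blockwise telescoping, the condition $m\le k$ is automatically maintained, and the only new feature relative to the deterministic Lemma~\ref{lem:estimate-sum-of-delta} is the additive $\eta_y\sigma^2$ in each $\hat{\Delta}_k$, which translates, after averaging by $1/K$, into a single $\eta_y\sigma^2$ in the final bound.
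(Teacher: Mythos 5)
Your proof is correct and follows essentially the same route as the paper: the same blockwise split with $m=1$ for the first block and $m=jB$ for the later ones, the same telescoping of the squared distances bounded by $D_h^2$, and the same carrying-through of the $B\eta_y\sigma^2$ term per block. The minor bookkeeping of setting $\hat{\Delta}_0=\Delta_0$ aside is exactly how the paper's first-block estimate absorbs that term as well.
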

\begin{proof}
  We proceed as in Lemma~\ref{lem:estimate-sum-of-delta}. By using Lemma~\ref{lem:stoch-estimate-Delta} we obtain
  $j>0$ and $m=jB$ we have that
  \begin{equation}
    \label{eq:stoch--sum-over-any-block}
    \begin{aligned}
      \sum_{k=jB}^{(j+1)B-1} \hat{\Delta}_k 
      &\le \eta_x L(L+L_f+\sigma) B^2 + \frac{1}{2\eta_y}D_h^2 + B\eta_y \sigma^2.
    \end{aligned}
  \end{equation}
  For $j=0$ we use $m=1$ and do not estimate $\Delta_0$ but leave it there.
  Plugging~\eqref{eq:stoch--sum-over-any-block} into~\eqref{eq:split--summation-into-blocks} gives the statement of the lemma.
\end{proof}

Now we can prove the convergence result for the stochastic algorithm.

\begin{proof}[Proof of Theorem~\ref{thm:alternating-rate-stoch}]
  We sum up the inequality of Lemma~\ref{lem:stoch-descent-on-moreau} to deduce that
  \begin{equation*}
    \Exp{\psi_\lambda(x_{K})} \le \psi_\lambda(x_0) + 2\eta_x\rho\sum_{k=0}^{K-1}\hat{\Delta}_k - \frac{\eta_x}{2}\sum_{k=0}^{K-1}\Exp{\Vert \nabla \psi_\lambda(x_k) \Vert^2} + 4K\rho\eta_x^2(L^2+\sigma^2).
  \end{equation*}
  Thus, by dividing by $K$ and $\eta_x$ yields
  \begin{equation*}
    \frac{1}{K}\sum_{k=0}^{K-1} \Exp{\Vert\nabla \psi_\lambda(x_k)\Vert^2}  \le \frac{2\Delta^*}{\eta_x K} + \frac{4\rho}{K}\sum_{k=0}^{K-1}\hat{\Delta}_k + 8\rho\eta_x(L^2+\sigma^2).
  \end{equation*}
  Now we plug in~\eqref{eq:stoch-estimate-sum-of-delta} to obtain
  \begin{equation*}
    \begin{aligned}
      \MoveEqLeft\frac{1}{K}\sum_{k=0}^{K-1} \Exp{\Vert\nabla \psi_\lambda(x_k)\Vert^2} \\
      &\le \frac{2\Delta^*}{\eta_x K} + 4\rho\Big( \eta_x L(L+L_f+\sigma) B  + \frac{D_h^2}{2\eta_y B} + \eta_y\sigma^2 \Big)  + 4\rho\frac{\Delta_0}{K}+ 8\rho\eta_x (L^2+\sigma^2).
    \end{aligned}
  \end{equation*}
  With the block size $B=D_h \sqrt{1/(\eta_x\eta_y L(L+L_f+\sigma))}$ we have that
  \begin{equation*}
    \eta_x L(L+L_f+\sigma) B  + \frac{D_h^2}{2\eta_y B} + \eta_y\sigma^2 = \sqrt{\frac{\eta_x}{\eta_y}}\sqrt{L(L+L_f+\sigma) }D_h + \eta_y \sigma^2
  \end{equation*}
  Via the step size choice presented in the theorem we obtain the desired complexity.
\end{proof}

\subsection{Alternating vs simultaneous}%
\label{sub:altvssim-non-strongly}

Although we are not able to show improved rates for the alternating version of GDA in this setting, we would still like to point out some improvements in the constants which otherwise might go unnoticed since the statements are quite technical.

In the nonconvex-concave setting the main descent type property we focus on can be seen in Lemma~\ref{lem:descent-on-moreau} and is given by
\begin{equation*}
  g_\lambda(x_{k+1}) \le g_\lambda(x_k) + 2\rho\eta_x\Delta_k - \frac12\eta_x \Vert \nabla g_\lambda(x_k) \Vert^2  + 4\rho\eta_x^2L^2,
\end{equation*}
From this it clear the only troublesome part is the estimation of $\Delta_k= \psi(x_k)-\Psi(x_k,y_k)$ (to be precise we, we need to estimate the sum of $\Delta_k$ after telescoping).
While we obtain
\begin{equation*}
    \Delta_k \le \Phi(x_k,y_k) - \Phi(x_m,y_k^*)+\Phi(x_m, y_m^*) - \Phi(x_k,y_m^*) + \frac{1}{2 \eta_y}(\Vert y - y_{k-1} \Vert^2 - \Vert y - y_k \Vert^2)
\end{equation*}
in~\cite{lin2020gradient} the same estimate is obtained for simultaneous GDA  plus an additional term
\begin{equation*}
  \Phi(x_k,y_k) -\Phi(x_{k-1}, y_{k-1}),
\end{equation*}
see~\cite[Lemma D.4]{lin2020gradient}. While we do not have information about the sign of this term it is clear that its absence is preferable as it needs to be estimated after telescoping and averaging via
\begin{equation*}
  \frac{1}{K} (\Phi(x_K, y_K) - \Phi(x_0, y_0)) \le \eta_x L^2 + \Delta_0.
\end{equation*}
Looking at the statement of Lemma~\ref{lem:estimate-Delta} we see, however, that factors of both of these terms already appear in the final statement due to other estimations which is why their appearance gets lost in the big O notation.

\section{Nonconvex-strongly concave objective}%
\label{sec:strongly-concave}

By requiring in addition to the assumptions of Section~\ref{sec:non-strongly-concave} \emph{strong} convexity in the second component and smoothness of the coupling function in $x$, we can drop any assumption about Lipschitz continuity and will be able to deduce the max function $\phi$ is smooth with Lipschitz continuous gradient (making it weakly convex). For the technical details see the following assumptions.

\begin{assumption}%
  \label{ass:strongly-concave}
  Let $\Phi$ be $L_{\nabla\Phi}$-smooth uniformly in both components and concave in the second one. The regularizers $f$ and $-h$ are proper, l.s.c.\ and convex.
  Additionally, either $\Phi$ is $\mu$-strongly concave in the second component, uniformly in the first one, or $-h$ is $\mu$-strongly concave.
\end{assumption}


\paragraph{Notation}
In Proposition~\ref{thm:extended-danskin-2} we will show that under the above assumptions $\phi= \max_{y\in\R^n} \{\Phi(\cdot, y) - h(y)\}$ is $L_{\nabla \phi}$-smooth, with $L_{\nabla \phi} = (1+\kappa)L_{\nabla\Phi}$, for $\kappa:= \max\{\nicefrac{L_{\nabla\Phi}}{\mu},1\}$ denoting the \emph{condition number}. In the setting without regularizers, where the strong concavity arises from $\Phi$ it is well known that $\mu\le L_{\nabla\Phi}$ and therefore $1\le \nicefrac{L_{\nabla\Phi}}{\mu}$ (the standard definition of the condition number). If the strong concavity stems from the regularizers $h$ this is no longer true and $\nicefrac{L_{\nabla\Phi}}{\mu}$ might be smaller than $1$ which would lead to tedious case distinctions, which is why we adapt the definition of the condition number in order to provide a unified analysis.
Additionally, the solution set $Y(x)$ defined in~\eqref{eq:solution-set-mapping} consists only of a single element which we will denote by $y^*(x)$. We denote the quantity $\delta_k:= \Vert y_k-y^*_k \Vert^2$, measuring the distance between the current strategy of the second player and her best response according to the current strategy of the first player.

\subsection{Properties of the max function}%
\label{sub:properties-max-function-strongly-concave}

In the following we will show the smoothness of $\phi$, as well as the fact that the solution map fulfills a strong Lipschitz property.

\begin{lemma}[Lipschitz continuity of the solution mapping]%
  \label{lem:solution-map-is-lipschitz}
  The solution map $y^*:\R^d \to \R^n$ which fulfills $\Gamma(x,y^*(x)) = \max_{y\in\R^n} \Gamma(x,y)$ for all $x\in\R^d$ is well defined and $\kappa$-Lipschitz where $\kappa=\max\{\nicefrac{L_{\nabla\Phi}}{\mu},1\}$.
\end{lemma}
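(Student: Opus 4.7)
The plan is to first note that, under Assumption~\ref{ass:strongly-concave}, in either scenario for the source of strong concavity the partial function $\Gamma(x,\cdot) = \Phi(x,\cdot) - h$ is $\mu$-strongly concave and proper upper semicontinuous for every $x \in \R^d$ (since $\Phi(x,\cdot)$ is smooth and $h$ is proper, convex and lower semicontinuous). A proper, $\mu$-strongly concave, upper semicontinuous function admits a unique maximizer, which shows that $y^*$ is well-defined.

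For the Lipschitz estimate I would fix $x_1, x_2 \in \R^d$, set $y_i := y^*(x_i)$, and exploit the first-order optimality conditions
\begin{equation*}
  \nabla_y \Phi(x_i, y_i) \in \partial h(y_i), \qquad i=1,2,
\end{equation*}
which follow from the smoothness of $\Phi(x,\cdot)$ and the convexity of $h$ via the standard subdifferential sum rule. The $\mu$-strong convexity of $-\Gamma(x_1,\cdot)$ translates into $\mu$-strong monotonicity of the operator $y \mapsto \partial h(y) - \nabla_y \Phi(x_1, y)$. Applying strong monotonicity to the two elements $0 \in \partial h(y_1) - \nabla_y \Phi(x_1, y_1)$ and $\nabla_y \Phi(x_2, y_2) - \nabla_y \Phi(x_1, y_2) \in \partial h(y_2) - \nabla_y \Phi(x_1, y_2)$ yields
\begin{equation*}
  \mu \Vert y_1 - y_2 \Vert^2 \le \langle \nabla_y \Phi(x_1, y_2) - \nabla_y \Phi(x_2, y_2), y_1 - y_2 \rangle.
\end{equation*}

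To conclude I would apply Cauchy--Schwarz together with the Lipschitz continuity of $\nabla_y \Phi$ in its first argument (a consequence of joint $L_{\nabla\Phi}$-smoothness) to bound the right-hand side by $L_{\nabla\Phi} \Vert x_1 - x_2 \Vert \Vert y_1 - y_2 \Vert$, and then divide by $\mu \Vert y_1 - y_2 \Vert$ (the case $y_1 = y_2$ being trivial) to obtain $\Vert y_1 - y_2 \Vert \le (L_{\nabla\Phi}/\mu) \Vert x_1 - x_2 \Vert \le \kappa \Vert x_1 - x_2 \Vert$, where the last inequality is just the definition of $\kappa = \max\{L_{\nabla\Phi}/\mu, 1\}$.

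The only real subtlety is that Assumption~\ref{ass:strongly-concave} allows the strong concavity to arise either from $\Phi$ or from $-h$; however, in both cases the sum $\Gamma(x,\cdot)$ inherits $\mu$-strong concavity, so the monotonicity argument above applies uniformly without a case distinction. The genuine Lipschitz constant produced by the computation is $L_{\nabla\Phi}/\mu$, and the factor $\kappa$ in the statement is simply a matter of convention that ensures the ``condition number'' appearing in later complexity estimates is never smaller than one.
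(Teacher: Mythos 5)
Your proposal is correct and follows essentially the same route as the paper: the same first-order optimality inclusions, the same application of $\mu$-strong monotonicity of $\partial h - \nabla_y\Phi(x_1,\cdot)$ to the two specific elements, and the same Cauchy--Schwarz plus Lipschitz-in-$x$ finish. The only addition is your explicit justification of well-definedness via strong concavity of $\Gamma(x,\cdot)$, which the paper leaves implicit.
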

\begin{proof}
  Let $x,x' \in \R^d$ be fixed. From the optimality condition we deduce that
  \begin{equation*}
    \nabla_y\Phi(x',y^*(x')) - \nabla_y \Phi(x,y^*(x')) \in \partial h(y^*(x')) - \nabla_y \Phi(x,y^*(x')).
  \end{equation*}
  Thus by the strong monotonicity of $\partial h - \nabla_y \Phi(x,\cdot)$ we obtain
  \begin{equation*}
    \begin{aligned}
      \mu \Vert y^*(x)-y^*(x') \Vert^2 &\le \langle y^*(x)-y^*(x'), \nabla_y\Phi(x,y^*(x'))-\nabla_y \Phi(x',y^*(x')) \rangle\\
      &\le \Vert y^*(x)-y^*(x') \Vert L_{\nabla\Phi}\Vert x-x' \Vert.
    \end{aligned}
  \end{equation*}
  The statement of the lemma follows.
\end{proof}

\begin{proposition}[Smoothness of the max function]%
  \label{thm:extended-danskin-2}
  Let Assumption~\ref{ass:strongly-concave} hold true. Then, $\phi$ is smooth and its gradient is given by
  \begin{equation*}
    \nabla \phi(x) = \nabla_x \Phi(x,y^*(x))
  \end{equation*}
  and is therefore $L_{\nabla\Phi}(1 +\kappa)$-Lipschitz.
\end{proposition}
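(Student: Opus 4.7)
The plan is to upgrade the extended Danskin result (Proposition~\ref{thm:extended-danskin}) from a subgradient inclusion to a full differentiability statement, exploiting the additional regularity available in this section — namely joint smoothness of $\Phi$ and strong concavity of $\Gamma(x,\cdot)$ — together with the Lipschitz continuity of the solution map $y^*$ already established in Lemma~\ref{lem:solution-map-is-lipschitz}. Let me fix $x\in\R^d$ and denote $y^* = y^*(x)$.

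First, I would observe that under Assumption~\ref{ass:strongly-concave}, the function $\Gamma(x,\cdot) = \Phi(x,\cdot) - h(\cdot)$ is $\mu$-strongly concave, so $Y(x) = \{y^*(x)\}$ is a singleton for every $x$, and Proposition~\ref{thm:extended-danskin} applies because $L_{\nabla\Phi}$-smoothness implies $L_{\nabla\Phi}$-weak convexity of $\Phi(\cdot,y)$. That proposition yields $\partial[\Phi(\cdot,y^*)](x) \subseteq \partial\phi(x)$, and since $\Phi$ is differentiable in $x$, this gives $\nabla_x \Phi(x,y^*) \in \partial\phi(x)$. This establishes the candidate gradient and the one-sided (lower) bound on the directional derivative: for every $v\in\R^d$,
\begin{equation*}
  \liminf_{t\downarrow 0}\frac{\phi(x+tv)-\phi(x)}{t} \ge \langle \nabla_x \Phi(x,y^*), v \rangle.
\end{equation*}

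The main step is the matching upper bound, which is where the strong concavity (through the Lipschitz continuity of $y^*$) enters. For any perturbation $h\in\R^d$, I would use the suboptimality of $y^*$ at the perturbed point and Taylor-expand:
\begin{equation*}
  \phi(x+h) - \phi(x) \le \Phi(x+h,y^*(x+h)) - h(y^*(x+h)) - \Phi(x,y^*(x+h)) + h(y^*(x+h)),
\end{equation*}
which collapses to $\Phi(x+h,y^*(x+h)) - \Phi(x,y^*(x+h))$. By $L_{\nabla\Phi}$-smoothness of $\Phi(\cdot,y^*(x+h))$,
\begin{equation*}
  \phi(x+h) - \phi(x) \le \langle \nabla_x \Phi(x,y^*(x+h)), h\rangle + \frac{L_{\nabla\Phi}}{2}\Vert h\Vert^2.
\end{equation*}
Now I add and subtract $\nabla_x\Phi(x,y^*)$ inside the inner product and use joint smoothness of $\Phi$ together with Lemma~\ref{lem:solution-map-is-lipschitz} to estimate $\Vert \nabla_x\Phi(x,y^*(x+h)) - \nabla_x\Phi(x,y^*)\Vert \le L_{\nabla\Phi}\Vert y^*(x+h)-y^*\Vert \le L_{\nabla\Phi}\kappa \Vert h\Vert$. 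This produces
\begin{equation*}
  \phi(x+h) - \phi(x) - \langle \nabla_x\Phi(x,y^*), h\rangle \le L_{\nabla\Phi}(\kappa+\tfrac{1}{2})\Vert h\Vert^2.
\end{equation*}
Combined with the symmetric lower bound obtained from the subgradient inclusion (interchanging the roles of $x$ and $x+h$), this gives $\phi(x+h) = \phi(x) + \langle \nabla_x\Phi(x,y^*),h\rangle + O(\Vert h\Vert^2)$, so $\phi$ is Fréchet differentiable at $x$ with $\nabla\phi(x) = \nabla_x\Phi(x,y^*(x))$.

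Finally, Lipschitz continuity of $\nabla\phi$ is a direct computation: for $x,x'\in\R^d$, the joint $L_{\nabla\Phi}$-smoothness of $\Phi$ and the $\kappa$-Lipschitz continuity of $y^*$ yield
\begin{equation*}
  \Vert \nabla_x\Phi(x,y^*(x)) - \nabla_x\Phi(x',y^*(x'))\Vert \le L_{\nabla\Phi}\bigl(\Vert x-x'\Vert + \Vert y^*(x)-y^*(x')\Vert\bigr) \le L_{\nabla\Phi}(1+\kappa)\Vert x-x'\Vert.
\end{equation*}
The hard part is the two-sided bound on $\phi(x+h)-\phi(x)$; getting the lower bound via Proposition~\ref{thm:extended-danskin} is essentially free, but the upper bound is where the strong-concavity hypothesis (through Lipschitz continuity of the argmax) becomes indispensable and distinguishes this setting from the merely concave case of Section~\ref{sec:non-strongly-concave}.
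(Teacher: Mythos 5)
Your proof is correct, and it reaches the conclusion by a somewhat different route than the paper. Both arguments hinge on the same two ingredients: the suboptimality inequality $\phi(x+h)-\phi(x)\le \Phi(x+h,y^*(x+h))-\Phi(x,y^*(x+h))$ and the Lipschitz continuity of the solution map from Lemma~\ref{lem:solution-map-is-lipschitz}. The paper, however, works with the convexified function $\tilde\phi=\phi+(L_{\nabla\Phi}/2)\Vert\cdot\Vert^2$ and its directional derivatives: it bounds the difference quotient along $x+\alpha_k v$ by $[\tilde\Phi(\cdot,y^k)]'(x+\alpha_k v;v)$ with $y^k=y^*(x+\alpha_k v)$, passes to the limit using only the \emph{continuity} of $y^*$ and of $\nabla\tilde\Phi$, and concludes from the resulting linearity of $v\mapsto\tilde\phi'(x;v)$ together with~\eqref{eq1}. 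You instead establish Fr\'echet differentiability directly via a two-sided quadratic sandwich: the descent lemma plus the quantitative $\kappa$-Lipschitz bound on $y^*$ give the $O(\Vert h\Vert^2)$ upper error bound, and the Fr\'echet subgradient inclusion from Proposition~\ref{thm:extended-danskin} (or, as you note parenthetically, the same suboptimality trick with $y^*(x)$ at the point $x+h$) gives the matching lower bound. Your version is more elementary and yields an explicit second-order error estimate $\vert\phi(x+h)-\phi(x)-\langle\nabla_x\Phi(x,y^*),h\rangle\vert\le L_{\nabla\Phi}(\kappa+\nicefrac12)\Vert h\Vert^2$ as a by-product, at the price of invoking the Lipschitz (rather than mere continuity) property of $y^*$ in the differentiability step; the Lipschitz bound on $\nabla\phi$ is identical in both. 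One cosmetic remark: you use $h$ simultaneously for the perturbation and for the regularizer $h(\cdot)$, which you should rename to avoid confusion.
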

\begin{proof}
  Following Proposition~\ref{thm:extended-danskin}, we define the quantities $\tilde{\phi}, \tilde{\Phi}$ and $\tilde{\Gamma}$ as there using $\rho=L_{\nabla\Phi}$.
  Let $x,v \in \R^d$, $\alpha_k \downarrow 0$ and $x^k:=x + \alpha_k v$ for any $k \geq 0$. Further, let be $y^k = y^*(x^k)$ for any $k \geq 0$. Then, by the Lipschitz continuity of $y^*(\cdot)$, see Lemma~\ref{lem:solution-map-is-lipschitz}, $\lim_{k\to\infty} y^k = y^*(x)$.
  In addition, for any $v \in \R^d$ and all $k \geq 0$,
  \begin{align*}
    \tilde{\phi}'(x;v) &\leq  \frac{\tilde{\phi}(x^k)- \tilde{\phi}(x)}{\alpha_k}  \leq \frac{\tilde{\Gamma}(x^k,y^k) - \tilde{\Gamma}(x,y^k)}{\alpha_k}\\
                    &=  \frac{\tilde{\Phi}(x^k, y^k)- \tilde{\Phi}(x,y^k)}{\alpha_k} \leq
                      [\tilde{\Phi}(\cdot, y^k)]'(x+\alpha_k v;v) = \langle \nabla_x \tilde{\Phi}(x^k,y^k), v\rangle .
  \end{align*}
  Since the gradient of $\tilde{\Phi}$ is continuous, this implies by letting $k \rightarrow +\infty$ that
  \begin{equation*}
    \label{eq:reverse-gradient}
    \tilde{\phi}'(x;v) \leq \langle \nabla_x \tilde{\Phi}(x,y^*(x)), v\rangle, \quad \forall v \in \R^d,
  \end{equation*}
  which, together with~\eqref{eq1}, yields that~\eqref{eq:reverse-gradient} holds with equality.
  The fact that the gradient of $\phi$ is Lipschitz continuous follows, with $y^*=y^*(x)$ and $\bar{y}^{*}=\bar{y}^*(\bar{x})$, from
  \begin{equation*}
    \begin{aligned}
       \Vert \nabla\phi(x)-\nabla\phi(\bar{x}) \Vert &\le \Vert \nabla_x\Phi(x,y^*)-\nabla_x\Phi(\bar{x},y^*) \Vert + \Vert \nabla_x\Phi(\bar{x},y^*)-\nabla_x\Phi(\bar{x},\bar{y}^*) \Vert \\
      &\le L_{\nabla\Phi} \Vert x-\bar{x} \Vert + L_{\nabla\Phi}\Vert y^*-\bar{y}^* \Vert \le (L_{\nabla\Phi} + L_{\nabla\Phi} \kappa) \Vert x-\bar{x} \Vert,
    \end{aligned}
  \end{equation*}
  together with the claimed constant.
\end{proof}

\subsection{Deterministic setting}%
\label{sub:deterministic-strongly-concave}

For this section Algorithm~\ref{alg:alternatinggda} reads as
\begin{equation}%
  \label{eq:alt-gda-strongly}
  (\forall k \ge 0)
  \left\lfloor
    \begin{array}{l}
      x_{k+1} = \prox{\eta_x f}{x_k - \eta_x \nabla_x\Phi(x_k,y_k)} \\
      y_{k+1} = \prox{\eta_y h}{y_k + \eta_y \nabla_y\Phi(x_{k+1},y_k)}.
    \end{array}\right.
\end{equation}
We start with the main convergence result of this section.
\begin{theorem}%
  \label{thm:rate-strongly-concave}
  Let Assumption~\ref{ass:strongly-concave} and~\ref{ass:max-function-lower-bounded} hold. For algorithm~\ref{eq:alt-gda-strongly} with step size $\eta_y=\nicefrac{1}{L_{\nabla\Phi}}$ and $\eta_x=\nicefrac{1}{(3{(\kappa+1)}^2L_{\nabla\Phi})}$ the number of gradient evaluations $K$ required, using the notation $\Delta^* = \psi(x_0)- \inf_{x\in\R^d} \psi(x)$, is
  \begin{equation*}
    \mathcal{O}\left(\frac{\kappa L_{\nabla\Phi}}{\epsilon^2} \max\{\kappa \Delta^*, L_{\nabla\Phi} \Vert y^*(x_0)- y_0 \Vert^2\}\right),
  \end{equation*}
  to visit an $\epsilon$-stationary point such that $\min_{1\le k\le K}\dist\big(-\nabla\phi(x_k), \partial f(x_k)\big) \le \epsilon$.
\end{theorem}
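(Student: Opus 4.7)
The plan is to reduce the problem to the familiar analysis of proximal-gradient on the smooth, weakly convex function $\psi=f+\phi$ (smoothness of $\phi$ coming from Proposition~\ref{thm:extended-danskin-2}), treating the inexact gradient $\nabla_x\Phi(x_k,y_k)$ versus the true one $\nabla\phi(x_k)=\nabla_x\Phi(x_k,y^*(x_k))$ as an error controlled by $\delta_k:=\Vert y_k-y^*(x_k)\Vert^2$. Writing $v_k:=-\nabla_x\Phi(x_k,y_k)-\eta_x^{-1}(x_{k+1}-x_k)\in\partial f(x_{k+1})$ from the proximal optimality of $x_{k+1}$, and combining with the descent lemma for the $L_{\nabla\phi}$-smooth function $\phi$, Young's inequality applied to the cross term $\langle \nabla\phi(x_k)-\nabla_x\Phi(x_k,y_k),x_{k+1}-x_k\rangle$, and the bound $\Vert \nabla\phi(x_k)-\nabla_x\Phi(x_k,y_k)\Vert\le L_{\nabla\Phi}\sqrt{\delta_k}$ (from $L_{\nabla\Phi}$-smoothness of $\Phi$ in $y$), I obtain a descent inequality of the form
\begin{equation*}
  \psi(x_{k+1})-\psi(x_k)\le \left(\tfrac{L_{\nabla\phi}}{2}-\tfrac{1}{2\eta_x}\right)\Vert x_{k+1}-x_k\Vert^2+\tfrac{\eta_x L_{\nabla\Phi}^2}{2}\delta_k.
\end{equation*}
With the choice $\eta_x=1/(3(\kappa+1)^2 L_{\nabla\Phi})$, the first coefficient becomes $-\Theta(\kappa^2 L_{\nabla\Phi})$ and the $\delta_k$-coefficient is $\Theta(L_{\nabla\Phi}/\kappa^2)$.

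The second ingredient is a contraction estimate on $\delta_k$. Since $y_{k+1}$ is one proximal-gradient ascent step for the $\mu$-strongly concave problem $\max_y \Gamma(x_{k+1},y)$ with stepsize $\eta_y=1/L_{\nabla\Phi}$, a standard contraction argument gives $\Vert y_{k+1}-y^*(x_{k+1})\Vert^2\le(1-\mu/L_{\nabla\Phi})\Vert y_k-y^*(x_{k+1})\Vert^2$. Then I insert $\pm y^*(x_k)$ and apply Lemma~\ref{lem:solution-map-is-lipschitz} ($\kappa$-Lipschitzness of $y^*$) together with a weighted Young inequality (weight $\gamma=\Theta(1/\kappa)$) to separate the two contributions, yielding
\begin{equation*}
  \delta_{k+1}\le \left(1-\tfrac{c_1}{\kappa}\right)\delta_k+c_2\kappa^3\Vert x_{k+1}-x_k\Vert^2.
\end{equation*}

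With both inequalities in hand, I form the Lyapunov function $V_k:=\psi(x_k)+\alpha\delta_k$ and tune $\alpha=\Theta(L_{\nabla\Phi}/\kappa)$ so that the coefficient of $\delta_k$ in $V_{k+1}-V_k$ becomes $-\Theta(L_{\nabla\Phi}/\kappa^2)$; the choice of $\eta_x$ then also guarantees that the coefficient of $\Vert x_{k+1}-x_k\Vert^2$ remains $-\Theta(\kappa^2 L_{\nabla\Phi})$ even after absorbing the $\alpha c_2\kappa^3$ contribution. Telescoping and using Assumption~\ref{ass:max-function-lower-bounded} gives
\begin{equation*}
  \sum_{k=0}^{K-1}\Big(\kappa^2 L_{\nabla\Phi}\Vert x_{k+1}-x_k\Vert^2+\tfrac{L_{\nabla\Phi}}{\kappa^2}\delta_k\Big)\lesssim \Delta^*+\tfrac{L_{\nabla\Phi}}{\kappa}\Vert y_0-y^*(x_0)\Vert^2.
\end{equation*}

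Finally, to convert this into stationarity at one of the iterates, I use $v_k\in\partial f(x_{k+1})$ together with the triangle inequality to estimate
\begin{equation*}
  \dist\!\big(-\nabla\phi(x_{k+1}),\partial f(x_{k+1})\big)\le (L_{\nabla\phi}+\eta_x^{-1})\Vert x_{k+1}-x_k\Vert+L_{\nabla\Phi}\sqrt{\delta_k},
\end{equation*}
square it, sum over $k$, and apply the Lyapunov bound (with $1/\eta_x=\Theta(\kappa^2 L_{\nabla\Phi})$) to obtain $\sum_{k=1}^K \dist^2\le C\kappa L_{\nabla\Phi}\max\{\kappa\Delta^*,L_{\nabla\Phi}\Vert y_0-y^*(x_0)\Vert^2\}$, from which $\min_{1\le k\le K}\dist\le\epsilon$ whenever $K$ meets the claimed complexity bound. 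The main obstacle I anticipate is not any single step in isolation but the careful bookkeeping of the $\kappa$-dependence: the descent contributes $\delta_k$ with coefficient $L_{\nabla\Phi}/\kappa^2$, the contraction only decays at rate $1/\kappa$, and the solution-map Lipschitz constant injects an extra $\kappa^2$ in front of $\Vert x_{k+1}-x_k\Vert^2$, so it takes a careful joint choice of $\eta_x$ and of the potential-function weight $\alpha$ to make every coefficient land with the right sign and to obtain the advertised $\kappa$-factors in the final rate.
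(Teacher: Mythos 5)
Your proposal is correct and follows essentially the same route as the paper's proof: the same descent inequality obtained from the prox-step optimality plus the descent lemma for the $L_{\nabla\phi}$-smooth max function with an error term $\eta_x L_{\nabla\Phi}^2\delta_k$, the same contraction recursion $\delta_{k+1}\le(1-\tfrac{1}{2\kappa})\delta_k+O(\kappa^3)\Vert x_{k+1}-x_k\Vert^2$ built from the proximal-gradient contraction and the $\kappa$-Lipschitz solution map, and the same stepsize cancellation at $\eta_x=\Theta(1/(\kappa^2 L_{\nabla\Phi}))$. The only cosmetic differences are that you telescope a Lyapunov function $\psi(x_k)+\alpha\delta_k$ where the paper sums the $\delta$-recursion separately before substituting, and you bound the stationarity residual by a triangle inequality after establishing descent where the paper folds $\Vert w_{k+1}\Vert^2$ directly into its main per-iteration inequality; both variants produce the identical $\kappa$-bookkeeping and final complexity.
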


Before we start with the first lemma, let us recall that $\delta_k = \Vert y_k - y_k^* \Vert^2$ denotes for all $k\ge0$ the squared distance  between the current iterate $y_k$ and the maximizing argument $y_k^* = \argmax_{y} \Psi(x_k,y)$.

\begin{lemma}%
  \label{lem:dist-iter-bounds-optimality}
  There exists a sequence ${(w_k)}_{k\ge1}$ such that $w_k \in (\partial f+\nabla\phi)(x_k)$ and its norm can be bounded for all $k \ge 0$ by
  \begin{equation*}
      \frac12 \eta_x \Vert w_{k+1} \Vert^2 \le \psi(x_k)-\psi(x_{k+1}) + \frac12\Big(L_{\nabla\phi} + 2L_{\nabla\phi}^2\eta_x - \frac{1}{\eta_x} \Big) \Vert x_k-x_{k+1} \Vert^2 + \eta_x L_{\nabla\Phi}^2 \delta_k.
  \end{equation*}
\end{lemma}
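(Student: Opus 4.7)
The plan is to identify the natural subgradient arising from the proximal step and then estimate its norm via a descent-type argument. From the first-order optimality of $x_{k+1}=\prox{\eta_x f}{x_k-\eta_x\nabla_x\Phi(x_k,y_k)}$ we read off
\begin{equation*}
g_{k+1}:= -\nabla_x\Phi(x_k,y_k)-\eta_x^{-1}(x_{k+1}-x_k)\in\partial f(x_{k+1}),
\end{equation*}
and, since $\phi$ is differentiable by Proposition~\ref{thm:extended-danskin-2}, I take $w_{k+1}:=\nabla\phi(x_{k+1})+g_{k+1}$, which lies in $(\partial f+\nabla\phi)(x_{k+1})$ as required.

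The next step is a one-step descent on $\psi$. Since the proximal subproblem is $\eta_x^{-1}$-strongly convex, comparing its minimizer $x_{k+1}$ against the test point $x_k$ gives
\begin{equation*}
f(x_{k+1})-f(x_k)\le -\langle\nabla_x\Phi(x_k,y_k),x_{k+1}-x_k\rangle-\tfrac{1}{\eta_x}\Vert x_{k+1}-x_k\Vert^2.
\end{equation*}
Adding the standard descent lemma for the $L_{\nabla\phi}$-smooth function $\phi$ (smoothness from Proposition~\ref{thm:extended-danskin-2}) and setting $B_k:=\nabla\phi(x_k)-\nabla_x\Phi(x_k,y_k)=\nabla_x\Phi(x_k,y_k^*)-\nabla_x\Phi(x_k,y_k)$, which satisfies $\Vert B_k\Vert\le L_{\nabla\Phi}\sqrt{\delta_k}$ by the joint Lipschitzness of $\nabla\Phi$, I obtain
\begin{equation*}
\psi(x_{k+1})-\psi(x_k)\le\langle B_k,x_{k+1}-x_k\rangle+\Big(\tfrac{L_{\nabla\phi}}{2}-\tfrac{1}{\eta_x}\Big)\Vert x_{k+1}-x_k\Vert^2.
\end{equation*}

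To bound $\Vert w_{k+1}\Vert^2$, I decompose $w_{k+1}=A_k+B_k+C_k$, where $A_k:=\nabla\phi(x_{k+1})-\nabla\phi(x_k)$ satisfies $\Vert A_k\Vert\le L_{\nabla\phi}\Vert x_{k+1}-x_k\Vert$ and $C_k:=-\eta_x^{-1}(x_{k+1}-x_k)$, and expand $\Vert w_{k+1}\Vert^2$ fully. The pivotal cross term $2\langle B_k,C_k\rangle=-2\eta_x^{-1}\langle B_k,x_{k+1}-x_k\rangle$ is converted by the descent inequality into $2\eta_x^{-1}(\psi(x_k)-\psi(x_{k+1}))$ plus a controlled $\Vert x_{k+1}-x_k\Vert^2$ contribution. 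The remaining cross terms $2\langle A_k,B_k\rangle$ and $2\langle A_k,C_k\rangle$ are handled by Cauchy--Schwarz together with Young's inequality, while $\Vert C_k\Vert^2=\eta_x^{-2}\Vert x_{k+1}-x_k\Vert^2$ produces, after multiplication by $\eta_x/2$, the negative $-\tfrac{1}{2\eta_x}\Vert x_{k+1}-x_k\Vert^2$ on the right-hand side.

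The main obstacle is the tight accounting of these terms: one must keep the cross term $\langle B_k,C_k\rangle$ in inner-product form long enough to substitute the descent inequality rather than bounding it directly by Young's inequality, and one must combine $\Vert C_k\Vert^2$ with the $\tfrac{1}{\eta_x}$-factor in the descent so that the negative $-\tfrac{1}{2\eta_x}\Vert x_{k+1}-x_k\Vert^2$ term survives. This negative contribution is what the subsequent proof of Theorem~\ref{thm:rate-strongly-concave} relies on: for the chosen stepsize $\eta_x=1/(3(\kappa+1)^2L_{\nabla\Phi})$ the $-\tfrac{1}{2\eta_x}$ coefficient dominates the positive $L_{\nabla\phi}^2\eta_x$ contribution, permitting the telescoped estimate to be closed.
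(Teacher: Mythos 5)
Your proposal follows essentially the same route as the paper: the same subgradient $w_{k+1}=\eta_x^{-1}(x_k-x_{k+1})+\nabla\phi(x_{k+1})-\nabla_x\Phi(x_k,y_k)$, the same expansion of $\Vert w_{k+1}\Vert^2$, the same conversion of the pivotal cross term into $\psi(x_k)-\psi(x_{k+1})$ via the strong convexity of the prox subproblem plus the descent lemma, and the same Young-type bound producing $2L_{\nabla\phi}^2\Vert x_{k+1}-x_k\Vert^2+2L_{\nabla\Phi}^2\delta_k$. The one substantive deviation is that you linearize $\phi$ at $x_k$, which forces the split $w_{k+1}=A_k+B_k+C_k$ with $B_k=\nabla\phi(x_k)-\nabla_x\Phi(x_k,y_k)$ and leaves the cross term $2\langle A_k,C_k\rangle$ to be bounded separately by Cauchy--Schwarz. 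That step costs an extra $\eta_x\vert\langle A_k,C_k\rangle\vert\le L_{\nabla\phi}\Vert x_{k+1}-x_k\Vert^2$, so your final coefficient is $\frac12\bigl(3L_{\nabla\phi}+2L_{\nabla\phi}^2\eta_x-\eta_x^{-1}\bigr)$ rather than the stated $\frac12\bigl(L_{\nabla\phi}+2L_{\nabla\phi}^2\eta_x-\eta_x^{-1}\bigr)$; this weaker constant would still close the telescoping argument in Theorem~\ref{thm:rate-strongly-concave} for the chosen stepsize, but it does not reproduce the lemma as stated. The fix is to use the descent lemma anchored at $x_{k+1}$, i.e.\ $\phi(x_{k+1})+\langle\nabla\phi(x_{k+1}),x_k-x_{k+1}\rangle-\frac{L_{\nabla\phi}}{2}\Vert x_k-x_{k+1}\Vert^2\le\phi(x_k)$, so that the entire inner product $\langle\nabla\phi(x_{k+1})-\nabla_x\Phi(x_k,y_k),x_k-x_{k+1}\rangle=\eta_x\langle A_k+B_k,C_k\rangle$ is absorbed in one step and only $\Vert A_k+B_k\Vert^2$ requires Young's inequality; this is exactly the paper's bookkeeping and recovers the claimed constant.
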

\begin{proof}
  Let $k\ge0$ be arbitrary but fixed. From the optimality condition of the proximal operator we deduce by adding $\nabla \phi(x_{k+1})$ on both sides
  \begin{equation*}
    w_{k+1}:= \frac{1}{\eta_x}(x_k-x_{k+1}) + \nabla\phi(x_{k+1}) - \nabla_x\Phi(x_k,y_k) \in \partial f(x_{k+1})+\nabla\phi(x_{k+1}),
  \end{equation*}
  as claimed. In order to prove the bound on $\Vert w_{k+1} \Vert$ we proceed as follows:
  \begin{equation}
    \label{eq:estimate-wk}
    \begin{aligned}
    \Vert w_{k+1} \Vert^2 &= \eta_x^{-2}\Vert x_k-x_{k+1} \Vert^2 + 2\eta_x^{-1} \langle x_k-x_{k+1}, \nabla\phi(x_{k+1})-\nabla_x\Phi(x_k,y_k) \rangle \\
    & \quad+ \Vert \nabla\phi(x_{k+1})-\nabla_x\Phi(x_k,y_k) \Vert^2.
    \end{aligned}
  \end{equation}
  The smoothness of $\phi$ implies via the descent lemma that
  \begin{equation}
    \label{eq:ascent-lemma-phi}
    \phi(x_{k+1})+\langle \nabla\phi(x_{k+1}), x_k-x_{k+1} \rangle - \frac{L_{\nabla\phi}}{2} \Vert x_{k+1}-x_k \Vert^2 \le \phi(x_k).
  \end{equation}
  Since the proximal operator minimizes a $\nicefrac{1}{\eta_x}$-strongly convex function we have that
  \begin{equation*}
    \begin{aligned}
      \MoveEqLeft f(x_{k+1})+ \langle \nabla_x\Phi(x_k,y_k),x_{k+1}-x_k \rangle + \frac{1}{\eta_x}\Vert x_{k+1}-x_k \Vert^2 \le f(x_k)
    \end{aligned}
  \end{equation*}
  Adding this inequality to~\eqref{eq:ascent-lemma-phi} we deduce that
  \begin{equation}
    \label{eq:descent-grad-kplus1}
    \begin{aligned}
      \langle \nabla\phi(x_{k+1})-\nabla_x\Phi(x_k,y_k), x_k-x_{k+1} \rangle &\le \psi(x_k) - \psi(x_{k+1}) \\
      &\quad+\frac12\left( L_{\nabla\phi}-\frac{2}{\eta_x} \right) \Vert x_{k+1}-x_k \Vert^2.
    \end{aligned}
  \end{equation}
  Lastly, by the Young inequality
  \begin{equation*}
    \begin{aligned}
      \Vert \nabla\phi(x_{k+1})-\nabla_x\Phi(x_k,y_k) \Vert^2 &= \Vert \nabla\phi(x_{k+1})-\nabla\phi(x_k)+\nabla\phi(x_k)-\nabla_x\Phi(x_k,y_k) \Vert^2 \\
      &\le 2L_{\nabla\phi}^2\Vert x_{k+1}-x_k \Vert^2 + 2L_{\nabla\Phi}^2\delta_k.
    \end{aligned}
  \end{equation*}
  Plugging~\eqref{eq:descent-grad-kplus1} and~\eqref{eq:ascent-lemma-phi} into~\eqref{eq:estimate-wk} yields the desired statement.
\end{proof}

In the next lemma it remains to bound the gap between the current iterate and the maximizing argument of the second component $\delta_k = \Vert y^*_k -y_k \Vert^2$.

\begin{lemma}%
  \label{lem:recursion-delta}
  We have that for all $k\ge0$, then
  \begin{equation*}
    \delta_{k+1} \le \left(1 - \frac{1}{2\kappa} \right)\delta_k + \kappa^3 \Vert x_{k+1}-x_k \Vert^2.
  \end{equation*}
\end{lemma}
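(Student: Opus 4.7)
The plan is to show that the $y$-update is a contraction toward $y^*(x_{k+1})$ (the new optimal response), then use the Lipschitz continuity of the solution map (Lemma~\ref{lem:solution-map-is-lipschitz}) to transfer the estimate back to $y^*(x_k)$ at the cost of a term in $\Vert x_{k+1}-x_k\Vert^2$.

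First I would observe that the update
\begin{equation*}
  y_{k+1} = \prox{\eta_y h}{y_k + \eta_y \nabla_y\Phi(x_{k+1},y_k)}
\end{equation*}
is exactly one proximal-gradient step applied to the minimization of $h(\cdot)-\Phi(x_{k+1},\cdot)$, which under Assumption~\ref{ass:strongly-concave} is $\mu$-strongly convex (the strong convexity coming either from $-\Phi(x_{k+1},\cdot)$ or from $h$) and has $L_{\nabla\Phi}$-Lipschitz smooth part. The fixed point of this operator is precisely $y^*_{k+1}:=y^*(x_{k+1})$. A standard contraction estimate for proximal-gradient with stepsize $\eta_y=1/L_{\nabla\Phi}$ then yields
\begin{equation*}
  \Vert y_{k+1}-y^*_{k+1}\Vert^2 \le \Big(1-\tfrac{1}{\kappa}\Big)\Vert y_k-y^*_{k+1}\Vert^2,
\end{equation*}
where $\kappa=\max\{L_{\nabla\Phi}/\mu,1\}$. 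This is the key one-step inequality driving the recursion.

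Next I would introduce $y^*_k$ by the triangle inequality
\begin{equation*}
  \Vert y_k-y^*_{k+1}\Vert \le \Vert y_k-y^*_k\Vert + \Vert y^*_k-y^*_{k+1}\Vert \le \sqrt{\delta_k} + \kappa\Vert x_{k+1}-x_k\Vert,
\end{equation*}
using Lemma~\ref{lem:solution-map-is-lipschitz} for the second summand. Squaring by Young's inequality with a free parameter $a>0$ gives
\begin{equation*}
  \Vert y_k-y^*_{k+1}\Vert^2 \le (1+a)\delta_k + (1+a^{-1})\kappa^2\Vert x_{k+1}-x_k\Vert^2.
\end{equation*}

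Plugging this into the contraction estimate produces
\begin{equation*}
  \delta_{k+1} \le \Big(1-\tfrac{1}{\kappa}\Big)(1+a)\,\delta_k + \Big(1-\tfrac{1}{\kappa}\Big)(1+a^{-1})\kappa^2\Vert x_{k+1}-x_k\Vert^2.
\end{equation*}
Choosing $a$ of order $1/\kappa$ (e.g.\ $a=\frac{1}{2(\kappa-1)}$) ensures that $(1-\kappa^{-1})(1+a)\le 1-\frac{1}{2\kappa}$ for the leading coefficient, while $1+a^{-1}=O(\kappa)$, so that the second coefficient is $O(\kappa^3)$, matching the claimed $\kappa^3$.

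The main obstacle is simply matching the advertised constant $\kappa^3$ in the perturbation term exactly: the contraction rate $1-1/\kappa$, the Lipschitz constant $\kappa$ of $y^*$ and the Young's inequality split each contribute factors that must be balanced carefully. Everything else is routine from the assumptions and the previous lemmata.
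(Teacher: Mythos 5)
Your proposal follows the paper's proof essentially step for step: a one-step contraction of the proximal-gradient map toward its fixed point $y^*_{k+1}$, then the triangle inequality plus Young's inequality with a parameter of order $1/\kappa$, then the $\kappa$-Lipschitz continuity of $y^*(\cdot)$ from Lemma~\ref{lem:solution-map-is-lipschitz}. The only difference is the obstacle you yourself flag: the paper resolves it by using the sharper contraction factor $q=\bigl(\tfrac{\kappa}{\kappa+1}\bigr)^2$ (obtained from Nesterov's strong-convexity inequality when $\Phi(x_{k+1},\cdot)$ is strongly concave, respectively from the contractivity of the prox of a strongly convex $h$), which together with the Young parameter $a=\tfrac{3\kappa^2-1}{2\kappa^3}$ makes the perturbation coefficient come out as exactly $\kappa^3$, whereas your looser factor $1-1/\kappa$ only yields $2\kappa^3$.
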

\begin{proof}
  Let $k\ge0$ be fixed.
  From the definition of $y_{k+1}$, see~\eqref{eq:alt-gda-strongly}, and the fact that $y_{k+1}^*$ is a fixed point of the proximal-gradient operator, we deduce
  \begin{equation*}
    \begin{aligned}
      \delta_{k+1} 
      = \Vert \prox{\eta_y h}{y^*_{k+1} + \eta_y \nabla_y \Phi(x_{k+1},y^*_{k+1})} - \prox{\eta_y h}{y_{k}+ \eta_y\nabla_y \Phi(x_{k+1},y_k)} \Vert^2.
    \end{aligned}
\end{equation*}
  If \(\Phi\) is strongly concave in its second component we can use the nonexpansiveness of the proximal operator
  and~\cite[Theorem 2.1.11]{nesterov-introductory}, which states
  \begin{equation}
    \label{eq:nesterov-bound}
    \begin{aligned}
      \MoveEqLeft \langle \nabla_y\Phi(x_{k+1},y_{k+1}^*)-\nabla_y\Phi(x_{k+1},y_k),y_{k+1}^*-y_k \rangle \\ &\le -\frac{\mu L_{\nabla\Phi}}{\mu+L_{\nabla\Phi}}\Vert y_{k+1}^*-y_k \Vert^2 - \frac{1}{\mu+L_{\nabla\Phi}} \Vert \nabla_y\Phi(x_{k+1},y_{k+1}^*)-\nabla_y\Phi(x_{k+1},y_k) \Vert^2
  \end{aligned}
  \end{equation}
  to conclude
  \begin{equation*}
    \begin{aligned}
      \delta_{k+1} &\le \Vert y^*_{k+1} + \eta_y\nabla_y \Phi(x_{k+1},y^*_{k+1}) - y_{k}- \eta_y\nabla_y \Phi(x_{k+1},y_k) \Vert^2 \\
      &= \Vert y_{k+1}^* -y_k \Vert^2 + 2\eta_y\langle y_{k+1}^* -y_k, \nabla_y \Phi(x_{k+1},y^*_{k+1})-\nabla_y \Phi(x_{k+1},y_k) \rangle \\
      &\quad+ \eta_y^2\Vert \nabla_y \Phi(x_{k+1},y^*_{k+1})-\nabla_y \Phi(x_{k+1},y_k) \Vert^2\\
      &\overset{~\eqref{eq:nesterov-bound}}{\le}\left( \frac{\kappa-1}{\kappa+1} \right) \Vert y_{k+1}^* -y_k \Vert^2 \le q\Vert y_{k+1}^* -y_k \Vert^2
    \end{aligned}
  \end{equation*}
  with $q:= {\left(\frac{\kappa}{\kappa+1}\right)}^2$, where we used that $\eta_y=\nicefrac{1}{L_{\nabla\Phi}}$.
  If on the other hand $-h$ is strongly concave we can use the fact that the proximal operator (of $h$) is even a contraction, see~\cite[Proposition 25.9 (i)]{bc}, to deduce that \(\delta_{k+1} \le q\Vert y_{k+1}^* -y_k \Vert^2 \).
  Therefore, in either case \(\delta_{k+1} \le q\Vert y_{k+1}^* -y_k \Vert^2 \).
  Using this, the triangle inequality and Young's inequality, we have
  \begin{align}
    \delta_{k+1} &\le q \Vert y^*_{k+1} - y_k \Vert^2 \nonumber \le q {\Big( \Vert y^*_{k+1} - y^*_k \Vert + \Vert y^*_k-y_k \Vert\Big)}^2\nonumber \\
            &\le q\left(1 + \frac{3 \kappa^2-1}{2 \kappa^3} \right)\underbrace{\Vert y^*_k-y_k \Vert^2}_{=\delta_k} + q\left(1+\frac{2 \kappa^3}{3 \kappa^2-1}\right)\Vert y^*_{k+1}-y^*_k \Vert^2\nonumber \\
            &\le \left(1 - \frac{1}{2\kappa}\right) \delta_k + \kappa \Vert y^*_{k+1}-y^*_k \Vert^2. \label{eq:estimate-delta1}
  \end{align}
  Due to the $\kappa$-Lipschitz continuity of $y^*(\cdot)$ we have that $\Vert y^*_{k+1} - y^*_k \Vert \le \kappa \Vert x_{k+1}-x_k \Vert$, which finishes the proof.
\end{proof}
Now we can bound the sum of $\delta_k$.
\begin{lemma}%
  \label{lem:bound-sum-delta}
  We have that, for all $K\ge1$
  \begin{equation*}
    \sum_{k=0}^{K-1} \delta_k \le 2\kappa\delta_0 + 2\kappa^4 \sum_{k=0}^{K-1}\Vert x_{k+1}-x_k \Vert^2.
  \end{equation*}
\end{lemma}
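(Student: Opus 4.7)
The plan is to derive the bound by summing the one-step recursion from Lemma~\ref{lem:recursion-delta} directly and then collecting terms, which avoids unrolling a geometric series. This is cleaner than iterating the recursion because the contraction factor $1 - \tfrac{1}{2\kappa}$ together with telescoping lets the desired constant $2\kappa$ emerge in one line.

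Concretely, I would first sum the inequality $\delta_{k+1} \le \bigl(1 - \tfrac{1}{2\kappa}\bigr)\delta_k + \kappa^3 \Vert x_{k+1}-x_k \Vert^2$ over $k = 0, \ldots, K-1$, obtaining
\begin{equation*}
\sum_{k=1}^{K} \delta_k \;\le\; \Bigl(1 - \tfrac{1}{2\kappa}\Bigr)\sum_{k=0}^{K-1}\delta_k + \kappa^3 \sum_{k=0}^{K-1} \Vert x_{k+1}-x_k \Vert^2.
\end{equation*}
Rewriting the left-hand side as $\sum_{k=0}^{K-1}\delta_k + \delta_K - \delta_0$ and moving the factor of $\bigl(1-\tfrac{1}{2\kappa}\bigr)\sum \delta_k$ to the left would isolate $\tfrac{1}{2\kappa}\sum_{k=0}^{K-1}\delta_k$ on the left and leave $\delta_0 - \delta_K + \kappa^3 \sum \Vert x_{k+1}-x_k \Vert^2$ on the right.

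Finally, I would drop the nonpositive term $-\delta_K$ (since $\delta_K \ge 0$) and multiply both sides by $2\kappa$ to arrive at the stated inequality
\begin{equation*}
\sum_{k=0}^{K-1} \delta_k \;\le\; 2\kappa \delta_0 + 2\kappa^4 \sum_{k=0}^{K-1} \Vert x_{k+1}-x_k \Vert^2.
\end{equation*}

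No step here looks like a genuine obstacle; the work was already done inside Lemma~\ref{lem:recursion-delta}, where the geometric contraction $1 - \tfrac{1}{2\kappa}$ was carefully extracted from the joint effect of the proximal nonexpansiveness and the Lipschitzness of $y^*(\cdot)$. The only thing to watch is the bookkeeping of indices when turning $\sum_{k=1}^{K}\delta_k$ into $\sum_{k=0}^{K-1}\delta_k$ and making sure the constant factor multiplying $\sum \Vert x_{k+1}-x_k \Vert^2$ really is $\kappa^3 \cdot 2\kappa = 2\kappa^4$.
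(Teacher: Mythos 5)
Your proof is correct, and it takes a genuinely different (and cleaner) route than the paper. The paper unrolls the recursion of Lemma~\ref{lem:recursion-delta} to get the pointwise bound $\delta_k \le (1-\tfrac{1}{2\kappa})^k\delta_0 + \kappa^3\sum_{j=0}^{k-1}(1-\tfrac{1}{2\kappa})^{k-j-1}\Vert x_{j+1}-x_j\Vert^2$, then sums over $k$, swaps the order of the resulting double sum, and bounds the inner geometric series by $\sum_{j\ge0}(1-(2\kappa)^{-1})^j = 2\kappa$. You instead sum the one-step recursion directly, telescope the left-hand side via $\sum_{k=1}^{K}\delta_k = \sum_{k=0}^{K-1}\delta_k - \delta_0 + \delta_K$, absorb the $(1-\tfrac{1}{2\kappa})$ factor to isolate $\tfrac{1}{2\kappa}\sum_{k=0}^{K-1}\delta_k$, drop $-\delta_K\le 0$, and multiply by $2\kappa$. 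Both arguments yield exactly the same constants $2\kappa$ and $2\kappa^4 = 2\kappa\cdot\kappa^3$; your version avoids the double-sum bookkeeping entirely, while the paper's unrolling additionally produces per-iterate decay estimates that are not needed for this lemma. The only prerequisite either way is $\kappa\ge1$ (guaranteed by the definition $\kappa=\max\{L_{\nabla\Phi}/\mu,1\}$) so that $1-\tfrac{1}{2\kappa}\in[\tfrac12,1)$, and $\delta_K\ge0$, which holds since it is a squared norm.
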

\begin{proof}
  By recursively applying the previous lemma we obtain for $k\ge1$
  \begin{equation*}
    \delta_k \le {\left(1-\frac{1}{2 \kappa}\right)}^k\delta_0 + \kappa^3\sum_{j=0}^{k-1}{\left(1-\frac{1}{2\kappa}\right)}^{k-j-1}\Vert x_{j+1}-x_j \Vert^2.
  \end{equation*}
  Now we sum this inequality from $k=1$ to $K-1$ and add $\delta_0$ on both sides to deduce
  \begin{equation*}
    \sum_{k=0}^{K-1} \delta_k \le 2\kappa\delta_0 + 2\kappa^4 \sum_{k=0}^{K-1}\Vert x_{k+1}-x_k \Vert^2,
  \end{equation*}
  where we used that
  \begin{equation}
    \label{eq:product}
    \sum_{k=1}^{K-1}\sum_{j=0}^{k-1} {\left(1-\frac{1}{2 \kappa}\right)}^{k-1-j} \Vert x_{j+1}-x_j \Vert^2 \le \sum_{j=0}^{K-1} {\left(1-\frac{1}{2 \kappa}\right)}^j \left(\sum_{k=0}^{K-1} \Vert x_{k+1}-x_k \Vert^2\right),
  \end{equation}
  and $\sum_{j=0}^{\infty} {(1-{(2\kappa)}^{-1})}^j= 2\kappa$.
\end{proof}
We can now put the pieces together.

\begin{proof}[Proof of Theorem~\ref{thm:rate-strongly-concave}]
  Summing up the inequality of Lemma~\ref{lem:dist-iter-bounds-optimality} from $k=0$ to $K-1$ and applying Lemma~\ref{lem:bound-sum-delta} we deduce that
  \begin{equation*}
    \begin{aligned}
      \frac12\eta_x \sum_{k=1}^{K} \Vert w_k \Vert^2 &\le \psi(x_0)-\psi(x_{K})  + 2L_{\nabla\Phi}^2\eta_x \kappa\delta_0\\
      &\quad+ \frac12\Big(L_{\nabla\phi}+2L_{\nabla\phi}^2\eta_x - \frac{1}{\eta_x} + 2\kappa^4L_{\nabla\Phi}^2\eta_x \Big)\sum_{k=0}^{K-1} \Vert x_{k+1}-x_k \Vert^2.
    \end{aligned}
  \end{equation*}
  With the step size $\eta_x= 1/(3{(\kappa+1)}^2 L_{\nabla\Phi})$ it follows that
  \begin{equation*}
    \begin{aligned}
      L_{\nabla\phi}+2L_{\nabla\phi}^2\eta_x - \frac{1}{\eta_x} + 2\kappa^4L_{\nabla\Phi}^2\eta_x
      &\le - \frac23 {(\kappa+1)}^2 L_{\nabla\Phi} \le 0,
    \end{aligned}
  \end{equation*}
  which concludes the proof.
\end{proof}

\subsection{Stochastic setting}%
\label{sub:stoch-strongly-convex}

For the purpose of this section Algorithm~\ref{alg:alternatinggda} reads
\begin{equation}
  \label{eq:alt-gda-strongly-stoch}
  (\forall k \ge 0)
  \left\lfloor
    \begin{array}{l}
      x_{k+1} = \prox{\eta_x f}{x_k - \eta_x G_x} \\
      y_{k+1} = \prox{\eta_y h}{y_k + \eta_y G_y},
    \end{array}\right.
\end{equation}
for the minibatch gradient estimators, given by $G_x = \frac{1}{M}\sum_{i=1}^{M}\nabla_x\Phi(x_k,y_k;\xi^i_k)$ and $G_y = \frac{1}{M}\sum_{i=1}^{M}\nabla_y\Phi(x_{k+1},y_k;\zeta^i_k)$.

\begin{theorem}%
  \label{thm:rate-strongly-concave-stoch}
  Let in addition to the assumptions of Theorem~\ref{thm:rate-strongly-concave} also the two properties of the gradient estimator Assumption~\ref{ass:unbiased} and~\ref{ass:bounded-variance} hold true. For algorithm~\eqref{eq:alt-gda-strongly-stoch} with step size $\eta_y=\nicefrac{1}{L_{\nabla\Phi}}$ and $\eta_x=\nicefrac{1}{(4{(1+\kappa)}^2 L_{\nabla\Phi})}$ and batch size $M=\mathcal{O}(\kappa \sigma^2\epsilon^{-2})$ the number of stochastic gradient evaluations $K$ required is
  \begin{equation*}
    \begin{aligned}
      \mathcal{O}\left(\frac{\sigma^2 \kappa^2L_{\nabla\Phi}}{\epsilon^4} \max\{\kappa \Delta^*, L_{\nabla\Phi} \Vert y^*(x_0)-y_0 \Vert^2\}\right),
    \end{aligned}
  \end{equation*}
  such that $\min_{1\le k\le K}\E[\dist\big(-\nabla\phi(x_k), \partial f(x_k)\big)] \le \epsilon$, i.e.\
  to visit an $\epsilon$-stationary point in expectation, where $\Delta^* = \psi(x_0)- \inf_{x\in\R^d} \psi(x)$.
\end{theorem}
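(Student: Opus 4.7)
My plan is to follow the structure of the deterministic proof of Theorem~\ref{thm:rate-strongly-concave} and adapt each of its three ingredients (the stationarity estimate in Lemma~\ref{lem:dist-iter-bounds-optimality}, the one-step contraction for $\delta_k$ in Lemma~\ref{lem:recursion-delta}, and the summation Lemma~\ref{lem:bound-sum-delta}) to the stochastic iteration~\eqref{eq:alt-gda-strongly-stoch}, keeping track of the extra variance contributions introduced by the mini-batch estimators $G_x$ and $G_y$, which satisfy $\Exp{\|G_x-\nabla_x\Phi(x_k,y_k)\|^2}\le\sigma^2/M$ and an analogous bound for $G_y$ by Assumption~\ref{ass:unbiased},~\ref{ass:bounded-variance} combined with the independence of the samples inside each mini-batch.

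For the stochastic counterpart of Lemma~\ref{lem:dist-iter-bounds-optimality}, I define $w_{k+1}=\eta_x^{-1}(x_k-x_{k+1})-G_x+\nabla\phi(x_{k+1})\in(\partial f+\nabla\phi)(x_{k+1})$ using the first-order optimality of the prox step. Expanding $\|w_{k+1}\|^2$ as in the deterministic proof, the cross term is bounded pathwise by the strongly convex optimality of the prox step (for $f$) plus the descent lemma on $\phi$, yielding exactly $\psi(x_k)-\psi(x_{k+1})+\tfrac12(L_{\nabla\phi}-2/\eta_x)\|x_{k+1}-x_k\|^2$. For the quadratic term I split $\nabla\phi(x_{k+1})-G_x$ into $(\nabla\phi(x_{k+1})-\nabla\phi(x_k))+(\nabla\phi(x_k)-\nabla_x\Phi(x_k,y_k))+(\nabla_x\Phi(x_k,y_k)-G_x)$; the first two pieces contribute $L_{\nabla\phi}^2\|x_{k+1}-x_k\|^2$ and $L_{\nabla\Phi}^2\delta_k$ deterministically, while the conditional expectation of the last (which is mean zero given the past) contributes exactly $\sigma^2/M$. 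The end result is the deterministic inequality of Lemma~\ref{lem:dist-iter-bounds-optimality} in expectation plus an extra $\eta_x\sigma^2/M$ term.

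For the stochastic analogue of Lemma~\ref{lem:recursion-delta}, I use the nonexpansiveness of $\prox{\eta_y h}{\cdot}$ to bound $\delta_{k+1}$ by $\|(y_k+\eta_y G_y)-(y_{k+1}^*+\eta_y\nabla_y\Phi(x_{k+1},y_{k+1}^*))\|^2$, add and subtract $\nabla_y\Phi(x_{k+1},y_k)$ inside, and take conditional expectation given the past: the cross term with the noise vanishes by unbiasedness, the deterministic piece is controlled exactly as in the proof of Lemma~\ref{lem:recursion-delta} via Nesterov's co-coercivity~\eqref{eq:nesterov-bound} and gives the factor $q=((\kappa-1)/(\kappa+1))^2$, while the noise square contributes $\eta_y^2\sigma^2/M$. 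Applying the same triangle/Young inequality argument with the Lipschitz bound on $y^*(\cdot)$ from Lemma~\ref{lem:solution-map-is-lipschitz} then yields $\Exp{\delta_{k+1}}\le(1-1/(2\kappa))\Exp{\delta_k}+\kappa^3\Exp{\|x_{k+1}-x_k\|^2}+\eta_y^2\sigma^2/M$. Summing this geometric recursion as in Lemma~\ref{lem:bound-sum-delta} gives $\sum_{k=0}^{K-1}\Exp{\delta_k}\le 2\kappa\delta_0+2\kappa^4\sum_{k=0}^{K-1}\Exp{\|x_{k+1}-x_k\|^2}+2\kappa K\eta_y^2\sigma^2/M$.

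Finally, I telescope the stochastic stationarity inequality from the first step over $k=0,\dots,K-1$, substitute the sum bound from the second step for $\sum\Exp{\delta_k}$, and verify that with the specified $\eta_x=1/(4(1+\kappa)^2L_{\nabla\Phi})$ and $\eta_y=1/L_{\nabla\Phi}$ the coefficient of $\sum\Exp{\|x_{k+1}-x_k\|^2}$ remains nonpositive (the small constant change from $3$ to $4$ in the denominator absorbs the new $2\eta_x L_{\nabla\Phi}^2\kappa^4$ term). Dividing by $K\eta_x/2$ leaves, besides the standard $\Delta^*/(K\eta_x)$ and $L_{\nabla\Phi}^2\kappa\delta_0/K$ terms, the variance residual of order $(1+\kappa\eta_y^2 L_{\nabla\Phi}^2)\sigma^2/M$, which the choice $M=\mathcal{O}(\kappa\sigma^2\epsilon^{-2})$ drives below $\epsilon^2$. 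Multiplying the iteration count by the batch size then produces the claimed total-sample complexity. The main obstacle is the tight bookkeeping in the third paragraph: the $\kappa^4$ factor multiplying $\sum\Exp{\|x_{k+1}-x_k\|^2}$ has to be absorbed into the negative $-1/\eta_x$ term without blowing up the constant in $\eta_x$, which is what forces $\eta_x\lesssim 1/(\kappa^2 L_{\nabla\Phi})$; any looser treatment costs an extra factor of $\kappa$ in the final rate.
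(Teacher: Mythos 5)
Your proposal follows essentially the same route as the paper: the same three-lemma structure (stochastic stationarity bound, noisy one-step contraction for $\delta_k$, geometric summation), the same variance bookkeeping $\sigma^2/M$ from mini-batching, and the same final assembly. The recursion for $\Exp{\delta_{k+1}}$ and its summation match the paper's Lemmas on $\delta_k$ exactly, including the additive $\sigma^2/(ML_{\nabla\Phi}^2)$ term and the factor $2\kappa K$ after summing.

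There is one concrete imprecision in your second paragraph that would make the argument fail as literally written. When you expand $\Exp{\Vert \nabla\phi(x_{k+1})-G_x \Vert^2}$ via the three-way split $\bigl(\nabla\phi(x_{k+1})-\nabla\phi(x_k)\bigr)+\bigl(\nabla\phi(x_k)-\nabla_x\Phi(x_k,y_k)\bigr)+\bigl(\nabla_x\Phi(x_k,y_k)-G_x\bigr)$, the cross term $2\Exp{\langle \nabla\phi(x_{k+1})-\nabla\phi(x_k),\, \nabla_x\Phi(x_k,y_k)-G_x\rangle}$ does \emph{not} vanish under conditioning on the past, because $x_{k+1}=\prox{\eta_x f}{x_k-\eta_x G_x}$ is itself a function of the noise $G_x$. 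Only the cross term between the second and third pieces is mean zero. The paper handles the surviving cross term with Young's inequality, which is why its quadratic coefficient is $3L_{\nabla\phi}^2\eta_x$ (not $2L_{\nabla\phi}^2\eta_x$ as in the deterministic case) and why the variance contribution is $2\sigma^2/M$ rather than $\sigma^2/M$. This is easily repaired and only shifts constants --- absorbed by the slightly smaller stepsize, exactly as you anticipate --- but note that the $2\kappa^4 L_{\nabla\Phi}^2\eta_x$ term you cite as the reason for shrinking $\eta_x$ from $1/(3(1+\kappa)^2L_{\nabla\Phi})$ to $1/(4(1+\kappa)^2L_{\nabla\Phi})$ is already present in the deterministic proof; the genuinely new term to absorb is the extra $L_{\nabla\phi}^2\eta_x$ coming from this non-vanishing cross term. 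With that correction your argument closes and reproduces the claimed complexity.
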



\begin{lemma}%
  \label{lem:dist-iter-bounds-optimality-stoch}
  There exists a sequence ${(w_k)}_{k\ge1}$ such that $w_k \in (\partial f+\nabla\phi)(x_k)$ and its norm can be bounded for all $k\ge0$ by
  \begin{equation*}
    \begin{aligned}
      \frac12 \eta_x \Exp{\Vert w_{k+1} \Vert^2} &\le \Exp{\psi(x_k)-\psi(x_{k+1})}+ \eta_x L_{\nabla\Phi}^2 \Exp{\delta_k} + \eta_x \frac{\sigma^2}{M} \\
      &\quad+ \frac12\Big(L_{\nabla\phi} + 3L_{\nabla\phi}^2\eta_x - \frac{1}{\eta_x} \Big) \Exp{\Vert x_k-x_{k+1} \Vert^2}.
    \end{aligned}
  \end{equation*}
\end{lemma}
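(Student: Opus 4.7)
The plan is to follow the deterministic proof of Lemma~\ref{lem:dist-iter-bounds-optimality} line by line, with $G_x$ in place of $\nabla_x\Phi(x_k,y_k)$, and then carefully treat the extra stochastic error it introduces.

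First, I set
\begin{equation*}
  w_{k+1} := \frac{1}{\eta_x}(x_k - x_{k+1}) + \nabla\phi(x_{k+1}) - G_x.
\end{equation*}
The optimality condition for the proximal step $x_{k+1} = \prox{\eta_x f}{x_k - \eta_x G_x}$ yields $w_{k+1} \in \partial f(x_{k+1}) + \nabla\phi(x_{k+1})$, exactly as in the deterministic case. Expanding $\Vert w_{k+1}\Vert^2$ as in~\eqref{eq:estimate-wk} and bounding the inner-product term by combining the descent lemma for the $L_{\nabla\phi}$-smooth function $\phi$ (Proposition~\ref{thm:extended-danskin-2}) with the $\frac{1}{\eta_x}$-strong convexity of the proximal model that $x_{k+1}$ minimizes reproduces the bound
\begin{equation*}
  \langle \nabla\phi(x_{k+1}) - G_x, x_k - x_{k+1}\rangle \le \psi(x_k) - \psi(x_{k+1}) + \tfrac12\bigl(L_{\nabla\phi} - \tfrac{2}{\eta_x}\bigr)\Vert x_{k+1}-x_k\Vert^2,
\end{equation*}
since that step of the deterministic argument never used that $G_x$ was the true gradient.

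Second, I bound the residual $\Vert \nabla\phi(x_{k+1}) - G_x\Vert^2$ via the three-term split
\begin{equation*}
  \nabla\phi(x_{k+1}) - G_x = A + B + C
\end{equation*}
with $A := \nabla\phi(x_{k+1})-\nabla\phi(x_k)$, $B := \nabla\phi(x_k)-\nabla_x\Phi(x_k,y_k)$ and $C := \nabla_x\Phi(x_k,y_k) - G_x$. Smoothness of $\nabla\phi$ gives $\Vert A\Vert\le L_{\nabla\phi}\Vert x_{k+1}-x_k\Vert$; Danskin's identity $\nabla\phi(x_k) = \nabla_x\Phi(x_k,y_k^*)$ from Proposition~\ref{thm:extended-danskin-2} combined with the joint $L_{\nabla\Phi}$-Lipschitz continuity of $\nabla_x\Phi$ in $y$ gives $\Vert B\Vert \le L_{\nabla\Phi}\sqrt{\delta_k}$. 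The key refinement is to \emph{not} use the naive Young's $\Vert A+B+C\Vert^2 \le 3(\Vert A\Vert^2+\Vert B\Vert^2+\Vert C\Vert^2)$, but instead $\Vert A+(B+C)\Vert^2 \le 2\Vert A\Vert^2 + 2\Vert B+C\Vert^2$, and then exploit $\Expcond{C}{x_k,y_k}=0$ (Assumption~\ref{ass:unbiased}) together with the $(x_k,y_k)$-measurability of $B$ to kill the cross term $\langle B,C\rangle$, yielding $\Exp{\Vert B+C\Vert^2} = \Exp{\Vert B\Vert^2} + \Exp{\Vert C\Vert^2} \le L_{\nabla\Phi}^2\Exp{\delta_k} + \sigma^2/M$, where the mini-batch variance bound $\Exp{\Vert C\Vert^2}\le \sigma^2/M$ is the standard consequence of Assumption~\ref{ass:bounded-variance} applied to the $M$ i.i.d.\ samples $\xi_k^1,\dots,\xi_k^M$.

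Combining everything, multiplying by $\eta_x/2$, and collecting coefficients produces the claimed inequality (in fact with the slightly tighter coefficient $\eta_x L_{\nabla\phi}^2$ in place of $\tfrac{3}{2}\eta_x L_{\nabla\phi}^2$ on $\Vert x_k-x_{k+1}\Vert^2$, which the statement's factor $3$ absorbs). The hard part will be purely bookkeeping: using the naive three-term Young's would inflate the $\delta_k$- and $\sigma^2/M$-coefficients to $\tfrac{3}{2}\eta_x L_{\nabla\Phi}^2$ and $\tfrac{3}{2}\eta_x$, so the asymmetric two-term Young's together with the unbiasedness cross-term cancellation is what matches the stated coefficients $\eta_x L_{\nabla\Phi}^2$ and $\eta_x$.
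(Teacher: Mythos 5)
Your proposal is correct and follows essentially the same route as the paper: the same $w_{k+1}$ from the proximal optimality condition, the same inner-product bound combining the descent lemma for $\phi$ with the strong convexity of the prox model, and the same three-term decomposition of $\nabla\phi(x_{k+1})-G_x$ with unbiasedness killing the $\langle B,C\rangle$ cross term. The only (harmless) deviation is that you group $\Vert A+(B+C)\Vert^2\le 2\Vert A\Vert^2+2\Vert B+C\Vert^2$ where the paper expands fully and Young-bounds $2\langle A,C\rangle$, which is why you obtain the marginally tighter coefficient $2L_{\nabla\phi}^2$ instead of the paper's $3L_{\nabla\phi}^2$ on $\Exp{\Vert x_{k+1}-x_k\Vert^2}$; this still implies the stated inequality.
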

\begin{proof}
  Let $k\ge0$.
  From the proximal operator we deduce that
  \begin{equation*}
    0 \in \partial f(x_{k+1}) + G_x + \frac{1}{\eta_x}(x_{k+1}-x_k).
  \end{equation*}
  Thus, we define $w_{k+1}$ such that we immediately obtain the desired inclusion
  \begin{equation*}
    \begin{aligned}
      w_{k+1}:= \frac{1}{\eta_x}(x_k-x_{k+1}) + \nabla\phi(x_{k+1}) - G_x \in \partial f(x_{k+1})+\nabla\phi(x_{k+1}).
    \end{aligned}
  \end{equation*}
  In order to bound $w_{k+1}$ we consider
  \begin{equation}
    \label{eq:estimate-wk-stoch}
    \begin{aligned}
      \MoveEqLeft[1] \Vert w_{k+1} \Vert^2 \\
      &= \frac{1}{\eta_x^2} \Vert x_k-x_{k+1} \Vert^2 + \frac{2}{\eta_x} \langle x_k-x_{k+1}, \nabla\phi(x_{k+1})-G_x \rangle + \Vert \nabla\phi(x_{k+1})-G_x \Vert^2.
    \end{aligned}
  \end{equation}
  Using the analogous statement to~\eqref{eq:descent-grad-kplus1}
  \begin{equation*}
    \begin{aligned}
      \MoveEqLeft \Exp{\Vert \nabla\phi(x_{k+1}) - G_x \Vert^2} =\Exp{ \Vert \nabla\phi(x_{k+1})-\nabla_x\Phi(x_k,y_k)+\nabla_x\Phi(x_k,y_k)-G_x \Vert^2} \\
      &= \Exp{\Vert \nabla\phi(x_{k+1})-\nabla\phi(x_k)+\nabla\phi(x_k)-\nabla_x\Phi(x_k,y_k) \Vert^2} \\
      &\quad+2 \Exp{\langle \nabla\phi(x_{k+1})-\nabla\phi(x_k), \nabla_x\Phi(x_k,y_k) - G_x\rangle} + \Exp{\Vert \nabla_x\Phi(x_k,y_k)-G_x \Vert^2}\\
      &\quad+ 2\underbrace{\Exp{\langle \nabla\phi(x_k)-\nabla_x\Phi(x_k,y_k),\nabla_x\Phi(x_k,y_k)-G_x\rangle}}_{=0} \\
      &\le3L_{\nabla\phi}^2\Exp{\Vert x_{k+1}-x_k \Vert^2} + 2L_{\nabla\Phi}^2\Exp{\delta_k} + 2 \frac{\sigma^2}{M}
    \end{aligned}
  \end{equation*}
  in~\eqref{eq:estimate-wk-stoch} yields the desired statement.
\end{proof}

In the next lemma it remains to bound $\delta_k$.

\begin{lemma}%
  \label{lem:recursion-delta-stoch}
  We have that for all $k\ge0$
  \begin{equation*}
    \Exp{\delta_{k+1}} \le {\left(1-\frac{1}{2\kappa}\right)} \Exp{\delta_k} + \kappa^3 \Vert x_{k+1}-x_k \Vert^2 + \frac{\sigma^2}{M L_{\nabla\Phi}^2}.
  \end{equation*}
\end{lemma}
\begin{proof}
  Let $k\ge0$ be fixed.
  We first consider the case where $\Phi$ is strongly concave in its second component from the definition of $y_{k+1}$ (see~\eqref{eq:alt-gda-strongly-stoch}) we deduce that
  \begin{equation*}
    \label{eq:strong-concavity-contraction-stoch}
    \begin{aligned}
      \delta_{k+1} &= \Vert \prox{\eta_y h}{y^*_{k+1}+\eta_y \nabla_y\Phi(x_{k+1},y_{k+1}^*)}- \prox{\eta_y h}{y_k + \eta_y G_y} \Vert^2 \\
      &\le \Vert y^*_{k+1}+\eta_y \nabla_y\Phi(x_{k+1},y_{k+1}^*) - y_k - \eta_y G_y \Vert^2 \\
    \end{aligned}
  \end{equation*}
  and
  \begin{equation}
    \label{eq:stoch-gradient-est}
    \begin{aligned}
      \MoveEqLeft \Vert y^*_{k+1}+\eta_y \nabla_y\Phi(x_{k+1},y_{k+1}^*) - y_k - \eta_y G_y \Vert^2 \\
      &= \Vert y^*_{k+1} - y_k \Vert^2 + 2\eta_y \langle y_{k+1}^*-y_k, \nabla_y\Phi(x_{k+1},y_{k+1}^*)-\nabla_y\Phi(x_{k+1},y_k) \rangle\\
      &\quad+2\eta_y \underbrace{\langle y_{k+1}^*-y_k, \nabla_y\Phi(x_{k+1},y_k)-G_y\rangle}_{(\square)}\\
      &\quad+\eta_y^2 \Vert \nabla_y\Phi(x_{k+1},y_{k+1}^*)-\nabla_y\Phi(x_{k+1},y_k) \Vert^2 +\eta_y^2 \Vert \nabla_y\Phi(x_{k+1},y_k)- G_y \Vert^2\\
      &\quad+ \eta_y^2 \underbrace{\langle \nabla_y\Phi(x_{k+1},y_{k+1}^*)-\nabla_y\Phi(x_{k+1},y_k), \nabla_y\Phi(x_{k+1},y_k)-G_y \rangle}_{(*)}.
    \end{aligned}
  \end{equation}
  Some of the terms vanish after taking the expectation such as
  \begin{equation*}
    \Exp{(*)} = \Exp{\Expcond{(*)}{y_k,x_{k+1}}} = \Exp{0}=0=  \Exp{\Expcond{(\square)}{y_k,x_{k+1}}} =\Exp{(\square)}.
  \end{equation*}
  Using furthermore~\cite[Theorem 2.1.11]{nesterov-introductory}
  which states that
  \begin{equation}
    \label{eq:strong-cocoercive}
    \begin{aligned}
      \MoveEqLeft[1] \langle \nabla_y\Phi(x_{k+1},y_k)-\nabla_y\Phi(x_{k+1},y_{k+1}^*),y_{k+1}^*-y_k \rangle \\
      &\ge \frac{\mu L_{\nabla\Phi}}{\mu+L_{\nabla\Phi}}\Vert y_{k+1}^*-y_k \Vert^2 + \frac{1}{\mu+L_{\nabla\Phi}} \Vert \nabla_y\Phi(x_{k+1},y_{k+1}^*)-\nabla_y\Phi(x_{k+1},y_k) \Vert^2
  \end{aligned}
  \end{equation}
  results in
  \begin{equation}
    \label{eq:delta-almost-recursion}
    \Exp{\delta_{k+1}} \le \left(\frac{\kappa-1}{\kappa+1}\right) \Exp{\Vert y_{k+1}^*-y_k \Vert^2} + \frac{\sigma^2}{ML_{\nabla\Phi}^2}\le  q\Exp{\Vert y_{k+1}^*-y_k \Vert^2} + \frac{\sigma^2}{ML_{\nabla\Phi}^2},
  \end{equation}
  with $q= {\left(\frac{\kappa}{\kappa+1}\right)}^2$, where we used that $\eta_y=\nicefrac{1}{L_{\nabla\Phi}}$.
  If \(h\) is strongly concave then we use the fact that the proximal operator is a contraction, see~\cite[Proposition 23.11]{bc}, to deduce that
  \begin{equation*}
    \begin{aligned}
      \E\delta_{k+1} 
      &= \Exp{\Vert \prox{\eta_y h}{y^*_{k+1}+\eta_y \nabla_y\Phi(x_{k+1},y_{k+1}^*)}- \prox{\eta_y h}{y_k + \eta_y G_y} \Vert^2} \\
      &= q \Exp{\Vert y^*_{k+1}- y_k + \eta_y \nabla_y\Phi(x_{k+1},y_{k+1}^*) - \eta_y G_y \Vert^2} \\
      &\overset{\eqref{eq:stoch-gradient-est}}{=} q\E\Big[\Vert y^*_{k+1} - y_k \Vert^2 +
      2q\eta_y \langle y_{k+1}^*-y_k, \nabla_y\Phi(x_{k+1},y_{k+1}^*)-\nabla_y\Phi(x_{k+1},y_k)\rangle \\
      &\quad+q\eta_y^2 \Vert \nabla_y\Phi(x_{k+1},y_{k+1}^*)-\nabla_y\Phi(x_{k+1},y_k) \Vert^2 + q\eta_y^2\Vert \nabla_y\Phi(x_{k+1},y_k)- G_y \Vert^2\Big].\\
    \end{aligned}
  \end{equation*}
  Using now~\eqref{eq:strong-cocoercive} with $\mu=0$, i.e.\ the cocoercivity of the gradient, we deduce that
  \begin{equation*}
    \Exp{\delta_{k+1}} = \Exp{\Vert y^*_{k+1}- y_{k+1} \Vert^2} \le q \Vert y^*_{k+1}- y_k \Vert^2 + q\frac{\sigma^2}{ML_{\nabla\Phi}^2},
  \end{equation*}
  meaning that we concluded~\eqref{eq:delta-almost-recursion} in both cases.
  Next, using~\eqref{eq:delta-almost-recursion} and the considerations made in~\eqref{eq:estimate-delta1} we deduce that
  \begin{align*}
    \Exp{\delta_{k+1}} \le \left(1 - \frac{1}{2\kappa}\right) \Exp{\delta_k} + \kappa \Exp{\Vert y^*_{k+1}-y^*_k \Vert^2} + \frac{\sigma^2}{ML_{\nabla\Phi}^2}.
  \end{align*}
  Again, due to the $\kappa$-Lipschitz continuity of $y^*(\cdot)$ we have that $\Vert y^*_{k+1} - y^*_k \Vert \le \kappa \Vert x_{k+1}-x_k \Vert$, which finishes the proof.
\end{proof}

Now we can bound the sum of $\delta_k$.

\begin{lemma}%
  \label{lem:bound-sum-delta-stoch}
  We have that, for all $K\ge1$
  \begin{equation*}
    \sum_{k=0}^{K-1} \Exp{\delta_k} \le 2\kappa\delta_0 + 2\kappa^4 \sum_{k=0}^{K-1}\Exp{\Vert x_{k+1}-x_k \Vert^2} + 2K \frac{\kappa\sigma^2}{ML_{\nabla\Phi}^2}.
  \end{equation*}
\end{lemma}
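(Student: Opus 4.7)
The proof plan follows the same pattern as the deterministic analogue (Lemma~\ref{lem:bound-sum-delta}), with the extra stochastic noise term $\sigma^2/(ML_{\nabla\Phi}^2)$ appearing uniformly at each step of the recursion and contributing an additional geometric series.

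First I would iteratively unroll the one-step recursion from Lemma~\ref{lem:recursion-delta} (stochastic version). Setting $q := 1 - \tfrac{1}{2\kappa}$, induction on $k$ gives
\begin{equation*}
\Exp{\delta_k} \le q^k \delta_0 + \kappa^3 \sum_{j=0}^{k-1} q^{k-1-j} \Exp{\Vert x_{j+1}-x_j \Vert^2} + \frac{\sigma^2}{M L_{\nabla\Phi}^2} \sum_{j=0}^{k-1} q^{k-1-j},
\end{equation*}
which only differs from the deterministic case by the final summand. Note that the noise contribution telescopes trivially through linearity because the additive term $\sigma^2/(ML_{\nabla\Phi}^2)$ is deterministic and constant across iterations.

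Next I would sum this inequality from $k=1$ to $K-1$ and add $\delta_0$ to account for the $k=0$ term. For the deterministic part I would reuse the swap-of-summation argument of~\eqref{eq:product}, bounding
\begin{equation*}
\sum_{k=1}^{K-1} \sum_{j=0}^{k-1} q^{k-1-j} \Exp{\Vert x_{j+1}-x_j \Vert^2} \le \left(\sum_{j=0}^{\infty} q^j\right) \sum_{k=0}^{K-1} \Exp{\Vert x_{k+1}-x_k \Vert^2} = 2\kappa \sum_{k=0}^{K-1}\Exp{\Vert x_{k+1}-x_k \Vert^2},
\end{equation*}
and similarly $\sum_{k=0}^{K-1} q^k \delta_0 \le 2\kappa \delta_0$. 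For the new noise part, one has
\begin{equation*}
\sum_{k=1}^{K-1} \sum_{j=0}^{k-1} q^{k-1-j} \le \sum_{k=1}^{K-1} \frac{1}{1-q} \le K \cdot 2\kappa,
\end{equation*}
which delivers exactly the claimed $2K \kappa \sigma^2/(ML_{\nabla\Phi}^2)$ term.

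There is essentially no obstacle here; the statement is a routine adaptation of Lemma~\ref{lem:bound-sum-delta} once the stochastic recursion has been established in Lemma~\ref{lem:recursion-delta}. The only mild care needed is to ensure that the geometric factor $\sum_{j\ge0} q^j = 2\kappa$ is applied consistently to both the iterate-difference term and the noise term, so that the constants match those displayed in the lemma.
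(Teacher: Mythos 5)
Your proposal is correct and follows essentially the same route as the paper: unroll the recursion from Lemma~\ref{lem:recursion-delta}, sum over $k$, apply the summation-swap bound of~\eqref{eq:product} with $\sum_{j\ge0}(1-\tfrac{1}{2\kappa})^j=2\kappa$ to both the iterate-difference and noise terms, and add $\delta_0$ for the $k=0$ term. The constants work out exactly as you describe, so nothing further is needed.
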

\begin{proof}
  By recursively applying the previous lemma we obtain for $k\ge1$
  \begin{equation*}
    \Exp{\delta_k} \le {\left(1-\frac{1}{2\kappa}\right)}^k\delta_0 + \sum_{j=0}^{k-1}{\left(1-\frac{1}{2 \kappa}\right)}^{k-j-1}\left(\kappa^3\Exp{\Vert x_{j+1}-x_j \Vert^2} + \frac{\sigma^2}{M L_{\nabla\Phi}^2}\right).
  \end{equation*}
  Now we sum this inequality from $k=1$ to $K-1$ and add $\delta_0$ on both sides to deduce
  \begin{equation*}
    \sum_{k=0}^{K-1} \Exp{\delta_k} \le 2\kappa\delta_0 + 2K\frac{\kappa\sigma^2}{ML_{\nabla\Phi}^2} + 2\kappa^4 \sum_{k=0}^{K-1}\Exp{\Vert x_{k+1}-x_k \Vert^2}
  \end{equation*}
  using the considerations made in~\eqref{eq:product}.
\end{proof}

We can now put the pieces together.

\begin{proof}[Proof of Theorem~\ref{thm:rate-strongly-concave-stoch}]
  We sum up the inequality of Lemma~\ref{lem:dist-iter-bounds-optimality-stoch} from $k=0$ to $K-1$ and applying Lemma~\ref{lem:bound-sum-delta-stoch} we deduce that
  \begin{equation*}
    \begin{aligned}
       \eta_x \sum_{k=1}^{K} \Exp{\Vert w_k \Vert^2} &\le 2\Exp{\psi(x_0)-\psi(x_{K})} + 4\eta_x \kappa L_{\nabla\Phi}^2\delta_0 + 4\eta_x \kappa K\frac{\sigma^2}{M} + 2 \eta_x K\frac{\sigma^2}{M}\\
      &\quad+\underbrace{\Big(L_{\nabla\phi}+3L_{\nabla\phi}^2\eta_x - \frac{1}{\eta_x} + 2\kappa^4L_{\nabla\Phi}^2\eta_x \Big)}_{=(*)}\sum_{k=0}^{K-1} \Exp{\Vert x_{k+1}-x_k \Vert^2} .
    \end{aligned}
  \end{equation*}
  Applying the step size $\eta_x= \nicefrac{1}{(3 {(1+\kappa)}^2 L_{\nabla\Phi})}$ it follows that
  \begin{equation*}
    (*)\le 2(\kappa+1)L_{\nabla\Phi} - 3{(\kappa+1)}^2L_{\nabla\Phi} + \frac{2\kappa^2L_{\nabla\Phi}}{3} \le -\frac13 {(\kappa+1)}^2L_{\nabla\Phi} \le 0
  \end{equation*}
  which concludes the proof.
\end{proof}

\subsection{Alternating vs simultaneous}%
\label{sub:altvssim-strongly}

Similarly to Section~\ref{sub:altvssim-non-strongly} we want to highlight here the difference in the analysis between the two versions of GDA.\ Again, in this nonconvex-strongly-concave setting the task is to estimate $\delta_k := \Vert y_k^*-y_k \Vert^2$. While in the simultaneous version~\cite{lin2020gradient} obtains for all $k\ge0$ the following inequality
\begin{equation*}
  \begin{aligned}
    \delta_{k+1} &\le (1+\epsilon) \Vert y_{k}^* -y_{k+1} \Vert^2 + (1+\epsilon^{-1})\Vert y_{k+1}^*-y_k^* \Vert^2 \\
        &\le q(1+\epsilon) \delta_{k} + (1+\epsilon^{-1})\Vert y_{k+1}^*-y_k^* \Vert^2,
  \end{aligned}
\end{equation*}
where $q$ is the contraction constant derived from the gradient ascent step and is roughly $1-\frac{1}{\kappa}$. In the alternating version however, we estimate for all $k\ge0$
\begin{equation*}
  \begin{aligned}
    \delta_{k+1} \le q  \Vert y_{k+1}^* -y_{k} \Vert^2 \le q(1+\epsilon) \delta_{k} + q(1+\epsilon^{-1})\Vert y_{k+1}^*-y_k^* \Vert^2.
  \end{aligned}
\end{equation*}
Evidently, in the alternating version the contraction property is applied before the triangle inequality, which leads to the second term being multiplied by $q<1$ as well, influencing the final complexity bound favorably, albeit only slightly.

\section{Numerical Experiments}%
\label{sec:experiments}

In this section, we present several experiments outlining the empirical benefits of alternating GDA over its simultaneous counterpart.

\subsection{Toy example}%

The recent paper~\cite{zhang2022near} showed an improved convergence rate of alternating GDA over the simultaneous version in the strongly convex-strongly concave quadratic setting from $\mathcal{O}(\kappa^{2})$ to $\mathcal{O}(\kappa)$. Inspired by these results we study a nonconvex-strongly-concave toy example
\begin{equation}
  \label{eq:toy-example}
  \min_{x\in \R} \max_{y \in \R}\, -\frac{1}{4}x^2 + xy - \frac{1}{2}y^2,
\end{equation}
 where the resulting max function happens to be a strongly convex quadratic
 \begin{equation}
   \label{eq:toy-max-function}
   \max_{y \in \R}\, \left\{-\frac{1}{4}x^2 + xy - \frac{1}{2} y^2\right\} = \frac{1}{4}x^2.
 \end{equation}
 As we can see from Figure~\ref{fig:toy-performance}, alternating GDA outperforms not only its simultaneous counterpart but also the extra-gradient method (EG)~\cite{extragradient} and the multistep method GDmax~\cite{jin2020local} (employing $10$ ascent steps per descent step). For the Minimax-PPA method~\cite{lin2020near} we only counted iterations but did not account for the computational cost of the double inner procedure which required over 100 solves of a proximally regularized subproblem per iteration. We suspect that for a significantly more ill-conditioned problem the Minimax-PPA methods might have performed more competitively.

\begin{figure}[ht]
  \centering
  \begin{subfigure}[b]{0.45\linewidth}
    \centering
    \includegraphics[width=\linewidth]{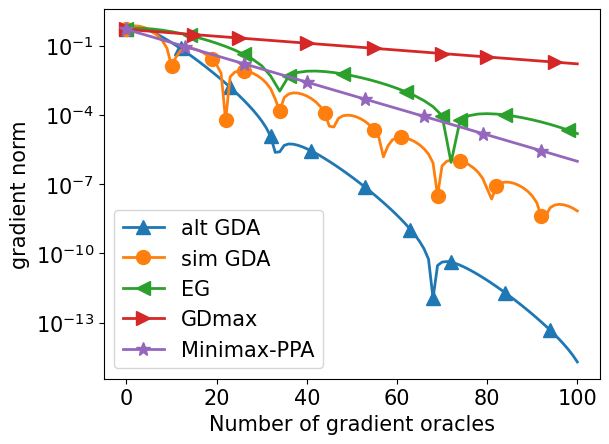}
    \caption{Gradient norm of the \emph{max function}.}
  \end{subfigure}
  \begin{subfigure}[b]{0.45\linewidth}
    \centering
    \includegraphics[width=\linewidth]{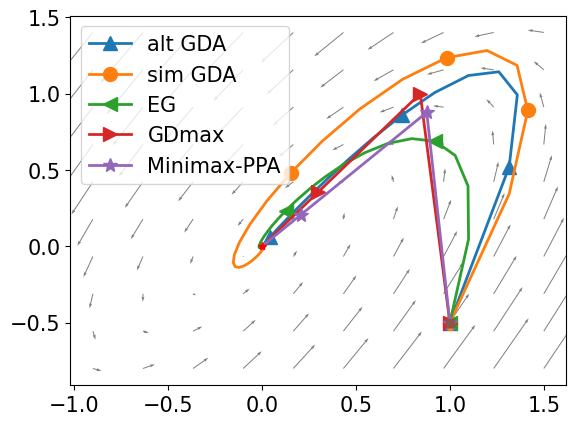}
    \caption{Trajectory of the iterates.}
  \end{subfigure}
  \caption{Comparison of different methods on the $2$-d toy problem~\eqref{eq:toy-example}. The starting point is given by $(x_0, y_0)=(1,-0.5)$, step sizes for the first 4 methods are given by $\eta_x=1/(\kappa L_{\nabla\Phi})$, $\eta_y=1/L_{\nabla\Phi}$ where $\mu=1$ and $L_{\nabla\Phi}\approx 1.78$ and the unique stationary point of the \emph{max function} is $x^*=0$ as evident from~\eqref{eq:toy-max-function}.
EG denotes the extra-gradient method~\cite{extragradient}, GDmax a simple multistep method~\cite{jin2020local} employing $10$ ascent steps for each descent step and Minimax-PPA is the method from~\cite{lin2020near} for which all parameters chosen according to theory.
  }%
  \label{fig:toy-performance}
\end{figure}

In order to account for differences in step sizes we do a grid search across possible step sizes for the two components and plot the number of iteration required to reach a target accuracy, see Figure~\ref{fig:heatmap}. Since the Minimax-PPA method has many inner step sizes and number of inner loop calls to tune it cannot easily be compared here, so we excluded it. We can see that alternating GDA is not only convergent for many different combinations of step sizes but also consistently outperforms the other methods in terms of the required gradient oracle calls.

\begin{figure}[h!]
  \centering
  \begin{subfigure}[b]{0.40\linewidth}
    \centering
    \includegraphics[width=\linewidth]{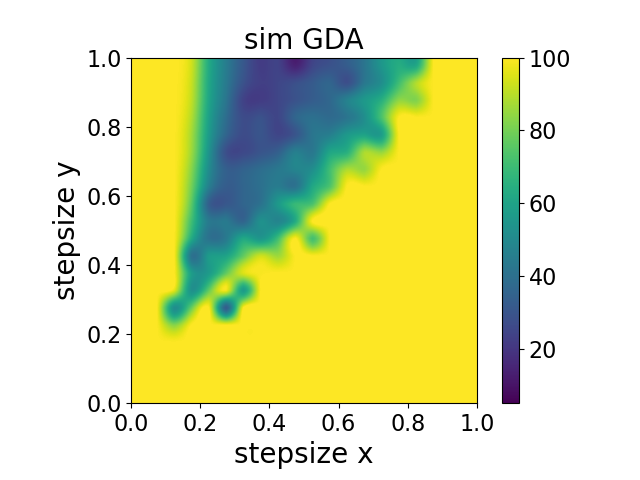}
  \end{subfigure}
  \begin{subfigure}[b]{0.40\linewidth}
    \centering
    \includegraphics[width=\linewidth]{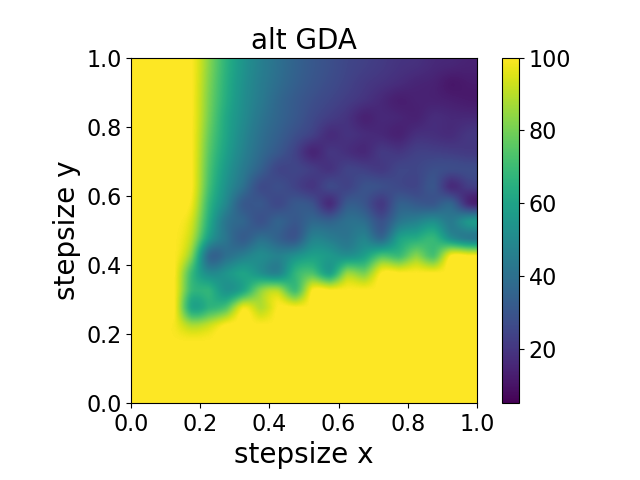}
  \end{subfigure}
  \begin{subfigure}[b]{0.40\linewidth}
    \centering
    \includegraphics[width=\linewidth]{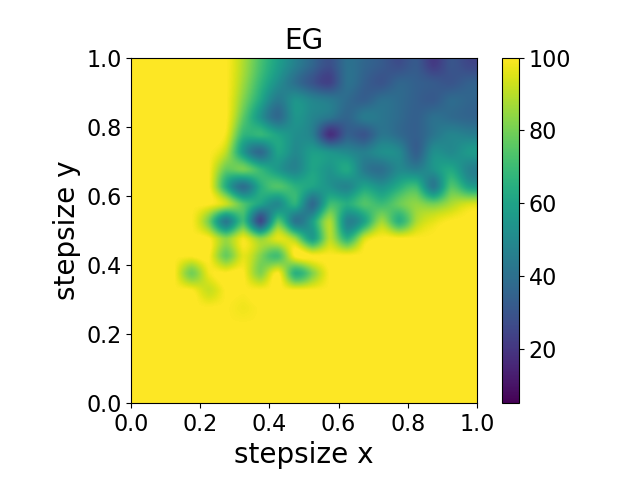}
  \end{subfigure}
  \begin{subfigure}[b]{0.40\linewidth}
    \centering
    \includegraphics[width=\linewidth]{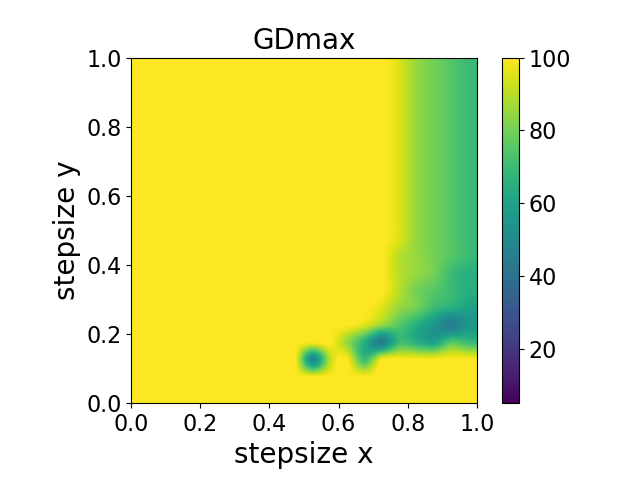}
  \end{subfigure}
  \caption{Gradient oracle calls needed to achieve an iterate gradient norm of the \emph{max function}, see~\eqref{eq:max-function}, smaller than $10^{-4}$ using different methods. }%
  \label{fig:heatmap}
\end{figure}

 We also want to point out that, it might appear from the convergence behavior of the different methods that our toy example~\eqref{eq:toy-example} is easier than the more classical bilinear problem of $\min_x \max_y \, xy$, see~\cite{gidel2019negative,gidel2019variational,numerics-of-gans}, where neither version of GDA converges. While for the latter problem the vector field is always perpendicular to the direction of solution, our new problem exhibits areas where the vector field points away from the solution (see the upper right corner of the Figure~\ref{fig:toy-performance} (b)) and thus exhibits a novel (and challenging behavior) which cannot be captured by bilinear problems.

\subsection{Adversarially robust learning}%

We now highlight the performance of alternating GDA for adversarial learning on the MNIST~\cite{mnist}, Fashion MNIST~\cite{fashion-mnist} and CIFAR10~\cite{cifar10} datasets respectively. We focus on the adversarial learning formulation described in~\eqref{eq:duchi-penalty}, which originated in~\cite{duchi-certifiable} and results in a nonconvex-strongly-concave minimax formulation for large enough $\gamma$.
We use standard convolutional networks (CNN) for all three datasets. For MNIST and Fashion-MNIST we use the architecture proposed by~\cite{duchi-certifiable} of three convolutional layers followed by a dense layer and softmax output. For CIFAR10 we follow the default architecture in the tutorial of~\cite{cleverhans} with seven convolutional layers where the third, fifth and seventh are followed by an average pooling operation.

\begin{figure}[ht]
  \centering
  \begin{subfigure}[b]{0.32\linewidth}
    \centering
    \includegraphics[width=\linewidth]{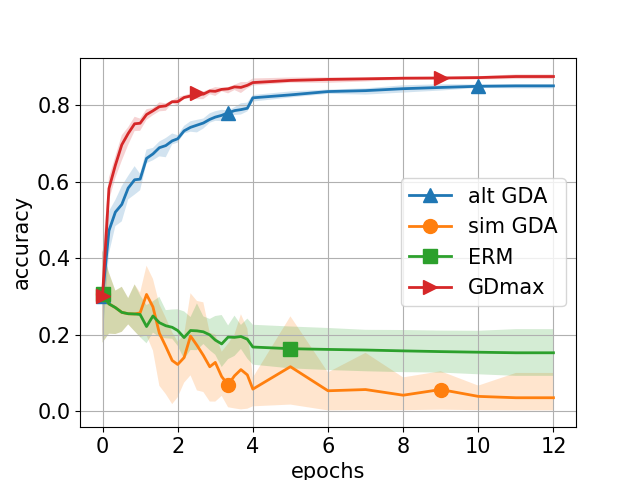}
    \caption{MNIST}
  \end{subfigure}
  \begin{subfigure}[b]{0.33\linewidth}
    \centering
    \includegraphics[width=\linewidth]{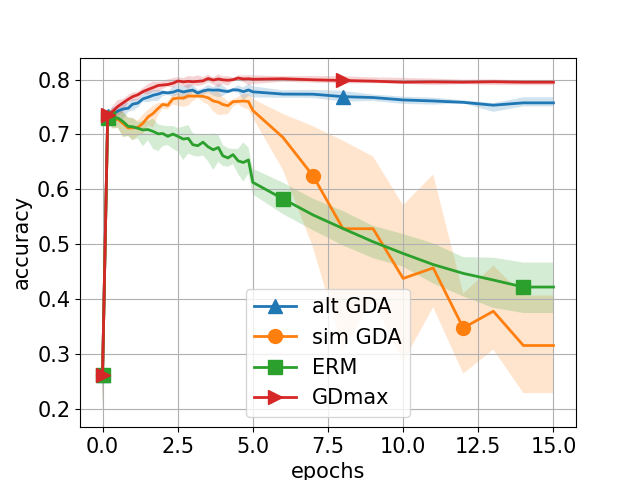}
    \caption{Fashion-MNIST}
  \end{subfigure}
  \begin{subfigure}[b]{0.32\linewidth}
    \centering
    \includegraphics[width=\linewidth]{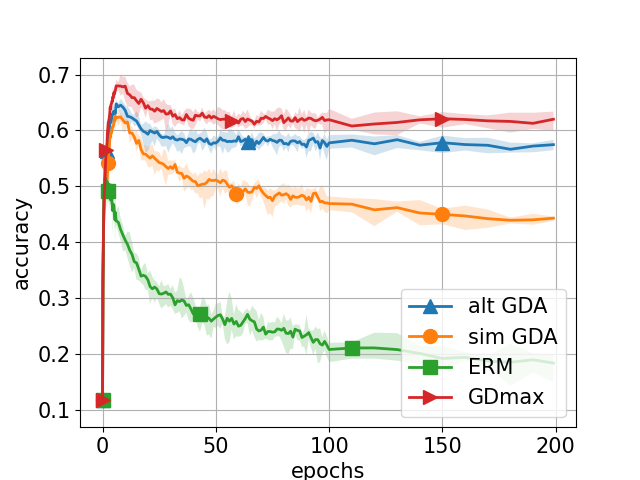}
    \caption{CIFAR10}
  \end{subfigure}
  \caption{Test accuracy (percentage of correctly classified) of different adversarial training methods on multiple datasets averaged over 5 random seeds. ERM denotes empirical risk minimization and represents a standard network trained purely for classification without any consideration of the adversarial examples. For the minimization step we actually used the ADAM optimizer with a learning rate of $0.001$ as done in the code of~\cite{duchi-certifiable,cleverhans}. For the Fashion-MNIST and CIFAR10 datasets we set the Lagrange parameter $\gamma$ to be $1/\gamma=0.4$, whereas for MNIST we use $1/\gamma=1.3$.}%
  \label{fig:accuracy}
\end{figure}

Since the robust training did not significantly impact the performance of the CNNs on the clean test examples on any of the data sets, we only report the performance on the adversarial examples.

In contrast to multistep methods, as proposed in~\cite{duchi-certifiable,jin2020local,nouiehed2019solving} which aim to (approximately) solve the inner maximization problem and typically start in every iteration from either the ``clean'' training example (or a randomly perturbed point~\cite{random-perturbed-start}), GDA can be interpreted as a warm starting procedure which stores the adversarial example computed in the previous epoch. This also emphasizes the advantage of alternating GDA (and why it can be considered the more natural approach) as this method uses computed adversarial examples right away whereas the simultaneous version stores them to only use them in the next epoch.

Although this is not the main focus this work, we also contrast the single step methods with a method approximately solving the maximization problem (GDmax, see~\cite{lin2020gradient}) and observe that while the multistep method slightly outperforms alternating GDA, see Figure~\ref{fig:accuracy}, this comes at the cost of an $\approx8$ times higher computation time (based on $\approx15$ gradient ascent steps for the multistep method).

Nevertheless, all results clearly show that alternating GDA consistently outperforms simultaneous GDA without any additional computational cost.

\section{Conclusion}%

We show novel complexity results for the alternating gradient descent ascent method for nonconvex-(strongly) concave minimax problems using stochastic or deterministic gradient evaluations. Since these bounds are only a first step into the analysis of alternating GDA in this sophisticated setting, they do not explain theoretically the benefit of the alternating version. 
However, we provide empirical evidence that this method is favorable.


\end{document}